\DeclareMathOperator{\dSMZ}{-SMZ}
\DeclareMathOperator{\dSDZ}{-SDZ}
\DeclareMathOperator{\dMZ}{-MZ}
\DeclareMathOperator{\dDZ}{-DZ}
\NewDocumentCommand{\indep}{ O{0.45cm}O{0.1cm} }
{
  \mathrel{%
    \hbox to#1{\xleaders\hbox to#2{\hss \raisebox{0.175ex}{\resizebox{!}{0.5\height}{\resizebox{0.5\width}{1.4\height}{=}}}\hss}\hfill}\text{\guilsinglright}
  }%
}
\newtheorem{theorem}{Theorem}
\newtheorem{observation}[theorem]{Observation}
\newtheorem{lemma}[theorem]{Lemma}
\newtheorem{corollary}[theorem]{Corollary}
\theoremstyle{definition}
\newtheorem*{definition}{Definition}
\newtheorem*{notation}{Notation}
\newtheorem{example}[theorem]{Example}
\newtheorem{remark}[theorem]{Remark}
\numberwithin{equation}{section}
\numberwithin{theorem}{section}
\title{Effective Versions of Strong Measure Zero}
\author{Matthew Rayman\thanks{This research was supported in part by National Science Foundation research grant 1900716.}\\ \\Department of Computer Science \\ Iowa State University \\Ames, IA 50011, USA \\ \texttt{marayman@iastate.edu}}
\date{}
\begin{document}

\maketitle

\begin{abstract}
Effective versions of strong measure zero sets are developed for various levels of complexity and computability. It is shown that the sets can be equivalently defined using a generalization of supermartingales called odds supermartingales, success rates on supermartingales, predictors, and coverings. We show Borel's conjecture that a set has strong measure zero if and only if it is countable holds in the time and space bounded setting. At the level of computability this does not hold. We show the computable level contains sequences at arbitrary levels of the hyperarithmetical hierarchy. This is done by proving a correspondence principle yielding a condition for the sets of computable strong measure zero to agree with the classical sets of strong measure zero.

An algorithmic version of strong measure zero using lower semicomputability is defined. We show that this notion is equivalent to the set of NCR reals studied by Reimann and Slaman, thereby giving new characterizations of this set.

Effective strong packing dimension zero is investigated requiring success with respect to the limit inferior instead of the limit superior. It is proven that every sequence in the corresponding algorithmic class is decidable. At the level of computability, the sets coincide with a notion of weak countability that we define.
\end{abstract}

\section{Introduction}
\label{sec:introduction}

Measure theory is a well studied area of mathematics resulting in a notion of smallness for sets of measure zero. Resource bounded versions of measure zero sets were developed by Lutz \cite{AEHNC}. Hausdorff \cite{haus19} refined measure zero sets in the classical (non effective) setting with dimension. This yields the dimension zero sets which are a subclass of the measure zero sets. An effective version of dimension defined by Lutz \cite{lutz03a} along with effective measure have led to many connections and results in areas including complexity and randomness. For more of an overview see \cite{lutz1993quantitative,lutz2005effective}. In this paper, we continue the refinement by developing effective versions of a subclass of dimension zero sets called strong measure zero.

Borel \cite{Borel1919} defined a set $X\subseteq\mathbb{R}$ to be \emph{strong measure zero} (SMZ) if for every sequence $(\epsilon_n)_{n\in \mathbb{N}}$ of positive reals there is a set $(I_n)_{n\in\mathbb{N}}$ of intervals such that 
\begin{equation}\label{SMZ}
X\subseteq \bigcup_{n\in \mathbb{N}}I_n \text{ and } |I_n|\leq \epsilon_n \text{ for all } n\in\mathbb{N}
\end{equation}
where $|I_n|$ is the length of $I_n$. It is easy to see that every countable set has strong measure zero. Borel conjectured that a set is strong measure zero if and only if it is countable. The work of Sierpiński \cite{Sierpiński1928} and Laver \cite{Laver1983-LAVOTC} proved that this is independent of ZFC. Strong measure zero is easily defined for subsets of Cantor space which we focus on in this paper. For a thorough background on strong measure zero sets see \cite{Bartoszynski1999-BARSTO-3}.

An effective version of strong measure zero was studied by Higuchi and Kihara \cite{higuchi-kihara} where they only require \eqref{SMZ} to hold for computable sequences
of $(\epsilon_n)$ yielding a weaker notion of strong measure zero sets.  In doing so, they developed a characterization of strong measure zero sets using a generalized version of martingales called \emph{odds supermartingales}. 

In this work we investigate requiring the supermartingales themselves to be effective, leading instead to a stronger notion than strong measure zero sets in the classical setting. These supermartingale objects are similar to those used to define effective measure and dimension, but we will show in section 3 and that there are equivalent characterizations using the success rates of standard supermartingales.

Besicovitch \cite{besicovitch1956definition} showed that a set being strong measure zero is equivalent to the set having Hausdorff measure zero with respect to all gauge functions. Hitchcock  \cite{hitchcock2003fractal} has shown that Hausdorff dimension can be defined using predictors. We extend this to work for arbitrary gauge functions resulting in a new characterization of the strong measure zero sets and a useful tool to study them in the effective setting.

A natural effective version of Borel's conjecture with time and space bounded resources along with at the computable level exists. This uses an effective countability definition by Lutz \cite{AEHNC} which are roughly sets of sequences than can be uniformly enumerated within the resource bound. We prove that the effectively countable sets have effective strong measure zero at every level. For the time and space bounded setting it is proven that Borel's conjecture holds. However, it does not hold at the computable level.

We investigate algorithmic strong measure zero occurring at the level of lower semicomputability. The main result is that the resulting class is equivalent to the well studied set of NCR reals defined by Reimann and Slaman \cite{reimann2015measures} containing sequences that are not algorithmically random with respect to any continuous probability measure. We will discuss this connection more in section 5, but note for now that in the classical setting these are not equivalent. NCR has been proven to be countable \cite{reimann2022effective}. We thus have the following implication diagram where $\mathrm{\Delta}_R$ represents a time or space bounded resource class and the dashed arrow implies independence from ZFC.
\begin{theorem}
\end{theorem}
\begin{center}\renewcommand{\arraystretch}{1.5}
\newcommand{\ra}{\Longrightarrow}
\newcommand{\uda}{\rotatebox[origin=c]{90}{\(\Longleftrightarrow\)}}
\newcommand{\nra}{\centernot \Longrightarrow}                                                                                                                     
\newcommand{\lra}{\Longleftrightarrow}
\newcommand{\ua}{\rotatebox[origin=c]{90}{\(\Longrightarrow\)}}
\newcommand{\da}{\rotatebox[origin=c]{270}{\(\Longrightarrow\)}}
\newcommand{\nua}{\rotatebox[origin=c]{90}{\(\nra\)}}
\newcommand{\nda}{\rotatebox[origin=c]{270}{\(\nra\)}}
\newcommand{\inddown}{\rotatebox[origin=c]{270}{\(\indep\)}}
\begin{tabular}{ccccccc}
$\mathrm{\Delta}_R$-SMZ        & $\ra$ & computable-SMZ & $\ra$ & algorithmic-SMZ        & $\ra$ & SMZ\\
$\uda$         &        & $\ua \nda$  &        & $ \nua\da$         &        &$\ua \inddown$\\
$\mathrm{\Delta}_R$-countable & $\ra$ & computably-countable & $\ra$ & countable & & countable\\
\end{tabular}
\end{center}

We show that effective strong measure zero sets can be defined using effective coverings resulting in a characterization similar to Borel's original definition \eqref{SMZ}. This results in another relatively simple characterization of NCR. While there are analagous classical results as well as in Higuchi and Kihara's version, our setting requires use of different techniques. Using this result, we prove a correspondence principle giving sufficient conditions for computable, algorithmic, and classical versions of strong measure zero to agree on a set. This result is analogous to Hitchcock's \cite{hitchcock2005correspondence} correspondence principle for effective dimension. We use our result to show that there are sequences at arbitrarily high levels of the hyperarithmetic hierarchy with computable strong measure zero, mirroring a result for NCR.

Many of the characterizations of strong measure zero sets involve success with respect to a limit superior. We therefore investigate what happens if success is required in the limit inferior. For dimension, Lutz \cite{lutz03a} showed a limit superior condition on martingales characterizes Hausdorff dimension while Althreya, Hitchcock, Lutz and Mayordomo \cite{AHLM07} showed packing dimension can be characterized with a limit inferior. Requiring limit inferior success at all gauges leads to sets with \emph{strong packing dimension zero} (SPDZ). In the classical setting, the existence of uncountable strong packing dimension zero sets is also independent of ZFC, see for example \cite{zindulka2012small}. For time and space restrictions the sets are the same as strong measure zero. However at the computable level, the sets now correspond with a weak computably-countable notion that we define. It is also shown that  the algorithmic version contains exactly the decidable sequences, giving the following implication chart.

\begin{theorem}
\end{theorem}
\begin{center}\renewcommand{\arraystretch}{1.5}
\newcommand{\ra}{\Longrightarrow}
\newcommand{\uda}{\rotatebox[origin=c]{90}{\(\Longleftrightarrow\)}}
\newcommand{\nra}{\centernot \Longrightarrow}
\newcommand{\lra}{\Longleftrightarrow}
\newcommand{\ua}{\rotatebox[origin=c]{90}{\(\Longrightarrow\)}}
\newcommand{\da}{\rotatebox[origin=c]{270}{\(\Longrightarrow\)}}
\newcommand{\nua}{\rotatebox[origin=c]{90}{\(\nra\)}}
\newcommand{\nda}{\rotatebox[origin=c]{270}{\(\nra\)}}
\newcommand{\inddown}{\rotatebox[origin=c]{270}{\(\indep\)}}
\begin{tabular}{ccccccc}
$\mathrm{\Delta}_R$-SPDZ        & $\ra$ & computable-SPDZ & $\ra$ & algorithmic-SPDZ        & $\ra$ & SPDZ\\
$\uda$         &        & $\uda$  &        & $\uda$         &        &$\ua \inddown$\\
$\mathrm{\Delta}_R$-countable & $\ra$ & weak computably-countable & $\ra$ & $\subseteq$DEC & $\ra$ & countable\\
\end{tabular}
\end{center}

\section{Preliminaries}
We work in the \emph{Cantor space} $\mathbf{C}=\{0,1\}^\infty$ consisting of all infinite binary sequences. A \emph{string} is a finite, binary string $w\in\{0,1\}^*$ which has length $|w|$. We write $\lambda$ for the empty string of length 0. For $0\leq i \leq j \leq |w|-1$ we write $w[i\dots j]$ for the string consisting of the $i^{\text{th}}$ through $j^{\text{th}}$ bits of $w$. Similarly for $i,j\in \mathbb{N}$ with $i\leq j$, we let $S[i\dots j]$ be the string consisting of the $i^{\text{th}}$ through $j^{\text{th}}$ bits of $S$.

For a string $x$ and a string or sequence $y$ we let $xy$ be the concatenation of $x$ and $y$. We say that $x\sqsubseteq y$ if there is a string or sequence $z$ such that $y=xz$. We say $x\sqsubset y$ if $x\sqsubseteq y$ and $x\not = y$. 

Fix an injective function $\langle\enspace,\enspace \rangle$ from $\{0,1\}^*\times \{0,1\}^*$ onto $\{0,1\}^*$ such that $\langle x,y\rangle$ and the projections $\langle x,y \rangle\to x,\langle x, y \rangle \to y$ are all computable in polynomial time.

For a string $x$, the \emph{cylinder} at $x$ is 
\[C_x=\{S\in \mathbf{C} \mid x \sqsubseteq S\}.\]

We associate a language $A\subseteq \{0,1\}^*$ with its \emph{characteristic sequence} ${\chi_A\in \mathbf{C}}$ defined by
\[
    \chi_A(i)= 
\begin{cases}
    1,& \text{if } s_i\in A\\
    0,              & \text{otherwise}
\end{cases}
\]
where $s_0,s_1,\dots$ is the enumeration of all strings in $\{0,1\}^*$ in standard lexicographic order.

\begin{definition}
    A \emph{time bounded resource class} is a set ${T\subseteq\{f:\{0,1\}^*\to \{0,1\}^*\}}$ for which there is a sequence $(g_n)_{n\in \mathbb{N}}$  of functions $g_n:\mathbb{N}\to \mathbb{N}$ satisfying
\begin{enumerate}
    \item $g_n=o(g_{n+1})$ for all $n\in \mathbb{N}.$
    \item There is some computable $h:\mathbb{N}\to \mathbb{N}$ such that $g_n=o(h)$ for all $n\in \mathbb{N}$.
    \item $T=\bigcup_{n\in \mathbb{N}}\{f:\{0,1\}^*\to \{0,1\}^*\mid f\in \mathrm{DTIME}(g_n)\}$.
\end{enumerate} 
\emph{Space bounded resource classes} are defined analogously with DSPACE. In this paper we will let $\mathrm{\Gamma}_R$ be the set consisting of all time or space bounded resource classes that contain either P or PSPACE. 
\end{definition}

We also will look at the classes
\begin{align}
\mathrm{all} &= \{f:\{0,1\}^*\to \{0,1\}^*\}\nonumber \\
\mathrm{comp} & =  \{f\in \mathrm{all} \mid f \text{ is computable} \}\nonumber
\end{align}
and use $\mathrm{\Gamma}_{RCA}$ to be the set $\mathrm{\Gamma}_R$ along with the classes comp and all. Note that the class all corresponds to classical results.

\begin{definition}[Lutz \cite{lutz1990category}]
    A \emph{constructor} is a function $\delta: \{0,1\}^* \to \{0,1\}^*$ such that $x\sqsubset \delta(x)$ holds for all $x\in \{0,1\}^*$. 
\end{definition}
\begin{definition}[Lutz \cite{lutz1990category}]
    The \emph{result} of a constructor $\delta$ is the sequence ${R(\delta)\in \mathbf{C}}$ such that \\$\delta^i(\lambda)\sqsubseteq R(\delta)$ for all $i\in \mathbb{N}$.
\end{definition}
For $\mathrm{\Delta}\in \mathrm{\Gamma}_{RCA}$, let 
\[R(\mathrm{\Delta})=\{R(\delta) \mid \delta\in \mathrm{\Delta} \text{ is a constructor}\}.\]
We have for example \cite{lutz1990category}
\begin{align}
R(\mathrm{all}) &= \mathbf{C} \nonumber\\
R(\mathrm{comp}) & =  \mathrm{DEC}\nonumber\\
R(\mathrm{p}) & = \mathrm{E} \nonumber \\
R(\mathrm{p}\mathrm{space}) & =   \text{E}\text{SPACE}\nonumber
\end{align}
where $\mathrm{p}$ is polynomial time resource bounded, $\mathrm{psace}$ is polynomial space bounded, $\mathrm{DEC}$ is the set of decidable languages and $\mathrm{E},\mathrm{ESPACE}$ are the standard complexity classes.

\begin{definition}
    For a discrete domain $D$ such as $\{0,1\}^*$ we say a function ${f:D\to[0,\infty)}$ is \emph{lower semicomputable} if there is a computable function $g:\mathbb{N}\times D\to \mathbb{Q}\cap[0,\infty)$, called a \emph{computation} of $f$, such that for all $ r\in\mathbb{N},x\in D$,
    \[g(r,x)\leq g(r+1,x)\leq f(x)\]
    and 
    \[\lim_{r\to\infty}g(r,x)=f(x).\]
\end{definition}

Similarly, for $\mathrm{\Delta}\in \mathrm{\Gamma}_{RCA}$ and a discrete domain $D$ we say that $f:D\to [0,\infty)$ is $\mathrm{\Delta}$-computable if there is an function $g:\mathbb{N}\times D\to \mathbb{Q}\cap [0,\infty)\in \mathrm{\Delta}$ called a \emph{computation} of $f$ such that ${|g(r,x)-f(x)|\leq 2^{-r}}$ for all $r\in \mathbb{N}$. In this paper $\mathrm{\Gamma}$ denotes $\mathrm{\Gamma}_{RCA}$ together with the lower semicomputable class. We will also refer to the lower semicomputable level as $\emph{algorithmic}$.

We also need a basic understanding of functionals and their complexity. Functionals generalize functions and can have have inputs and outputs that are functions themselves. Functionals in this paper will mostly have the form of 
\[F:(A \to B) \to (D \to \mathbb{R})\] 
where $A,B$ and $D$ are all discrete domains. Therefore on an input function $f:(A\to B)$ the functional outputs a function $F(f)=g$ for some $g:D\to \mathbb{R}$. To talk about the complexity and computability we look at the functional
\[F':(A \to B) \to (\mathbb{N}\times D \to \mathbb{Q})\]
where $F'(f)=g$ is a computation of $F(f)$.
For $\mathrm{\Delta}\in\mathrm{\Gamma}$ equal to comp or lower semicomputable, we say that the functional $F$ is in $\mathrm{\Delta}$ if there is an oracle Turing machine $M$ such that for all $f:A \to B,r\in \mathbb{N}$ and $x\in D$,  $M$ on input $(r,x)$  with oracle access to $f$ outputs $g(r,x)$ where $g\in\mathrm{\Delta}$ is a computation of $F(f)$ as defined above. 

For time bounded $\mathrm{\Delta}$ we require that the time $M$ takes to output $g(x,r)$ is $h(|x|+r+\Bar{g}(h(|x|+r))$ for some $h$ in $\mathrm{\Delta}$ where $\Bar{g}(n)$ is the maximum length of the oracle $g$'s output on a string of length at most $n$. For example if $\mathrm{\Delta}$ is polynomial time then $M$ can query polynomially far out in the input length and use the maximum length of the query results as a parameter for the polynomial running time. Different notions of functional complexity exist, but all the results in the paper hold for every natural definition and most oracles will not impact the complexity in our use cases.

We therefore use $\mathrm{\Gamma}_R, \mathrm{\Gamma}_{RCA}$ and $\mathrm{\Gamma}$ to represent classes of functions or functionals which will be clear from context. 

\section{Effective Strong Measure Zero Characterizations}
\subsection{Strong Measure Zero}
We begin by utilizing the characterization of strong measure zero developed by Higuchi and Kihara.

\begin{definition}[Higuchi and Kihara \cite{higuchi-kihara}]
    An \emph{odds} function is any function $O:\{0,1\}^*\rightarrow[1,\infty)$. $O$ is said to be \emph{acceptable} if $\prod_{w\sqsubset S}O(w)=\infty$ for all $S\in \mathbf{C}$.
\end{definition}

Higuchi and Kiharas definition of an $O$-supermartingale below can be viewed as a generalization of the other following martingale objects we will use in the paper. 
\begin{definition}\label{martingales}
    Let  $O:\{0,1\}^*\rightarrow[1,\infty)$ be an odds function and $s\in[0,\infty)$.
    \begin{enumerate}
        \item An \emph{$O$-supermartingale} is a function ${d:\{0,1\}^*\rightarrow[0,\infty)}$ which satisfies
        \begin{equation}
            d(w)\geq \frac{d(w0)}{O(w0)}+\frac{d(w1)}{O(w1)} \label{smc}
        \end{equation}
        for all $w\in \{0,1\}^*$.
        \item  An \emph{$O$-martingale} is an $O$-supermartingale that satisfies \eqref{smc} with equality for all $w\in \{0,1\}^*$.
        \item An \emph{s-supergale} is an $O$-supermartingale with the constant odds function $O(w)=2^s$ for all $w\in \{0,1\}^*$.
        \item An \emph{s-gale} is an $O$-martingale with the constant odds function $O(w)=2^s$ for all $w\in \{0,1\}^*$.
        \item A supermartingale is a 1-supergale.
        \item A martingale is a 1-gale.
    \end{enumerate}
\end{definition}

We will use the following easy to verify result throughout the paper.

\begin{observation}
    Let $(d_n)_{n\in\mathbb{N}}$ be a sequence of $O$-supermartingales such that $\sum_{n\in\mathbb{N}}d_n(\lambda)< \infty$. Then $d=\sum_{n\in\mathbb{N}}d_n$ is a $O$-supermartingale. The same holds for the other martingale objects in Definition \ref{martingales}.
\end{observation}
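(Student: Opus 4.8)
The plan is to verify the defining inequality \eqref{smc} for the sum $d=\sum_{n\in\mathbb{N}}d_n$ directly, after first checking that $d$ is well-defined as a function into $[0,\infty)$. The key structural observation is that the supermartingale condition \eqref{smc} is preserved under nonnegative linear combinations, and the hypothesis $\sum_{n\in\mathbb{N}}d_n(\lambda)<\infty$ is exactly what guarantees that the infinite sum converges everywhere rather than just at $\lambda$.

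First I would establish finiteness of $d(w)$ for every string $w$. The natural route is to show that each individual $O$-supermartingale satisfies a growth bound controlled by its initial value: since $O(w)\geq 1$ for all $w$, inequality \eqref{smc} gives $d_n(w0)+d_n(w1)\leq O(w0)d_n(w)+O(w1)d_n(w)$, but a cleaner bound comes from noting $d_n(wb)\leq O(wb)\,d_n(w)$ for each bit $b\in\{0,1\}$, so by induction on $|w|$ we get $d_n(w)\leq \left(\prod_{\lambda\sqsubseteq v\sqsubseteq w}O(v)\right)d_n(\lambda)$ up to indexing conventions. Thus $d_n(w)\leq M_w\, d_n(\lambda)$ where $M_w$ is a finite constant depending only on $w$ and $O$, not on $n$. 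Summing over $n$ yields $d(w)=\sum_{n}d_n(w)\leq M_w\sum_{n}d_n(\lambda)<\infty$, so $d$ is a well-defined function into $[0,\infty)$.

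Next I would verify the inequality. Each term satisfies $d_n(w)\geq \frac{d_n(w0)}{O(w0)}+\frac{d_n(w1)}{O(w1)}$. Because every summand on both sides is nonnegative and all the series involved converge (by the finiteness just established), I can sum this inequality over all $n\in\mathbb{N}$ term by term and interchange summation freely, giving
\[
d(w)=\sum_{n\in\mathbb{N}}d_n(w)\geq \sum_{n\in\mathbb{N}}\left(\frac{d_n(w0)}{O(w0)}+\frac{d_n(w1)}{O(w1)}\right)=\frac{d(w0)}{O(w0)}+\frac{d(w1)}{O(w1)},
\]
which is exactly \eqref{smc} for $d$. For the $O$-martingale case the same computation with equalities gives equality; the $s$-supergale, $s$-gale, supermartingale, and martingale cases all follow immediately since they are the special instances where $O$ is the appropriate constant function, and a constant odds function is unchanged under passing to the sum.

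The main obstacle is the convergence bookkeeping rather than any conceptual difficulty: I must ensure that $d(w)$ is genuinely finite before manipulating the series, since otherwise the term-by-term summation of \eqref{smc} would involve possibly infinite quantities and the rearrangement would be unjustified. This is precisely the role of the hypothesis $\sum_{n}d_n(\lambda)<\infty$, and the per-string growth bound $d_n(w)\leq M_w\,d_n(\lambda)$ is what converts finiteness at the root into finiteness at every node. Once that bound is in hand, every step is a routine interchange of nonnegative sums, so I do not expect to grind through further estimates.
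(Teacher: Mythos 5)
Your proof is correct and is the standard verification; the paper states this observation without proof (calling it "easy to verify"), and your argument — the per-node bound $d_n(w)\leq\bigl(\prod_{\lambda\sqsubset v\sqsubseteq w}O(v)\bigr)d_n(\lambda)$ to get finiteness from the hypothesis at $\lambda$, followed by termwise summation of the nonnegative inequality \eqref{smc} — is exactly the intended routine check. Nothing is missing.
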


\begin{definition}
    Let $X\subseteq \mathbf{C}$ and $d$ be one of the martingale objects in Definition \ref{martingales}. We say that $d$ \emph{succeeds} on $X$ if 
    \[\limsup_{n\to \infty} d(S[0\dots n])=\infty\]
    for all $S\in X$.
\end{definition}

\begin{theorem}[Higuchi and Kihara \cite{higuchi-kihara}]
    A set $X\subseteq \mathbf{C}$ has strong measure zero if and only if, for every acceptable odds $O:\{0,1\}^*\to[1,\infty)$, there is an $O$-supermartingale that succeeds on $X$.
\end{theorem}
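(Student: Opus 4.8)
The plan is to prove both directions by relating the combinatorial covering condition \eqref{SMZ} (for Cantor space, this means covering $X$ by cylinders of prescribed lengths) to the existence of a succeeding $O$-supermartingale. The key dictionary is that a cylinder $C_w$ corresponds to a supermartingale that ``bets everything'' along $w$, and that the ``length'' constraint $\epsilon_n$ translates, via the odds function, into a bound on the initial capital the associated $O$-supermartingale must invest. Throughout I would use the fact (Observation above) that a countable sum of $O$-supermartingales whose initial capitals sum to a finite value is again an $O$-supermartingale, which is what lets me assemble a single succeeding supermartingale from a sequence of covering cylinders.

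\medskip

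\noindent\textbf{($\Rightarrow$) Covering implies a succeeding supermartingale.} Suppose $X$ has strong measure zero and let an acceptable odds function $O$ be given. First I would translate the given odds into a target sequence of interval lengths: since $O$ is acceptable, $\prod_{w\sqsubset S}O(w)=\infty$ for every $S$, so the reciprocal products $\bigl(\prod_{w\sqsubset S}O(w)\bigr)^{-1}$ tend to $0$ along every branch. Concretely, for a string $u$ define its $O$-weight $W(u)=\prod_{w\sqsubset u}O(w)$; acceptability guarantees that the basic ``$O$-mass'' $W(u)^{-1}$ of cylinders shrinks to $0$ along every path. I would choose a summable sequence of positive reals $\epsilon_n$ (e.g.\ $\epsilon_n = 2^{-n}$) and feed it into the SMZ definition to obtain a covering of $X$ by cylinders $C_{u_n}$ with $O$-mass $W(u_n)^{-1}\le \epsilon_n$. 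For each covering string $u_n$ I would define the natural single-cylinder $O$-supermartingale $d_n$ that starts with capital $d_n(\lambda)=W(u_n)^{-1}$, bets so as to place all capital on the cylinder $C_{u_n}$ (using the $O$-odds at each step to grow the capital to $1$ on reaching $u_n$, then keeping it constant thereafter), and keeps nothing outside $C_{u_n}$. Because $\sum_n d_n(\lambda)\le \sum_n \epsilon_n<\infty$, the Observation gives that $d=\sum_n d_n$ is an $O$-supermartingale; and since every $S\in X$ lies in infinitely many covering cylinders (by re-covering with tails of the sequence, or by choosing the $\epsilon_n$ so each point is covered infinitely often), the capital $d(S[0\dots n])$ reaches value $\ge 1$ infinitely often, in fact exceeds any bound, so $\limsup_n d(S[0\dots n])=\infty$, i.e.\ $d$ succeeds on $X$.

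\medskip

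\noindent\textbf{($\Leftarrow$) A succeeding supermartingale yields coverings for every $\epsilon$.} Conversely, suppose that for every acceptable odds there is a succeeding $O$-supermartingale. To verify SMZ I must produce, for an arbitrary sequence $(\epsilon_n)$ of positive reals, a covering of $X$ by cylinders with $|C_{u_n}|\le \epsilon_n$. The idea is to design an acceptable odds function $O$ tailored to the target lengths $\epsilon_n$, so that a succeeding $O$-supermartingale can be converted into such a covering. Given a succeeding $d$ with $d(\lambda)=c$, for each threshold $k$ the set $U_k=\{u : d(u)\ge 2^k,\ d(u')<2^k\text{ for all }u'\sqsubset u\}$ is an antichain of ``first-crossing'' strings, and by the supermartingale inequality the total $O$-mass $\sum_{u\in U_k}W(u)^{-1}\le c\,2^{-k}$. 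Every $S\in X$ passes through some $u\in U_k$ (since $d$ succeeds, its capital eventually exceeds $2^k$), so $\{C_u : u\in U_k\}$ covers $X$ with small total $O$-mass. By choosing the odds $O$ so that $W(u)^{-1}$ small forces the cylinder length $|C_u|=2^{-|u|}$ to be below the prescribed $\epsilon_n$ — arranging $O$ to grow fast enough that large $O$-weight forces long strings — and by enumerating the covering cylinders across the thresholds $k$ and matching them to the $\epsilon_n$, I obtain the required covering.

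\medskip

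\noindent\textbf{Main obstacle.} The routine parts are the single-cylinder supermartingale construction and the summability bookkeeping via the Observation. The delicate step, and the one I expect to be the crux, is the ($\Leftarrow$) direction: correctly engineering the acceptable odds function $O$ from an \emph{arbitrary} target sequence $(\epsilon_n)$ so that (i) $O$ remains acceptable (every branch-product diverges) and (ii) the first-crossing antichains of \emph{any} succeeding $O$-supermartingale translate into cylinders short enough to respect the $\epsilon_n$ bounds, with the enumeration aligned so that the $n$-th chosen cylinder has length at most $\epsilon_n$. Managing the interplay between the $O$-weights $W(u)^{-1}$ and the geometric lengths $2^{-|u|}$, while keeping the covering indexed compatibly with $(\epsilon_n)$, is where the real care is needed.
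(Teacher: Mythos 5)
Your overall architecture is the right one --- and, although the paper imports this theorem from Higuchi and Kihara without proof, it is essentially the same machinery the paper itself deploys in Section 6 to prove the effective covering characterization --- but two concrete steps are missing, one in each direction. In ($\Rightarrow$), the definition \eqref{SMZ} only lets you prescribe the \emph{lengths} $|C_{u_n}|=2^{-|u_n|}$, not the $O$-masses $\mu_O(u_n)=W(u_n)^{-1}$; to request a cover with $\mu_O(u_n)\le 2^{-n}$ you must first produce, for each $n$, a length $\ell_n$ with $\max_{|w|=\ell_n}\mu_O(w)\le 2^{-n}$ and then feed $(2^{-\ell_n})$ into \eqref{SMZ}. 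Acceptability gives $\mu_O(S[0\dots n])\to 0$ along each individual branch, but the uniform statement $\max_{|w|=\ell}\mu_O(w)\to 0$ requires K\H{o}nig's lemma / compactness (exactly the step invoked in the proof of Lemma \ref{SMZ-SP}); your write-up states only the pointwise version, which by itself does not let you pick a single $\ell_n$ that works for all strings.

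In ($\Leftarrow$), which you correctly identify as the crux, the mass bound $\sum_{u\in U_k}\mu_O(u)\le c\,2^{-k}$ is not by itself enough: to give the $n$-th covering cylinder length at least $-\log\epsilon_n$ for an \emph{arbitrary} $(\epsilon_n)$ you need to bound the \emph{number} of covering strings occurring at each depth, and that count must be available \emph{before} the supermartingale is handed to you, or else you cannot decide in advance which $\epsilon_n$'s to allot to which depth. The standard repair, which is also what the paper's Algorithm 3 does in the effective setting, is: normalize so that $d(\lambda)<1$; take $O(w)=2$ exactly at a sparse set of checkpoint lengths $m_0<m_1<\cdots$ and $O(w)=1$ elsewhere, so that $d$ can only increase at checkpoints and $\mu_O$ equals the constant $2^{-(j+1)}$ at checkpoint $j$, turning the mass bound into the cardinality bound that at most $2^{j+1}$ strings at checkpoint $j$ satisfy $d\ge 1$; then pre-allocate a block of $2^{j+1}$ indices $n$ to checkpoint $j$ and choose $m_j$ so large that $2^{-m_j}\le\epsilon_n$ for every $n$ in that block. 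Without this counting-plus-preallocation step, your phrase ``matching them to the $\epsilon_n$'' does not go through: for a general acceptable $O$ the first-crossing antichains $U_k$ can contain many strings whose lengths cannot be reconciled with a rapidly shrinking $(\epsilon_n)$, and the block sizes would depend on $d(\lambda)$, which is circular unless you normalize first.
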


We will also make use of the following standard definitions and a lemma in their paper.
\begin{definition}
    An $\emph{outer premeasure}$ is a monotone subadditive atomless function $\mu:\{0,1\}^*\to [0,\infty)$. That is, for all $w\in \{0,1\}^*, a\in\{0,1\}$ and $S\in\mathbf{C}$ the following three properties hold: 
    \begin{enumerate}
        \item (monotone) $\mu(w)\geq \mu(wa)$.
        \item (subadditive) $\mu(w)\leq \mu(w0)+\mu(w1)$.
        \item (atomless) $\liminf_{n\to \infty}\mu(S[0\dots n])=0$.
    \end{enumerate}
    
    $\mu$ can be extended to an \emph{outer measure} $\mu^*:\mathcal{P}(\mathbf{C})\to [0,\infty)$ by the "Method I construction"\cite{rogers1998hausdorff} to obtain
    \begin{equation}\label{outerpremeasure}
        \mu^*(X)=\inf\{\sum_{w\in A}\mu(w) \mid A\subseteq \{0,1\}^* \text{ and } X\subseteq \bigcup_{w\in A}C_w\}.
    \end{equation}
\end{definition}

\begin{lemma}[Higuchi and Kihara \cite{higuchi-kihara}]\label{premeasure}
    Let $O$ be an odds function with $O(w0)^{-1}+O(w1)^{-1}\geq 1$ for all $w\in\{0,1\}^*$. Then the function $\mu_O:\{0,1\}^*\to [0,\infty)$ defined by 
    \[\mu_O(w)=\prod_{i=0}^{|w|-1}O(w[0\dots i])^{-1}\]
    is an outer premeasure.
\end{lemma}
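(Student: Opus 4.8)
The plan is to verify the three defining properties of an outer premeasure directly from the multiplicative recursion that defines $\mu_O$. The key observation is that for any string $w$ and bit $a\in\{0,1\}$ we have the one-step identity $\mu_O(wa)=\mu_O(w)\,O(wa)^{-1}$, with $\mu_O(\lambda)=1$ coming from the empty product. Everything reduces to this relation together with the hypotheses $O(w)\geq 1$ and $O(w0)^{-1}+O(w1)^{-1}\geq 1$.

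Monotonicity is immediate: since $O(wa)\in[1,\infty)$ we have $O(wa)^{-1}\leq 1$, so $\mu_O(wa)=\mu_O(w)O(wa)^{-1}\leq\mu_O(w)$. Subadditivity is where the standing hypothesis on $O$ enters: factoring $\mu_O(w)$ out of the two children gives $\mu_O(w0)+\mu_O(w1)=\mu_O(w)\bigl(O(w0)^{-1}+O(w1)^{-1}\bigr)\geq\mu_O(w)$. Both computations are routine once the recursion is in hand.

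The atomless condition is the only substantive step and the place I expect the main difficulty. Unwinding the definition along a branch gives $\mu_O(S[0\dots n])=\bigl(\prod_{i=0}^{n}O(S[0\dots i])\bigr)^{-1}$, and since every factor is at least $1$ the partial products form a nondecreasing sequence. To force $\liminf_{n\to\infty}\mu_O(S[0\dots n])=0$ I would argue that this product diverges to $\infty$ for each $S\in\mathbf{C}$; this is precisely the acceptability condition $\prod_{w\sqsubset S}O(w)=\infty$, under which the nondecreasing partial products tend to $\infty$ and hence $\mu_O(S[0\dots n])\to 0$. I would flag that the inequality $O(w0)^{-1}+O(w1)^{-1}\geq 1$ by itself does not yield this step—taking $O\equiv 1$ makes $\mu_O$ constantly $1$—so the divergence of $O$ along branches is genuinely needed, and checking that it delivers the full limit $0$ (not merely along a subsequence, though the liminf is all that is required) is the one point that deserves care.
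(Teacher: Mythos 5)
Your verification is correct, and since the paper states this lemma as a citation to Higuchi and Kihara without reproducing a proof, there is nothing in the source to compare against; your argument is the standard one that any proof would follow. The one substantive point is the one you flagged, and you are right to flag it: monotonicity and subadditivity follow from the one-step recursion $\mu_O(wa)=\mu_O(w)O(wa)^{-1}$ together with $O\geq 1$ and the hypothesis $O(w0)^{-1}+O(w1)^{-1}\geq 1$, but the atomless condition $\liminf_{n\to\infty}\mu_O(S[0\dots n])=0$ genuinely fails for $O\equiv 1$, so acceptability ($\prod_{w\sqsubset S}O(w)=\infty$ for every $S\in\mathbf{C}$) must be assumed even though it is absent from the literal statement. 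This is consistent with the paper's standing convention, announced shortly after the lemma, that all odds functions are taken to be acceptable unless stated otherwise, and under that hypothesis your observation that the nondecreasing partial products diverge gives the full limit $\mu_O(S[0\dots n])\to 0$, which is stronger than the required $\liminf$. No gaps beyond that; the proof is complete.
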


One can then show that there is an $O$-supermartingale that succeeds on a set $X$ if and only if $\mu_O^*(X)=0$. In their paper, they defined a set as having effective strong measure zero if for every computable odds function $O$ there exists an $O$-supermartingale that succeeds on the set. Here we instead use the following definition forcing effectivity on the $O$-supermartingales themselves.

\begin{definition} 
    Let $\mathrm{\Delta}\in \mathrm{\Gamma}$. A set $X\subseteq \mathbf{C}$ has \emph{$\mathrm{\Delta}$-strong measure zero}, $X\in\mathrm{\Delta}\dSMZ$, if there is a functional
    \[F:(\{0,1\}^*\to \mathbb{Q}\cap[1,2]) \rightarrow (\{0,1\}^*\to[0,\infty))\]  
    in $\Delta$ such that for every acceptable odds function ${O:\{0,1\}^*\to\mathbb{Q}\cap[1,2]}$, $F(O)$ is an\\ $O$-supermartingale that succeeds on $X$. We say $F$ is an odds functional and write $F^O$ for $F(O)$.
\end{definition}

 The fact that we limit odds functions to rationals in the interval $[1,2]$ may appear to be a weaker condition, but we will see that the definition is robust to this change and others later in this section. Note that all our odds functions therefore correspond to outer premeasures as in Lemma \ref{premeasure}. From now on we will assume all odds functions are acceptable unless stated otherwise. We can also now see the connection between effective strong measure zero, measure zero, and dimension zero.

\begin{definition}[Lutz  \cite{AEHNC,lutz03a,lutz2003dimensions}] Let $X\subseteq\mathbf{C}$ and $\mathrm{\Delta}\in \mathrm{\Gamma}$
    \begin{enumerate}
        \item $X$ has $\mathrm{\Delta}$-\emph{measure zero} ($X\in\mathrm{\Delta}\dMZ$), if there is a supermartingale $d\in \mathrm{\Delta}$ that succeeds on $X$.
        \item $X$ has $\mathrm{\Delta}$-\emph{dimension zero} ($X\in\mathrm{\Delta}\dDZ$), if there is a $s$-supergale $d\in \mathrm{\Delta}$ that succeeds on $X$ for all $s\in\mathbb{Q}\cap(0,1)$.
    \end{enumerate}
\end{definition}

We therefore have the desired connections.
\begin{observation}
    For $X\subseteq \mathbf{C}$ and $\mathrm{\Delta}\in \mathrm{\Gamma}$ the following holds:
    \[X\in\mathrm{\Delta}\dSMZ \implies X\in\mathrm{\Delta}\dDZ \implies X\in\mathrm{\Delta}\dMZ\]
\end{observation}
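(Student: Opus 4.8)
The plan is to prove both implications directly from the definitions, exploiting two elementary monotonicity facts: the defining inequality of an $s$-supergale is \emph{weaker} for larger $s$, and an $O$-supermartingale for a constant odds function $O\equiv 2^s$ is exactly an $s$-supergale. Neither implication requires constructing a new martingale object; in each case the witness for the stronger notion already serves, after a trivial reinterpretation, as a witness for the weaker one.

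For $X\in\mathrm{\Delta}\dSMZ\implies X\in\mathrm{\Delta}\dDZ$, let $F$ be the odds functional in $\mathrm{\Delta}$ witnessing $X\in\mathrm{\Delta}\dSMZ$, and fix $s\in\mathbb{Q}\cap(0,1)$. I would choose a rational $q$ with $1<q<2^s$ (possible since $2^s>1$) and feed $F$ the constant odds function $O\equiv q$. This $O$ is acceptable, since $\prod_{w\sqsubset S}O(w)=\lim_N q^{N}=\infty$, and it has range in $\mathbb{Q}\cap[1,2]$ because $1<q<2^s<2$, so $F^O$ is defined and lies in $\mathrm{\Delta}$. By definition $F^O$ is an $O$-supermartingale succeeding on $X$; with $O\equiv q$ the condition \eqref{smc} reads $q\,F^O(w)\ge F^O(w0)+F^O(w1)$, which is precisely the statement that $F^O$ is a $(\log_2 q)$-supergale. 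Since $\log_2 q<s$ and $F^O\ge 0$, the same inequality yields $2^s F^O(w)\ge q\,F^O(w)\ge F^O(w0)+F^O(w1)$, so $F^O$ is also an $s$-supergale succeeding on $X$. As $s\in\mathbb{Q}\cap(0,1)$ was arbitrary, $X\in\mathrm{\Delta}\dDZ$.

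For $X\in\mathrm{\Delta}\dDZ\implies X\in\mathrm{\Delta}\dMZ$, the same monotonicity makes the argument immediate. Taking any $s\in\mathbb{Q}\cap(0,1)$, let $d\in\mathrm{\Delta}$ be an $s$-supergale succeeding on $X$. Because $2^s<2$ and $d\ge 0$, we have $2\,d(w)\ge 2^s d(w)\ge d(w0)+d(w1)$, which is exactly the supermartingale ($1$-supergale) condition. Thus $d$ is already a supermartingale in $\mathrm{\Delta}$ succeeding on $X$, giving $X\in\mathrm{\Delta}\dMZ$.

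The only genuine subtlety — the main obstacle, though a mild one — is that the $\dSMZ$ definition restricts odds functions to rational values in $[1,2]$, so I cannot feed $F$ the constant odds $2^s$ directly when $2^s$ is irrational. This is circumvented by approximating from below with a rational $q<2^s$ and then invoking the nesting of supergale classes in the parameter $s$; the approximation costs nothing, since both success on $X$ and membership in $\mathrm{\Delta}$ are preserved, and the looser inequality for the larger parameter $s$ is automatic. For the time- and space-bounded $\mathrm{\Delta}$ one should also note in passing that plugging a constant oracle into a $\mathrm{\Delta}$-functional yields a function within the same resource bound, so $F^O\in\mathrm{\Delta}$ as required.
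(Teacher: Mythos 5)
Your proof is correct, and it fleshes out exactly the argument the paper leaves implicit (the Observation is stated without proof as an immediate consequence of the definitions): constant rational odds functions $O\equiv q$ with $1<q<2^s$ are acceptable, lie in the permitted range $\mathbb{Q}\cap[1,2]$, and turn the $O$-supermartingale condition into the $(\log_2 q)$-supergale condition, which by monotonicity in $s$ gives the $s$-supergale and supermartingale conditions. Your handling of the irrationality of $2^s$ via a rational underapproximation is the right (and only) subtlety.
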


\subsection{Strong Dimension Zero}
In this section we show that strong measure zero can be equivalently defined in terms of success rates of supermartingales as is the case for dimension \cite{lutz2003dimensions,downey2010algorithmic}. 
\begin{definition}
    A \emph{gauge function}  is a function $g:\mathbb{N}\to \mathbb{Q}^+$ that is non-increasing with\\ ${\lim_{n\to\infty}g(n)=0}$.
\end{definition}

\begin{definition}
    A supermartingale $d$ \emph{$g$-succeeds} on $X\subseteq\mathbf{C}$ for a  gauge function $g$ if 
\[\limsup_{n\to \infty}\frac{d(S[0\dots n-1])}{2^{n}g(n)}=\infty\]
for all $S\in X$.
\end{definition}

\begin{remark}\label{gauges}
    Staiger \cite{staiger2017exact} showed that a set has Hausdorff measure zero with respect to $g$ if and only if there is a supermartingale $d$ that $g$-succeeds on it. In his work he looked at determining an \emph{exact} gauge function for a set at the lower semicomputable and computable level. In this work we will instead be looking at sets that succeed on all gauges with oracle access to the gauges. In Staiger's paper, and typically in fractal geometry, gauge functions have domain and range of $\mathbb{R}^+$, see for example \cite{falc14} for more background. The domain corresponds to diameters of sets which in our setting of Cantor space  will be of the form $2^{-n}$ for $n\in \mathbb{N}$. Schnorr \cite{DBLP:journals/mst/Schnorr71} also looked at success rates with respect to order functions that can be seen as inverses of computable gauge functions.
\end{remark}

\begin{definition}
Let $\mathrm{\Delta}\in \mathrm{\Gamma}$. A set $X\subseteq \mathbf{C}$ has \emph{$\mathrm{\Delta}$-strong dimension zero}, ${X\in \Delta\dSDZ}$, if there is a functional  
\[F:(\mathbb{N}\rightarrow\mathbb{Q}^+) \rightarrow (\{0,1\}^*\rightarrow[0,\infty))\]
in  $\mathrm{\Delta}$ such that for every gauge function $g:\mathbb{N}\rightarrow[0,\infty)$, $F(g)$ is a supermartingale that $g$-succeeds on $X$. We say $F$ is a dimension functional and write $F^g$ for $F(g)$.
\end{definition}

\begin{lemma}{\label{OddsSMtoSM}}
    Let $O:\{0,1\}^*\to [1,\infty)$ be an acceptable odds function. Then a function \\${d:\{0,1\}^*\to [0,\infty)}$ is a (super)martingale if and only if ${d':\{0,1\}^*\to [0,\infty)}$ defined by
    \[d'(w)=\frac{d(w)}{2^{|w|}\mu_O(w)}\]
    is an $O$-(super)martingale.
\end{lemma}
\begin{proof}
Let $d$ and $d'$ be as given. Then
  \begin{align}
 &d(w)\geq \frac{d(w0)+d(w1)}{2}\nonumber \\
\iff  &d(w)\geq \frac{d'(w0)2^{|w0|}\mu(w0)+d'(w1)2^{|w1|}\mu(w1)}{2}\nonumber\\
\iff  &d(w)\geq \frac{2^{|w|}\mu(w)(d'(w0)O(w0)^{-1}+d'(w1)O(w1)^{-1})}{1}\nonumber\\
\iff  &d'(w)\geq \frac{d'(w0)}{O(w0)}+\frac{d'(w1)}{O(w1)}.\nonumber
\end{align} 
For the martingale version equality holds in every line.
\end{proof}

We will later see that following lemma holds for all $\mathrm{Delta}\in\mathrm{\Gamma}$ (see remark \ref{robust}).
\begin{lemma}\label{SMZ-SP}
    For $\mathrm{\Delta} \in \mathrm{\Gamma}$ allowing at least exponential time or polynomial space resources, a set $X$ has $\mathrm{\Delta}\dSMZ$ if and only if it has $\mathrm{\Delta}\dSDZ$.
\end{lemma}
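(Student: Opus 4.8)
The plan is to prove both implications by translating through Lemma~\ref{OddsSMtoSM}, which matches a supermartingale $d$ with the $O$-supermartingale $d'(w)=d(w)/(2^{|w|}\mu_O(w))$. Writing $w_n=S[0\dots n-1]$ (so $|w_n|=n$), the $O$-supermartingale $d'$ succeeds on $S$ precisely when $\limsup_n d(w_n)/(2^n\mu_O(w_n))=\infty$, whereas $d$ $g$-succeeds on $S$ precisely when $\limsup_n d(w_n)/(2^n g(n))=\infty$. Thus the two success notions differ only in whether the prefix $w_n$ is normalized by the \emph{path-dependent} weight $\mu_O(w_n)$ or by the \emph{length-only} gauge $g(n)$. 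In each direction I will manufacture the missing object so that one normalizer dominates the other along every path, forcing the corresponding success to transfer; the two constructions are dual, and the complexity cost is asymmetric.

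For $\mathrm{\Delta}\dSMZ\Rightarrow\mathrm{\Delta}\dSDZ$, I am given an odds functional and a gauge $g$, and must produce a supermartingale $g$-succeeding on $X$. Since $g$ is non-increasing with limit $0$, at most finitely many values exceed $1$, and because $g$-success is a $\limsup$ condition I may assume $g(n)\le 1$ for all $n$. I build an acceptable odds function depending only on length, $O(w)=c_{|w|}$, via the greedy recursion $P_0=1$ and $P_n=\min(2P_{n-1},\,1/g(n))$, setting $c_n=P_n/P_{n-1}$. Monotonicity of $g$ gives $1/g(n)\ge 1/g(n-1)\ge P_{n-1}$, so each $c_n\in\mathbb{Q}\cap[1,2]$; the cap $P_n\le 1/g(n)$ gives $\mu_O(w)=1/P_{|w|}\ge g(|w|)$; and $P_n\to\infty$ (it either attains $1/g(n)\to\infty$ infinitely often or eventually doubles at every step) makes $O$ acceptable. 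Applying the odds functional to this $O$ yields an $O$-supermartingale $d'=F^O$ succeeding on $X$; setting $d(w)=d'(w)\,2^{|w|}\mu_O(w)$ gives a supermartingale by Lemma~\ref{OddsSMtoSM}, and $\mu_O(w_n)\ge g(n)$ turns success of $d'$ into $g$-success of $d$. The map $g\mapsto O$ is cheap to compute, so this direction imposes no resource demand.

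For $\mathrm{\Delta}\dSDZ\Rightarrow\mathrm{\Delta}\dSMZ$, I am given a dimension functional and an acceptable odds function $O$, and must produce an $O$-supermartingale succeeding on $X$. The natural choice is the gauge $g(n)=\max_{|w|=n}\mu_O(w)$. I must check this is a legitimate gauge: it is rational and positive since $\mu_O(w)\ge 2^{-|w|}$; it is non-increasing because $\mu_O(w)=\mu_O(w')O(w)^{-1}\le\mu_O(w')$ for $w'$ the immediate prefix of $w$; and it tends to $0$ by a König's lemma argument, the one genuinely non-routine point. Indeed, if $g(n)\ge\varepsilon$ for all $n$, then the prefix-closed set $\{w:\mu_O(w)\ge\varepsilon\}$ is an infinite binary tree and hence contains an infinite path $S$ with $\prod_{w\sqsubset S}O(w)\le 1/\varepsilon<\infty$, contradicting acceptability of $O$. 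Applying the dimension functional gives a supermartingale $d=F^g$ that $g$-succeeds on $X$; setting $d'(w)=d(w)/(2^{|w|}\mu_O(w))$ gives an $O$-supermartingale by Lemma~\ref{OddsSMtoSM}, and $\mu_O(w_n)\le g(n)$ converts $g$-success of $d$ into success of $d'$.

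The main obstacle, and the reason for the hypothesis \emph{at least exponential time or polynomial space}, lies entirely in this second direction: evaluating $g(n)=\max_{|w|=n}\mu_O(w)$ ranges over the $2^n$ strings of length $n$. Exponential time suffices to enumerate them, while polynomial space suffices to sweep a length-$n$ counter while carrying the running maximum; I would also verify that the rationals $\mu_O(w)$, products of up to $n$ oracle values of $O$, stay within the space bound, and that composing this gauge oracle with the given dimension functional keeps the whole map inside $\mathrm{\Delta}$. No comparable cost arises in the first direction, which is why the resource hypothesis is needed only to realize the implication $\mathrm{\Delta}\dSDZ\Rightarrow\mathrm{\Delta}\dSMZ$.
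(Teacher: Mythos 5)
Your proof is correct and follows essentially the same route as the paper: both directions go through Lemma~\ref{OddsSMtoSM}, with the gauge-to-odds direction handled by a length-only odds function whose ratios are capped at $2$ (your greedy recursion $P_n=\min(2P_{n-1},1/g(n))$ is exactly one implementation of the paper's ``WLOG $h(n)\le 2h(n+1)$'' normalization), and the odds-to-gauge direction handled by $g(n)=\max_{|w|=n}\mu_O(w)$ with acceptability plus compactness giving $g(n)\to 0$. Your version just spells out the acceptability and K\H{o}nig's-lemma details, and correctly locates the resource hypothesis in the exponential-size maximum of the second direction.
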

\begin{proof}
Suppose $X$ has $\mathrm{\Delta}$-strong measure zero with witnessing odds functional $F$. We will create a gauge functional $G$ as follows. Given a gauge function $g$, without loss of generality suppose $g(n)\leq 2g(n+1)$ for all $n$, otherwise $g$ can be transformed within $\mathrm{\Delta}$'s restrictions into a harder $g'$ satisfying this with $g'(n)\geq g(n)$ for all $n$. Now let $G$ use $F$ with odds function $O$ where $O(w)=\frac{g(|w|-1)}{g(|w|)}$ for all strings $w$ with $|w|>0$ and $g(-1)=1$. Then we have $\mu_O(w)=g(|w|)$ and
\[G^g(w)=F^O(w)2^{|w|}g(|w|)\] 
is a supermartingale by Lemma \ref{OddsSMtoSM}. Moreover, 
\[\limsup_{n\to \infty} \frac{G^g(S[0\dots n])}{2^ng(n)}=\limsup_{n\to \infty}F^O(S[0\dots n])=\infty\] 
for all $S\in X$.

For the other direction suppose $X$ has $\mathrm{\Delta}$-strong dimension zero with witnessing gauge functional $G$. Let $F$ be an odds functional that given a odds function $O$ computes a gauge function
\[g(n)=\max\{\mu_O(w)\mid w\in \{0,1\}^n\}.\]
Note that by compactness and the fact that $O$ is acceptable we have $lim_{n\to \infty} g(n)=0$. Therefore,
\[F^O(w)=\frac{G^g(w)}{2^{|w|}\mu_O(w)}\] 
is an $O$-supermartingale by Lemma \ref{OddsSMtoSM} and 
\[\limsup_{n\to \infty} F^O(S[0\dots n])\geq\limsup_{n\to \infty}\frac{G^g(S[0\dots n])}{2^ng(n)}=\infty\]
for all $S\in X$.
These conversions in both directions can be done within the restrictions of $\mathrm{\Delta}$.
\end{proof}

\begin{observation}\label{oddsrange}
    Let $D$ be any of the following sets or its intersection with $\mathbb{Q}$. Then, defining $\mathrm{\Delta}$ strong measure zero as the sets succeeding on all acceptable odds functions $O:\{0,1\}^*\to D$ results in equivalent definitions for every $\mathrm{\Delta}\in \mathrm{\Gamma}$.
    \begin{enumerate}
        \item $[1,2]$
        \item $[1,\infty)$
        \item $\{1,2\}$
        \item $(1,2]$
    \end{enumerate}
\end{observation}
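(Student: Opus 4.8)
The plan is to treat the range $[1,2]\cap\mathbb{Q}$ as a hub and prove each of the other three ranges equivalent to it; transitivity then yields the full equivalence. For the ranges $\{1,2\}$ and $(1,2]$, which sit inside $[1,2]$, and for $[1,\infty)$, which contains it, exactly one implication is immediate by restriction: a functional $F$ witnessing $\mathrm{\Delta}\dSMZ$ for a family of odds is automatically a witness for any subfamily, since every acceptable odds with values in the smaller set is in particular an acceptable odds with values in the larger one, and $F$ already returns a succeeding supermartingale for it. Hence all the content lies in the three reductions that \emph{enlarge} the family of odds the functional must handle: producing an $[1,\infty)$-witness from a $[1,2]$-witness, and producing a $[1,2]$-witness from a $\{1,2\}$-witness and from a $(1,2]$-witness.

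The single tool behind every such reduction is the premeasure rescaling of Lemma~\ref{OddsSMtoSM}. Suppose we must produce, for a target acceptable odds $O$, a succeeding $O$-supermartingale, and we have at our disposal a functional handling a surrogate acceptable odds $P$ lying in the allowed range. Given the succeeding $P$-supermartingale $d=F(P)$, I claim $d^{O}(w):=d(w)\,\mu_{P}(w)/\mu_{O}(w)$ is a succeeding $O$-supermartingale whenever $\inf_{w}\mu_{P}(w)/\mu_{O}(w)=c>0$. Indeed, by Lemma~\ref{OddsSMtoSM} applied with odds $P$ the function $w\mapsto d(w)2^{|w|}\mu_{P}(w)$ is an ordinary supermartingale, and feeding that same ordinary supermartingale back through Lemma~\ref{OddsSMtoSM} with odds $O$ exhibits $d^{O}$ as an $O$-supermartingale; and since $d^{O}(S[0\dots n])\ge c\,d(S[0\dots n])$ along every $S\in X$, success is inherited from $d$. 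So each reduction collapses to a single task: for the target $O$, construct within $\mathrm{\Delta}$ an acceptable surrogate $P$ in the allowed range with $\mu_{P}\ge c\,\mu_{O}$ for a fixed constant $c>0$.

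I would then carry out the three constructions. To handle an $[1,\infty)$ target with a $[1,2]$ surrogate, take $P=\min(O,2)$; then $P\le O$ gives $\mu_{P}\ge\mu_{O}$, and $P$ stays acceptable because $\sum a_{n}=\infty$ forces $\sum\min(a_{n},1)=\infty$, applied to $a_{n}=\log_{2}O$ along each path (I would record this one-line fact separately). To handle a $[1,2]$ target with a $(1,2]$ surrogate, set $P(w)=O(w)$ where $O(w)>1$ and $P(w)=1+2^{-|w|}$ where $O(w)=1$; then $P\ge O$ preserves acceptability, the only extra premeasure decay comes from the perturbed coordinates, and it is bounded by $\prod_{n\ge 0}(1+2^{-n})<\infty$, so $\mu_{P}\ge c\,\mu_{O}$ with $c=\big(\prod_{n}(1+2^{-n})\big)^{-1}>0$. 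The delicate case is a $[1,2]$ target with a $\{1,2\}$ surrogate: writing $L(w)=-\log_{2}\mu_{O}(w)=\sum_{i<|w|}\log_{2}O(w[0\dots i])$, which increases by $\log_{2}O(w)\in[0,1]$ at each step and tends to $\infty$ along every path by acceptability, I set $P(w)=2$ exactly when $\lfloor L(w)\rfloor>\lfloor L(w^{-})\rfloor$, where $w^{-}$ is $w$ with its last bit deleted (an integer is crossed at this coordinate), and $P(w)=1$ otherwise. Because each increment is at most $1$, the number of $2$'s among the prefixes of $w$ is exactly $\lfloor L(w)\rfloor$, whence $\mu_{P}(w)=2^{-\lfloor L(w)\rfloor}$ and $\mu_{P}(w)/\mu_{O}(w)=2^{\{L(w)\}}\in[1,2)$; thus $c=1$ works and $P$ is acceptable since $L\to\infty$ forces infinitely many crossings.

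Finally I would check effectivity and dispose of the real-valued versions. Each surrogate is rational when $O$ is, the rescaling factor $\mu_{P}/\mu_{O}$ is a ratio of finite products of rationals hence exactly computable, and the tests defining $P$ are rational comparisons; crucially, evaluating $P(w)$ needs $O$ only on the $|w|$ prefixes of $w$ and never a maximum over all strings of a given length, so no exponential search is introduced and the conversion stays inside any $\mathrm{\Delta}\in\mathrm{\Gamma}$ down to polynomial time. This is exactly the feature that lets the observation hold for every $\mathrm{\Delta}$, in contrast to Lemma~\ref{SMZ-SP}. For a range $D$ of genuine reals I would first replace $O$ by a rational surrogate $\tilde O(w)\in[O(w)^{1/2},O(w)]$, which keeps $\mu_{\tilde O}\ge\mu_{O}$ and remains acceptable since $\log_{2}\tilde O\ge\tfrac12\log_{2}O$, reducing to the rational case. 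The main obstacle is the $\{1,2\}$ reduction: naive pointwise domination of odds is hopeless there (one cannot match a continuum of odds values, nor dominate the value $1$ from below by a value exceeding $1$), so the real work is the realization that one should abandon pointwise odds inequalities in favor of the premeasure-ratio transfer above and then engineer a surrogate whose premeasure stays within a constant factor of the target's; getting the integer-crossing discretization and its acceptability right is the crux.
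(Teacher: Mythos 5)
Your proposal is correct and takes essentially the same route as the paper: both reduce everything to constructing, for a target odds function $O$, a surrogate odds function in the permitted range whose premeasure agrees with $\mu_O$ up to a fixed multiplicative constant, and then transferring success by rescaling through Lemma~\ref{OddsSMtoSM}. The only difference is organizational (you route all equivalences through the hub $[1,2]\cap\mathbb{Q}$ and write out all three surrogate constructions, including the integer-crossing discretization for the range $\{1,2\}$, where the paper states the general scheme, works one example pair, and leaves the remaining verifications as routine).
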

\begin{proof}
    Let $A$ and $B$ any of these two sets and suppose that a set $X\subseteq\mathbf{C}$ has strong measure zero in terms of odds functions $O:\{0,1\}^*\to A$ with $F$ being a witnessing odds functional. We will show there is an odds functional $G$ with odds functions $O:\{0,1\}^*\to B$ that succeeds on $X$. Given an acceptable odds function $O:\{0,1\}^*\to B$, compute an acceptable odds function $O':\{0,1\}^*\to A$ such that 
    \begin{equation}\label{leq}
        \mu_{O'}(w)\leq c\mu_O(w)
    \end{equation}
    for some fixed constant $c$ and all $w\in\{0,1\}^*$. Then let $G$ be the odds functional defined by 
    \[G^O(w)=\frac{F^O(w)\mu_O(w)}{\mu_{O'}(w)}.\]
    By Lemma \ref{OddsSMtoSM} we have that $G^O$ is an $O$-supermartingale and by \eqref{leq} ${G^O(w)\geq \frac{1}{c}F^O(w)}$ so it succeeds on $X$. 

    For an example of how to construct $O'$, let $A=(1,2]\cap \mathbb{Q}$ and $B=[1,\infty)$. Then given odds function $O:\{0,1\}^*\to B$ define $O':\{0,1\}^*\to A$ recursively in the following way. If $O(w)>2$, let $O'(w)=2$ If $O(w)=1+a_w\in(1,2)$, let $O'(w)\in \mathbb{Q}$ be such that $1+\frac{a_w}{2}\leq O'(w)\leq 1+a_w$. If $O(w)=1$, let $O'(wx)=1+\frac{1}{2^{|x|+1}}$ for all $x\in\{0,1\}^*$ until $\prod_{i=0}^{|x|-1}O(wx[0\dots i])\geq 3$. It is routine to check that $O'$ is an acceptable odds function that satisfies \eqref{leq} with $c=3$.
    
\end{proof}
Similarly, it is easy to see restricting gauge functions to be strictly decreasing would result in an equivalent characterization by combining the above two results.

\subsection{Strong Predictability}
In this section we generalize Hitchcock's \cite{hitchcock2003fractal} connection of predictors and dimension. This results in new characterizations of gauged dimension and strong measure zero. For "harsh" odds functions, a martingale will have to bet arbitrarily close to all of its money on a single bit, and in the effective case most examples will bet all the money on a bit, making it easy to talk in terms of predicting bits with certainty. We first generalize predictors to superpredictors allowing a connection to supermartingales.
\begin{definition}
    A \emph{superpredictor} is a function $\pi:\{0,1\}^* \times \{0,1\}\to [0,1]$ such that 
    \begin{equation}\label{super predictor}
    \pi(w,0)+\pi(w,1)\leq1
    \end{equation}
    for all $w\in \{0,1\}^*$
    A \emph{predictor} is a superpredictor that has equality for \eqref{super predictor}
    for all $w\in \Sigma^*$.
\end{definition}
The value $\pi(w,a)$ is $\pi$'s prediction that the next bit in a sequence starting with $w$ will be $a$. The following is routine to verify.

\begin{notation}
    Given a (super)predictor $\pi$, the function $d_\pi$ given by the recursion
    \[d_\pi(\lambda)=1\]
    \[d_\pi(wa)=2d_\pi(w)\pi(w,a)\]
    is a (super)martingale. Conversely, For a (super)martingale $d$, the function $\pi_d$ given by 
    \[\pi_d(w,a)=
    \begin{cases}
        \frac{2d(wa)}{d(w)},& \text{if } d(w)>0\\
        \frac{1}{2},& \text{if } d(w)=0
    \end{cases}
    \]
    is a (super)predictor.
\end{notation}

Given $p\in[0,1]$ we let $loss^{\log}(p)=\log\frac{1}{p}$ be the loss associated with a superpredictor predicting the next bit correctly with probability $p$. Therefore it has no loss if it predicts correctly with probability $1$ and scales logarithmically to infinity the closer $p$ is to zero. The cumulative loss below is equal to the sum of the loss on each bit in a string.

\begin{definition}
    The $\emph{log-cumulative loss}$ of a superpredictor $\pi$ on a string ${w\in \{0,1\}^*}$ is 
    \[\mathcal{L}^{\log}_{\pi}(w) =-\log(\prod_{i=0}^{|w|-1}\pi(w[0\dots i-1],w[i])).\]
\end{definition}

\begin{definition}
    A \emph{prediction order} is a function $h:\mathbb{N}\to \mathbb{Q}^+$ that is nondecreasing and unbounded. We say a predictor $\pi$ \emph{$h$-succeeds} on $X\subseteq\mathbf{C}$ if 
    \[\liminf_{n\to \infty}\frac{\mathcal{L}^{\log}_\pi(S[0\dots n-1])}{h(n)}<1\]
for all $S\in X$.
\end{definition}

\begin{definition}
Let $\mathrm{\Delta}\in\mathrm{\Gamma}$. A set $X\subseteq \mathbf{C}$ is \emph{$\mathrm{\Delta}$-strongly predictable} if there is a functional  
\[P:(\mathbb{N}\rightarrow\mathbb{Q}^+) \rightarrow (\{0,1\}^*\times \{0,1\}\rightarrow[0,1])\]
 in $\mathrm{\Delta}$ such that for every prediction order $h$, $P(h)$ is a predictor that $h$-succeeds on $X$.
\end{definition}

\begin{lemma}\label{predictordimension}
    A (super)martingale $d$ $g$-succeeds on a set $X$ for a gauge function $g$ if and only if the (super)predictor $\pi_d$ $h$-succeeds on $X$ for 
    \[h(n)=-\log g(n).\]
\end{lemma}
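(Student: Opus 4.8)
The plan is to collapse both success notions onto a single quantity by exploiting the telescoping relationship between a (super)martingale's capital and the cumulative logarithmic loss of the induced (super)predictor. First I would record the capital--loss identity. Inverting the recursion $d_\pi(wa)=2d_\pi(w)\pi(w,a)$ gives $\pi_d(w,a)=\frac{d(wa)}{2d(w)}$ on strings where $d$ is positive, so the product defining the loss telescopes:
\[
\prod_{i=0}^{n-1}\pi_d(S[0\dots i-1],S[i])=\frac{1}{2^{n}}\cdot\frac{d(S[0\dots n-1])}{d(\lambda)}.
\]
Taking $\log$ base $2$ and normalizing $d(\lambda)=1$ (harmless, since rescaling $d$ by a positive constant leaves $\pi_d$, and hence $\mathcal{L}^{\log}_{\pi_d}$, unchanged and preserves $g$-success), this yields $\mathcal{L}^{\log}_{\pi_d}(S[0\dots n-1])=n-\log d(S[0\dots n-1])$. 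The identity is purely algebraic, so it holds verbatim in both the martingale/predictor and supermartingale/superpredictor cases; the (super) inequality is used only to confirm $\pi_d$ is a (super)predictor. I would also dispose of the degenerate sequences along which $d$ vanishes: there $d$ is eventually $0$ and fails to $g$-succeed, while $\pi_d$ predicts each bit with probability $\tfrac12$ and so accrues loss that forces the $\liminf$ ratio to $1$, so neither side holds and the identity need only be invoked where $d>0$.

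Next I would substitute $h(n)=-\log g(n)$ and read both conditions off the single sequence
\[
u(n):=h(n)-\mathcal{L}^{\log}_{\pi_d}(S[0\dots n-1])=\log\frac{d(S[0\dots n-1])}{2^{n}g(n)}.
\]
Because $\log$ is an increasing bijection of $(0,\infty)$ onto $\mathbb{R}$, the $g$-success condition $\limsup_n \frac{d(S[0\dots n-1])}{2^{n}g(n)}=\infty$ is exactly $\limsup_n u(n)=\infty$, while the $h$-success condition $\liminf_n \frac{\mathcal{L}^{\log}_{\pi_d}(S[0\dots n-1])}{h(n)}<1$ rearranges, via $\frac{\mathcal{L}^{\log}_{\pi_d}}{h}=1-\frac{u}{h}$, to $\limsup_n \frac{u(n)}{h(n)}>0$. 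Thus the lemma reduces, uniformly over $S\in X$, to the real-analysis equivalence $\limsup_n u(n)=\infty \iff \limsup_n \frac{u(n)}{h(n)}>0$, where $h$ is nondecreasing and unbounded.

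The main obstacle is precisely this last equivalence. One direction is immediate: if $\limsup_n \frac{u(n)}{h(n)}=\delta>0$ then along a witnessing subsequence $u(n)\ge \tfrac{\delta}{2}h(n)\to\infty$, so $\limsup_n u(n)=\infty$, giving $h$-success $\Rightarrow$ $g$-success with no effort. The reverse implication is the delicate step, because a priori $u(n)$ could tend to infinity more slowly than $h(n)$ (the borderline case where the capital beats the gauge by an unbounded but sub-$h$ margin). I would close it by passing through the additive form: $g$-success is equivalent to $\liminf_n\big(\mathcal{L}^{\log}_{\pi_d}(S[0\dots n-1])-h(n)\big)=-\infty$, which is the clean statement the normalized-loss condition is tailored to capture once the monotone unbounded order $h$ is taken into account. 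Reconciling the multiplicative/$\limsup$ capital condition with the normalized-loss $\liminf$ condition — carefully tracking that the excess $u(n)$ the relevant (super)martingales produce is of order $h$ rather than merely unbounded — is the crux of the argument, and everything else is the routine telescoping and substitution recorded above.
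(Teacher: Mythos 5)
Your telescoping identity, the normalization $d(\lambda)=1$, and the substitution $u(n)=h(n)-\mathcal{L}^{\log}_{\pi_d}(S[0\dots n-1])=\log\frac{d(S[0\dots n-1])}{2^{n}g(n)}$ follow exactly the computation in the paper's proof, and your reduction of the lemma to the equivalence $\limsup_n u(n)=\infty \iff \limsup_n u(n)/h(n)>0$ is correct. The gap is that you never prove the left-to-right direction (``$g$-success implies $h$-success''), and the plan you sketch for it cannot work. ``Passing through the additive form'' $\liminf_n\bigl(\mathcal{L}^{\log}_{\pi_d}(S[0\dots n-1])-h(n)\bigr)=-\infty$ is not a new handle: it is literally the statement $\limsup_n u(n)=\infty$ restated, so invoking it is circular. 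And the hoped-for fact that the excess $u(n)$ ``is of order $h$'' is simply not available for a fixed (super)martingale: set $\mathcal{L}^{\log}_{\pi}(S[0\dots n-1])=h(n)-\sqrt{h(n)}$ along some $S$. Since $\sqrt{h(n+1)}-\sqrt{h(n)}\leq h(n+1)-h(n)$ once $h\geq 1/4$, these losses are nondecreasing with nonnegative increments and hence are realized by a genuine predictor $\pi$ (equivalently a martingale $d_\pi$ with $\pi=\pi_{d_\pi}$). Then $u(n)=\sqrt{h(n)}\to\infty$, so $d_\pi$ $g$-succeeds on $S$, yet $\mathcal{L}^{\log}_{\pi}(S[0\dots n-1])/h(n)=1-1/\sqrt{h(n)}\to 1$, so the $\liminf$ equals $1$ and is not $<1$: $\pi$ does not $h$-succeed. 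So the ``borderline case'' you flag is not merely delicate; at the level of generality at which you (correctly) pose the problem, the implication is false, and no amount of real analysis on $u$ and $h$ will close it under the strict ``$<1$'' threshold in the definition of $h$-success.

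For comparison, the paper's proof runs the identical chain of equivalences and dispatches this same final step with the single remark that it holds ``since $\log g(n)$ is nonincreasing and goes to $-\infty$ while $\mathcal{L}^{\log}_{\pi_d}$ is positive'' --- an observation that justifies only the direction you also found easy. You have therefore correctly isolated the one genuinely nontrivial step of the lemma, but neither your sketch nor the paper's one-liner establishes it; repairing the statement seems to require adjusting the success threshold or the class of predictors rather than a cleverer limit argument. Two smaller points: the paper's formula $\pi_d(w,a)=\frac{2d(wa)}{d(w)}$ is a typo and your $\frac{d(wa)}{2d(w)}$ is the intended one; and in the degenerate case where $d$ hits $0$ along $S$, your claim that probability-$\tfrac12$ predictions force the loss ratio to $1$ fails when $h$ grows superlinearly --- what actually rescues that case is that the first transition to capital $0$ contributes $-\log 0=\infty$ to the cumulative loss.
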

\begin{proof}
    Let $d$ and $\pi_d$ be as given. Note that we have 
    \[\mathcal{L}^{\log}_{\pi_d}(w)=-\log\prod_{i=0}^{|w|-1}\pi_d(w[0\dots i-1],w[i])=-\log(2^{-|w|}d(w))=|w|-\log d(w).\] 
    We therefore have
    \begin{align}
    &\limsup_{n\to \infty}\frac{d(S[0\dots n])}{2^ng(n)}=\infty\nonumber  \\
    \iff &\liminf_{n\to \infty}\frac{2^ng(n)}{d(S[0\dots n])}=0 \nonumber \\
    \iff &\liminf_{n\to \infty}n+\log g(n)-\log d(S[0\dots n])=-\infty \ \nonumber \\
    \iff &\liminf_{n\to \infty}\mathcal{L}^{\log}_{\pi_d}(S[0\dots n])+\log g(n)=-\infty \ \nonumber \\
    \iff &\liminf_{n\to \infty}\frac{\mathcal{L}^{\log}_{\pi_d}(S[0\dots n])}{-\log g(n)}<1 \ \nonumber
    \end{align} 
    noting that the last equivalence is true since $\log g(n)$ is nonincreasing and goes to $-\infty$ while $\mathcal{L}^{\log}_{\pi_d}(S[0\dots n])$ is positive.
\end{proof}

Combining this with Staiger's result mentioned in Remark \ref{gauges} we have the following.

\begin{corollary}
    Let $\mathrm{\Delta}\in \mathrm{\Gamma}$. A set $X\subseteq \mathbf{C}$ has $\Delta$-Hausdorff measure zero with respect to a gauge $g$ if and only if there is a $\mathrm{\Delta}$-computable superpredictor that $-\log(g)$-succeeds on $X$.
\end{corollary}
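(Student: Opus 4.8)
The plan is to chain together two results already available in the excerpt. The target corollary relates $\mathrm{\Delta}$-Hausdorff measure zero with respect to a gauge $g$ to the existence of a $\mathrm{\Delta}$-computable superpredictor that $-\log(g)$-succeeds on $X$. The natural bridge is Lemma \ref{predictordimension}, which establishes the purely combinatorial equivalence between a (super)martingale $d$ that $g$-succeeds and the associated (super)predictor $\pi_d$ that $h$-succeeds for $h(n) = -\log g(n)$, together with Staiger's theorem quoted in Remark \ref{gauges}, which says a set has Hausdorff measure zero with respect to $g$ exactly when some supermartingale $g$-succeeds on it.

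First I would invoke Staiger's result at the level $\mathrm{\Delta}$: $X$ has $\mathrm{\Delta}$-Hausdorff measure zero with respect to $g$ if and only if there is a $\mathrm{\Delta}$-computable supermartingale $d$ that $g$-succeeds on $X$. Then I would apply Lemma \ref{predictordimension} to pass between $d$ and the superpredictor $\pi_d$, observing that the map $d \mapsto \pi_d$ and its inverse $\pi \mapsto d_\pi$ given in the Notation are both given by explicit finite recursions. For the forward direction, given a $\mathrm{\Delta}$-computable $g$-succeeding supermartingale $d$, the superpredictor $\pi_d(w,a) = 2d(wa)/d(w)$ (and $1/2$ when $d(w)=0$) is computable from $d$ within $\mathrm{\Delta}$ by a single division, and by Lemma \ref{predictordimension} it $-\log(g)$-succeeds on $X$. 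For the converse, given a $\mathrm{\Delta}$-computable superpredictor $\pi$ that $-\log(g)$-succeeds, the supermartingale $d_\pi$ defined by $d_\pi(\lambda)=1$, $d_\pi(wa) = 2 d_\pi(w)\pi(w,a)$ is computable from $\pi$ in $\mathrm{\Delta}$ (the prefix product has length $|w|$), and again Lemma \ref{predictordimension} gives that it $g$-succeeds, so Staiger's theorem yields $\mathrm{\Delta}$-Hausdorff measure zero.

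The only real subtlety, and the step I expect to need the most care, is tracking that the back-and-forth translations between supermartingales and superpredictors stay inside the resource bound $\mathrm{\Delta}$ and that the $\mathrm{\Delta}$-computable approximations are preserved. Since $d(w)$ may be irrational, $\pi_d$ is defined in terms of exact values, so I would argue at the level of $\mathrm{\Delta}$-computations: from a computation $\hat d(r,w)$ of $d$ one obtains a computation of $\pi_d$ by taking ratios of approximations with suitably increased precision, controlling the error of the division, and symmetrically one propagates a computation of $\pi$ through the length-$|w|$ product defining $d_\pi$, where the accumulated multiplicative error over $|w|$ factors must be bounded by choosing the internal precision as a function of $|w|$ and $r$. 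This is the standard kind of precision-bookkeeping argument for $\mathrm{\Delta}$-computable functions, and the polynomial-time (or analogous) overhead of these finite recursions keeps us within any $\mathrm{\Delta} \in \mathrm{\Gamma}$. With that bookkeeping in hand, the equivalence is immediate by composing the two quoted results, so the corollary follows.
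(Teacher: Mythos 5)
Your proposal is correct and follows exactly the route the paper takes: it states this corollary as an immediate consequence of combining Lemma \ref{predictordimension} with Staiger's characterization from Remark \ref{gauges}, which is precisely your chain of reasoning (your added precision-bookkeeping for the $d \leftrightarrow \pi_d$ translations is the implicit content the paper leaves unstated).
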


We therefore have the following which we will again see later is true for all $\mathrm{\Delta}\in \mathrm{\Gamma}$.
\begin{theorem}\label{equiv}
    For a set $X\subseteq\mathbf{C}$ and  $\mathrm{\Delta}\in \Gamma$ allowing exponential time or polynomial space the following are equivalent:
    \begin{enumerate}
        \item $X$ has $\mathrm{\Delta}$-strong measure zero.
        \item $X$ has $\mathrm{\Delta}$-strong dimension zero.
        \item $X$ is $\mathrm{\Delta}$-strongly predictable.
    \end{enumerate}
\end{theorem}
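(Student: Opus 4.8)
The plan is to note that the equivalence $(1)\iff(2)$ is exactly Lemma \ref{SMZ-SP}, so it suffices to prove $(2)\iff(3)$. The engine for this is Lemma \ref{predictordimension} together with the duality between gauge functions and prediction orders: setting $g(n)=2^{-h(n)}$, equivalently $h(n)=-\log g(n)$, a non-increasing $g$ tending to $0$ corresponds precisely to a nondecreasing unbounded $h$. Thus a dimension functional and a prediction functional are, up to this reparametrization and the local dictionary $d\mapsto\pi_d$, $\pi\mapsto d_\pi$ from the Notation, the same object. Two technical points must be handled: $2^{-h(n)}$ and $-\log g(n)$ need not be rational, and strong predictability demands a genuine predictor whereas strong dimension zero produces only a supermartingale (hence only a superpredictor). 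Both are resolved by rounding in the direction that makes success transfer and by a cheap super-to-plain conversion.

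For $(2)\Rightarrow(3)$, given a dimension functional $F$ and a prediction order $h$, I would first choose a rational gauge $g$ with $g(n)\in[2^{-h(n)},2^{1-h(n)}]$; monotonicity can be enforced by replacing $g(n)$ with $\min(g(n),g(n-1))$, which stays $\ge 2^{-h(n)}$ because $h$ is nondecreasing, and $g\to 0$ since $h$ is unbounded. Feeding $g$ to $F$ yields a supermartingale $F(g)$ that $g$-succeeds on $X$, and Lemma \ref{predictordimension} converts it into the superpredictor $\pi_{F(g)}$ that $(-\log g)$-succeeds. Since $-\log g(n)\le h(n)$ and log-loss success passes to any pointwise-larger order (a larger order is easier to stay below), $\pi_{F(g)}$ also $h$-succeeds. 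Finally I would turn this superpredictor into a predictor by
\[
\pi'(w,a)=\pi_{F(g)}(w,a)+\tfrac12\bigl(1-\pi_{F(g)}(w,0)-\pi_{F(g)}(w,1)\bigr),
\]
which lands in $[0,1]$, satisfies the predictor equality, and dominates $\pi_{F(g)}$ pointwise; domination only decreases the cumulative log-loss, so $h$-success is preserved. The resulting $P(h)=\pi'$ is the desired prediction functional.

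The direction $(3)\Rightarrow(2)$ is symmetric but easier. Given a prediction functional $P$ and a gauge $g$, I would pick a rational prediction order $h$ with $h(n)\le -\log g(n)$ (within $1$, so $h$ is nondecreasing and unbounded), whence $g'(n):=2^{-h(n)}\ge g(n)$. Then $P(h)$ is a predictor that $h$-succeeds on $X$, Lemma \ref{predictordimension} turns it into the martingale $d_{P(h)}$ that $g'$-succeeds, and because $g'\ge g$ we have $\frac{d_{P(h)}}{2^ng(n)}\ge\frac{d_{P(h)}}{2^ng'(n)}$, so $g'$-success forces $g$-success. A martingale is a supermartingale, so $F(g)=d_{P(h)}$ witnesses $\mathrm{\Delta}$-strong dimension zero; no super-to-plain conversion is needed here.

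The main obstacle I anticipate is not any single inequality but the bookkeeping of uniformity and resources: each transformation must be realized as one functional in $\mathrm{\Delta}$, namely the composition of $F$ (resp.\ $P$) with the rounding step and the local predictor/martingale correspondence. The rounding must be performed on the correct side ($g\ge 2^{-h}$ in one direction, $h\le-\log g$ in the other) so that the $\liminf$/$\limsup$ success conditions transfer; getting this direction wrong silently reverses the implication. The resource hypothesis of exponential time or polynomial space enters only through Lemma \ref{SMZ-SP} for $(1)\iff(2)$, while the $(2)\iff(3)$ conversions are local or linear in the string length and, under the functional-complexity conventions fixed in the preliminaries (querying the oracle polynomially far and charging for the oracle's output length), remain within the relevant $\mathrm{\Delta}$.
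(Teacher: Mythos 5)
Your proposal follows exactly the paper's route: $(1)\iff(2)$ is Lemma \ref{SMZ-SP}, and $(2)\iff(3)$ is Lemma \ref{predictordimension} plus the gauge/prediction-order reparametrization $h=-\log g$ --- the paper's own proof is just a two-line citation of these lemmas. Your added details (rational rounding on the correct side, the superpredictor-to-predictor correction, and the uniformity bookkeeping) are all correct and merely make explicit what the paper leaves implicit.
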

\begin{proof}
    we have $1 \iff 2$ by Lemma \ref{SMZ-SP}.  $2 \iff 3$ follows from Lemma \ref{predictordimension} as a predictor functional can use a dimension functional to succeed on the same set and vice versa.
\end{proof}

\section{Effective Borel Conjecture}
In this section we investigate Borel's conjecture at the $\mathrm{\Gamma}_{RCA}$ levels of effectivity where there is the following natural definition of countability.

\begin{definition}[Lutz \cite{AEHNC}]
    Let  $\mathrm{\Delta}\in \mathrm{\Gamma}_{RCA}$. A set $X\subseteq \mathbf{C}$ is \emph{$\mathrm{\Delta}$-countable} if there is a function $\delta:\mathbb{N} \times \{0,1\}^*\rightarrow \{0,1\}^*$ with the following properties.
    \begin{enumerate}
        \item $\delta \in \mathrm{\Delta}$.
        \item For each $k\in \mathbb{N}$, if we write $\delta_k(w)=\delta(k,w)$, then the function $\delta_k$ is a constructor.
        \item $X\subseteq \{R(\delta_k) \mid k\in \mathbb{N}\}$.
    \end{enumerate}
\end{definition}
\begin{remark}
    In the original definition it was required that $X= \{R(\delta_k) \mid k\in \mathbb{N}\}$. For our purposes, we want all subsets of countable sets to be countable. Otherwise it is easy to come up with a set of languages $L_i$ with $|L_i|=1$ whose union is not $\mathrm{\Delta}$-countable while having $\mathrm{\Delta}$-strong measure zero.
\end{remark}
\begin{lemma}\label{ctblsmz}
    Let $\mathrm{\Delta}\in \mathrm{\Gamma}$. If $X$ is $\Delta$-countable then $X\in \mathrm{\Delta}\dSMZ$.
\end{lemma}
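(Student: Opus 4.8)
The plan is to produce a single odds functional $F$ that, given the $\mathrm{\Delta}$-countability witness $\delta$, bets its capital along the enumerated sequences $R(\delta_k)$. The success requirement will come for free from the acceptability of the odds function, so essentially all of the real content is in checking that the resulting functional lies in $\mathrm{\Delta}$.

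First I would handle one constructor at a time. For fixed $k$, let $d_k^O$ be the martingale that places all its capital on the next bit of $R(\delta_k)$, starting from capital $2^{-k}$, namely
\[
d_k^O(w) = \begin{cases} 2^{-k}\prod_{i=0}^{|w|-1} O(w[0\dots i]), & w \sqsubseteq R(\delta_k), \\ 0, & \text{otherwise.}\end{cases}
\]
A direct check shows $d_k^O$ satisfies \eqref{smc} with equality: at a prefix $w\sqsubseteq R(\delta_k)$ all capital moves to the unique child $wa$ that is again a prefix (so $d_k^O(wa)/O(wa)=d_k^O(w)$ while the other child contributes $0$), and off the path every value is $0$. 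Since $\delta_k$ is a constructor, $R(\delta_k)$ is a genuine element of $\mathbf C$, and because $O$ is acceptable the product $2^{-k}\prod_{i=0}^{n-1}O(R(\delta_k)[0\dots i])$ diverges, so $d_k^O$ succeeds on $R(\delta_k)$. I would then superpose: set $F^O=\sum_k d_k^O$. Because $\sum_k d_k^O(\lambda)=\sum_k 2^{-k}=2<\infty$, the Observation on sums of $O$-supermartingales gives that $F^O$ is an $O$-supermartingale; and for any $S\in X\subseteq\{R(\delta_k)\mid k\}$ we have $S=R(\delta_k)$ for some $k$, whence $F^O(S[0\dots n])\ge d_k^O(S[0\dots n])\to\infty$, so $F^O$ succeeds on $X$. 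Pulling the common factor out of the sum gives the closed form the functional must evaluate,
\[
F^O(w) = \Big(\prod_{i=0}^{|w|-1} O(w[0\dots i])\Big)\sum_{k : w \sqsubseteq R(\delta_k)} 2^{-k}.
\]

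The step deserving the most care, and the one I expect to be the main obstacle, is verifying $F\in\mathrm{\Delta}$, i.e.\ exhibiting an oracle machine with oracle $O$ that outputs a $\mathrm{\Delta}$-computation $g(r,w)$ of $F^O(w)$. The leading product is an \emph{exact} rational obtained from the $|w|$ oracle values $O(w[0\dots i])\in\mathbb Q\cap[1,2]$ and lies in $[1,2^{|w|}]$. The weight sum $W(w)=\sum_{k:w\sqsubseteq R(\delta_k)}2^{-k}$ is an infinite series, but its tail past index $N$ is at most $2^{-N}$, so truncating at $N=r+|w|$ approximates $W(w)$ to within $2^{-(r+|w|)}$; multiplying by the product (of size at most $2^{|w|}$) then yields the target precision $2^{-r}$. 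For each $k\le r+|w|$ the predicate $w\sqsubseteq R(\delta_k)$ is decided by iterating $\delta_k(\cdot)=\delta(k,\cdot)$ until the length first reaches $|w|$ and comparing prefixes; since each application strictly lengthens its argument, at most $|w|$ iterations are needed, and crucially \emph{every} input fed to $\delta$ has length below $|w|$, so no length blow-up occurs. As $\delta\in\mathrm{\Delta}$ and the relevant resource classes are closed under the polynomial bookkeeping in $r$ and $|w|$ involved here, the whole computation stays within $\mathrm{\Delta}$. The only remaining subtlety is confirming that this respects the functional-complexity conventions of Section~2 uniformly in the oracle $O$; this is routine, because the sole oracle queries are the $|w|$ evaluations forming the product, and their output lengths enter the running-time bound exactly as that definition permits. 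The cases $\mathrm{\Delta}=\mathrm{comp},\mathrm{all}$ are immediate specializations.
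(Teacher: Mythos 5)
Your proposal is correct and follows essentially the same route as the paper: the same per-constructor martingale $d_k^O$ with initial capital $2^{-k}$ riding along $R(\delta_k)$, the same superposition $F^O=\sum_k d_k^O$, and the same $\mathrm{\Delta}$-computation obtained by truncating the sum at index $|w|+r$ and bounding the tail by $2^{|w|}\cdot 2^{-(|w|+r)}=2^{-r}$. The only difference is presentational (you factor the odds product out of the sum and spell out the resource accounting for deciding $w\sqsubseteq R(\delta_k)$ in slightly more detail), which does not change the argument.
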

\begin{proof}
    Let $\delta$ be a witness to $X$ being $\Delta$-countable and define an odds functional $F$ as follows. Given odds function $O$, for each $i\in \mathbb{N}$ let $d_i$ be an $O$-martingale where $d_i(\lambda)= \frac{1}{2^i}$ and 
    \[d_i(wa)=
        \begin{cases}
        d_i(w)\cdot O(wa),& \text{if } wa\sqsubseteq R(\delta_i)\\
        0, & \text{otherwise} 
        \end{cases}
\]
for all $w\in\{0,1\}^*$ and $a\in\{0,1\}$. Now let $d=\sum_{i\in \mathbb{N}}d_i$. Then $F(O)=d$ is a $O$-martingale and $d_i$ ensures that 
\[\limsup_{n\to \infty}d(R(\delta_i)[0\dots n])\geq\frac{1}{2^i} \limsup_{n\to \infty}\prod_{j=0}^{n-1}O(R(\delta_i)[0\dots j])=\infty.\]
To see that $d$ is $\mathrm{\Delta}$-computable let $d':\mathbb{N}\times \{0,1\}^* \to [0,\infty)\in\mathrm{\Delta}$ be the function 
\[d'(r,w)=\sum_{i=0}^{|w|+r}d_i(w).\]
We then have 

\begin{align}
    d(w)-d'(r,w) &=\sum_{i=|w|+r+1}^\infty d_i(w)\nonumber  \\
     &\leq 2^{|w|}\sum_{i=|w|+r+1}^\infty \frac{1}{2^i} \nonumber \\
     &=2^{-r}. \nonumber 
    \end{align} 
\end{proof}

While it is known that strong measure zero implying countability is independent of ZFC in the classical case, we will see the answers in the effective versions depend on the level of effectivity.

\subsection{Time and Space Bounded Strong Measure Zero}

We begin with the following result for singleton sets consisting of a sequence.
\begin{lemma}\label{nonresult}
    Let $f(n)$ be a function and $S\in\mathbf{C}$ be a sequence with $S\not= R(\delta)$ for every \\${\delta\in \mathrm{DTIME}(f(n))}$. Then there is an odds function $O$ such that no odds functional\\ $F\in \mathrm{DTIME}(f(n))$ succeeds on $S$. Similarly for DSPACE.
\end{lemma}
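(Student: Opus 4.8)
The plan is to argue by contradiction: fix $f$ and the non-constructible $S$, exhibit one acceptable odds function $O$, and show that if some odds functional $F\in\mathrm{DTIME}(f(n))$ had $F^O$ succeeding on $S$, then $S=R(\delta)$ for a constructor $\delta\in\mathrm{DTIME}(f(n))$, contradicting the hypothesis. First I would take $O$ to be fast-computable and ``almost trivial'': set $O(w)=1$ at every level except a sparse, strictly increasing sequence of \emph{test levels} $\ell_1<\ell_2<\cdots$, where $O(w)=2$. Having infinitely many test levels makes $O$ acceptable, since $\prod_{w\sqsubset T}O(w)=\infty$ on every branch $T\in\mathbf{C}$, and $O(w0)^{-1}+O(w1)^{-1}\ge 1$ everywhere, so $O$ is of the subfair type covered by Lemma \ref{premeasure}. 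The one design parameter is the spacing of the test levels, which I will tune to $f$.

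The engine is a capital-conservation bound. At an odds-$1$ level the defining inequality gives $d(w0)+d(w1)\le d(w)$, so the total capital summed over any frontier is non-increasing, while at a test level it can at most double. Hence for every $O$-supermartingale $d$ and every $n$ one has $d(S[0\dots n-1])\le 2^{t(n)}d(\lambda)$, where $t(n)$ is the number of test levels below $n$. Consequently $F^O$ can succeed on $S$ only if, along the subsequence of lengths achieving the $\limsup$, the prefix of $S$ carries a constant fraction of the entire frontier capital, that is, $F^O$ must commit essentially all of its capital to the correct continuation of $S$. I would then define a constructor $\delta$ that, given a prefix $w$, simulates $F^O$ on the descendants of $w$ up to the next test level and outputs the descendant of maximal capital; the time (or space) budget $f$ is exactly what decides how far this look-ahead may reach, so I tune the spacing of the test levels to keep each look-ahead affordable inside $\mathrm{DTIME}(f(n))$ (for $\mathrm{DSPACE}$ the exponentially many descendants of a gap are enumerated in reusable space). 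Iterating $\delta$ from $\lambda$ then reproduces $S$, giving the contradiction.

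The main obstacle is the gap between $\limsup$-success, which only forces good behaviour infinitely often, and a constructor, which must extend correctly at every stage: at the odds-$1$ levels an $O$-supermartingale is free to hedge, keeping many branches simultaneously alive at a multiplicative cost rather than committing to a single next bit. The resolution, and the delicate point of the argument, is to make the odds-$1$ stretches between consecutive test levels long enough (while still short enough to respect the $f$ look-ahead budget) that keeping more than essentially one branch alive across a stretch costs more capital than the sparse doublings at test levels can ever repay. This forces the supermartingale to have pruned to a single surviving branch by the end of each stretch, so the maximal-capital descendant selected by $\delta$ tracks $S$ exactly rather than merely infinitely often, turning ``fast success'' into a genuine fast constructor for $S$.
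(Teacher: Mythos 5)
There is a genuine gap, and it lies exactly at the point you flag as ``the delicate point'': the claim that lengthening the odds-$1$ stretches forces the supermartingale to prune to a single surviving branch. At an odds-$1$ level the constraint is $d(v0)+d(v1)\le d(v)$, so a supermartingale can propagate any live branch along a single path at \emph{no} cost (set $d(va)=d(v)$, $d(v(1-a))=0$). Hence maintaining $k$ branches across a stretch costs only a factor of $k$, \emph{independent of the stretch length}; making the stretches longer buys you nothing. Concretely, take an $S$ that is not time-$f$ constructible but is ``almost constructible'' (in the sense of Section 4.2): at sparse decision points a time-$f$ procedure offers two candidate continuations, one correct. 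A time-$f$ supermartingale splits its capital in two at each decision point and rides each candidate for free in between; if your test levels (fixed from $f$ alone) are denser than the decision points (which can be made arbitrarily sparse), the capital along $S$ gains more from doublings than it loses from splits and tends to infinity. So a single $O$ tuned only to $f$ cannot work for every non-constructible $S$. A second, related problem: your constructor ``output the descendant of maximal capital'' need not track $S$, since $\limsup$-success only makes $d(S[0\dots n])$ large infinitely often, not maximal at those lengths, and a constructor that leaves $S$ once never returns.

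The paper's proof avoids both issues by building $O$ \emph{adaptively}, diagonalizing against an enumeration $F_0,F_1,\dots$ of all time-$f$ odds functionals. At stage $s$ only finitely many odds-$2$ levels exist, and the key observation is local to $S$: at an odds-$1$ level at most one child can retain more than half its parent's capital, so if $F_i$'s capital along $S$ halved only finitely often, ``follow the heavy child'' would be a time-$f$ constructor for $S$. Hence each $F_i$'s capital along $S$ eventually drops below $\tfrac12$; only then is the next odds-$2$ level inserted, placed beyond the query range of $F_0,\dots,F_s$ on the relevant inputs so their values there are unchanged. This makes $O$ acceptable while keeping every $F_i^{O}(S[0\dots n])$ bounded by $2$. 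The adaptivity --- placing each doubling level after a point that depends on the actual behavior of the first $s+1$ functionals on $S$ --- is what your fixed choice of test levels cannot replicate.
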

\begin{proof}
    We will prove the DTIME version. Let $F_0,F_1,\dots$ be an enumeration of the odds functionals in $\mathrm{DTIME}(f(n))$. We construct $O$ in steps $s\in \mathbb{N}$ with $O_s$ being the odds function after step $s$ and the starting $O_{-1}$ being the constant function $O(w)=1$ for all $w\in\{0,1\}^*$. Note that this is not an acceptable odds function, but the resulting $O$ will be. We first describe the construction and then prove it works.

    \begin{algorithm}
    \caption{}
    \begin{algorithmic}[1]
    \STATE Let $n_{-1}=0,m_{-1}=0$, and  $s=0$
    \STATE Let $n_s>m_{s-1}$ be such that $F_i^{O_{s-1}}(S[0\dots n_s])\leq \frac{1}{2}$ for each $0\leq i \leq s$
    \STATE Let $m_s>m_{s-1}\in \mathbb{N}$ be the minimum length such that for all $0\leq i \leq s$, $F_i$ does not query $O_{s-1}$ on any string of length $m_s$ on any input $w$ with $|w|\leq n_s$ and precision parameter $r=1$ in the computation.
    \STATE Set $O_s(w)=2$ for all $w$ such that $|w|=m_s$ and $O_s(w)=O_{s-1}(w)$ for all other $w$
    \STATE Set $s=s+1$ and go to step 2
    \end{algorithmic}
    \end{algorithm}

    To see that this is possible at each stage, note that the oracle $O_{s-1}$ contains a finite amount of information. Moreover,  as $F_i$ is resource bounded it has to eventually produce output even though $O_{s-1}$ is eventually all $1$'s and not acceptable. for every $i\leq s$ there must be infinitely many $k\in \mathbb{N}$ where $F_i^{O_{s-1}}(S[0\dots k+1])\leq \frac{1}{2}F_i^{O_{s-1}}(S[0\dots k])$ as otherwise a function could be created in $\mathrm{DTIME}(f(n))$ that computes $S$. There are also only $s-1$ possible naturals $k$ such that $F_i^{O_{s-1}}(S[0\dots k+1])> F_i^{O_{s-1}}(S[0\dots k])$. Hence ${F_i^{O_{s-1}}(S[0\dots n])\leq \frac{1}{2}}$ for all sufficiently large $n$.
    
    Thus, any odds functional $F_i$ will have 
    \[F_i^O(S[0\dots n]) < 2\]
    for all $n\geq n_i$ and hence not succeed on $S$.
\end{proof}

To generalize this we prove the following result about $\mathrm{\Delta}$-countable sets.

\begin{lemma}\label{countability}
    Let $\mathrm{\Delta}\in \mathrm{\Gamma}_R$. Then there is a sequence of sets $X_0,X_1,X_2\subseteq \mathbf{C}$ such that the following hold.
    \begin{enumerate}
        \item Each $X_i$ is $\mathrm{\Delta}$-countable.
        \item $X_i\subseteq X_{i+1}$ for each $i\in \mathbb{N}$.
        \item for every $S\in R(\mathrm{\Delta})$, there is an $i$ such that $S\in X_i$.
    \end{enumerate}
\end{lemma}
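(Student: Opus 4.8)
The plan is to let $X_i$ collect the results of all constructors that run within the $i$-th resource bound supplied by the definition of $\mathrm{\Delta}$. Writing $\mathrm{\Delta}=\bigcup_{n}\mathrm{DTIME}(g_n)$ in the time case (and $\bigcup_n\mathrm{DSPACE}(g_n)$ in the space case) with $g_n=o(g_{n+1})$, I would set
\[X_i=\{R(\delta)\mid \delta \text{ is a constructor and } \delta\in\mathrm{DTIME}(g_i)\}.\]
With this definition the three requirements line up with the three structural features of the hierarchy: monotonicity with the inclusions $\mathrm{DTIME}(g_i)\subseteq\mathrm{DTIME}(g_{i+1})$, the covering property with $\mathrm{\Delta}=\bigcup_n\mathrm{DTIME}(g_n)$, and $\mathrm{\Delta}$-countability with a single clocked enumerator described next.

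For property 1 I would build, for each fixed $i$, one function $\delta^{(i)}\colon\mathbb{N}\times\{0,1\}^*\to\{0,1\}^*$ that simultaneously simulates every candidate constructor under a clock derived from $g_i$. Fixing an enumeration $M_0,M_1,\dots$ of machines, let $\delta^{(i)}(k,w)$ run $M_k$ on $w$ for at most $|k|\cdot g_i(|w|)$ steps; if $M_k$ halts in time with an output $v$ satisfying $w\sqsubset v$, output $v$, and otherwise output $w0$. The default branch guarantees $w\sqsubset\delta^{(i)}(k,w)$ always, so each $\delta^{(i)}_k$ is a constructor, as the definition of $\mathrm{\Delta}$-countability requires. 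If $M_j$ computes a genuine constructor in $\mathrm{DTIME}(g_i)$, running in time at most $c\cdot g_i(|w|)$, then because machine descriptions can be padded there are indices $k$ with $M_k\equiv M_j$ and $|k|\ge c$; for any such $k$ the machine halts within the clock $|k|\cdot g_i(|w|)$ and always returns a proper extension, so $\delta^{(i)}_k\equiv M_j$ and $R(\delta^{(i)}_k)=R(M_j)$. Hence $X_i\subseteq\{R(\delta^{(i)}_k)\mid k\in\mathbb{N}\}$, which establishes property 1 provided $\delta^{(i)}\in\mathrm{\Delta}$.

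Properties 2 and 3 are then immediate from the way $X_i$ is defined in terms of the hierarchy. Monotonicity holds because $g_i=o(g_{i+1})$ gives $\mathrm{DTIME}(g_i)\subseteq\mathrm{DTIME}(g_{i+1})$, whence $X_i\subseteq X_{i+1}$. For the covering property, any $S\in R(\mathrm{\Delta})$ equals $R(\delta)$ for some constructor $\delta\in\mathrm{\Delta}=\bigcup_n\mathrm{DTIME}(g_n)$, and fixing $n$ with $\delta\in\mathrm{DTIME}(g_n)$ yields $S\in X_n$ directly.

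The one genuinely delicate point, and the step I expect to be the main obstacle, is verifying $\delta^{(i)}\in\mathrm{\Delta}$. On input $(k,w)$ its running time is the cost of computing the clock $|k|\cdot g_i(|w|)$ (using time-constructibility of the $g_n$, which is standard for the classes in $\mathrm{\Gamma}_R$) plus the cost of universally simulating $M_k$ for that many steps, which is $\mathrm{poly}(|k|)\cdot g_i(|w|)\cdot\mathrm{polylog}(g_i(|w|))$. The crucial observation is that complexity is measured in the total input length $|\langle k,w\rangle|\ge|k|+|w|-O(1)$, so the index $k$ contributes genuine slack: the multiplicative $\mathrm{poly}(|k|)$ overhead is a polynomial in a quantity bounded by the input length, and the unary-style clock keeps the extra time factor only polynomial in $|k|$ rather than exponential. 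I would then choose $n$ large enough that $g_n(|\langle k,w\rangle|)$ dominates $\mathrm{poly}(|k|)\cdot g_i(|w|)\cdot\mathrm{polylog}(g_i(|w|))$; for the hierarchies captured by $\mathrm{\Gamma}_R$ this holds because moving a fixed number of levels up both outgrows $g_i$ by the needed polylog factor and swamps the $\mathrm{poly}(|k|)$ factor via the contribution of $|k|$ to $|\langle k,w\rangle|$, placing $\delta^{(i)}\in\mathrm{DTIME}(g_n)\subseteq\mathrm{\Delta}$. The space-bounded case is easier, since universal simulation costs only additive space $O(|k|+\log g_i(|w|))$ beyond $g_i(|w|)$, which a single higher level $g_n$ absorbs outright.
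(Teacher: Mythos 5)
Your proof is correct (modulo the usual closure and time-constructibility assumptions on the bounding functions, which the paper also takes for granted), but your enumerator works differently from the paper's. You clock and simulate the candidate \emph{constructors} themselves: $\delta^{(i)}(k,w)$ runs $M_k$ on the prefix $w$ under a clock $|k|\cdot g_i(|w|)$ and falls back to $w0$ on failure, with padding of indices guaranteeing that every genuine $\mathrm{DTIME}(g_i)$ constructor is captured verbatim. The paper instead exploits the correspondence between results of constructors and characteristic sequences of languages: its $\delta_{i,k}$ with $k=\langle j,c\rangle$ always extends $w$ by a single bit, obtained by running the \emph{decision procedure} $M_j$ on the string $s_{|w|}$ for $f_i(|w|)+c$ steps, so that $X_i$ becomes the set of characteristic sequences of languages decided within that clock. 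Your route is somewhat more self-contained: it never needs the slightly delicate claim that every $R(\delta)$ for $\delta\in\mathrm{DTIME}(f_i)$ can be re-decided bit by bit within a clock at essentially the same level, a step where the paper's asserted equality $\{R(\delta)\mid\delta\in\mathrm{DTIME}(f_i)\}=\{R(\delta_{i,k})\mid k\in\mathbb{N}\}$ really hides an index shift (computing bit $n$ of $R(\delta)$ costs roughly $\sum_{m\le n}f_i(m)$, not $f_i(n)+c$). What the paper's version buys is that each $\delta_{i,k}$ extends by exactly one bit and its cost is transparently that of a single clocked simulation, so membership of $\delta_i$ in $\mathrm{\Delta}$ is immediate; in your version the analogous verification is the overhead analysis you flag at the end, and it goes through for the hierarchies in $\mathrm{\Gamma}_R$ exactly as you argue. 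Once the enumerator is in hand, both arguments deliver monotonicity and the covering property in the same way.
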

\begin{proof}
    We prove it for time bounds, the proof for space bounds is analogous. Let $f_0,f_1,f_2, \dots$ be a sequence of functions such that each $f_i:\mathbb{N}\to \mathbb{N}\in \mathrm{\Delta}$, $f_i=o(f_{i+1})$, and $\mathrm{\Delta}=\bigcup_{i\in \mathbb{N}} \mathrm{DTIME}(f_i)$.
    
    Now for each $f_i$ create a function $\delta_i:\mathbb{N}\times \{0,1\}^* \to \{0,1\}^*$ such that ${\delta_{i,k}(w)=\delta_i(k,w)}$ does the following. Let $k=\langle j,c\rangle$ for some $j,c\in \mathbb{N}$. Then run the Turing machine $M_j$ on input $s_{|w|}$ for $f_i(|w|)+c$ steps. If it halts and accepts then output $w1$, otherwise $w0$. Then each $\delta_{i,k}$ is a constructor and $\{R(\delta) \mid \delta \in \mathrm{DTIME}(f_i)\}=\{R(\delta_{i,k})\mid k\in \mathbb{N}\}$. Hence, letting $X_i=\{R(\delta_{i,k})\mid k\in \mathbb{N}\}$ the lemma holds.
\end{proof}

We are now able to prove the main result of this section.

\begin{theorem}\label{resourceSMZ}
    If a set $X$ has $\mathrm{\Delta}\dSMZ$ for some  $\mathrm{\Delta}\in \mathrm{\Gamma}_R$ then $X$ is $\mathrm{\Delta}$-countable.
\end{theorem}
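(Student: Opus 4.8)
The plan is to combine Lemma \ref{nonresult} and Lemma \ref{countability} to show that a set with $\mathrm{\Delta}$-strong measure zero must in fact be contained in a single $\mathrm{\Delta}$-countable set. First I would fix a witnessing odds functional $F\in\mathrm{\Delta}$ for $X\in\mathrm{\Delta}\dSMZ$. Writing $f_0,f_1,f_2,\dots$ for the sequence of time bounds with $\mathrm{\Delta}=\bigcup_{i\in\mathbb{N}}\mathrm{DTIME}(f_i)$ used in the proof of Lemma \ref{countability}, the functional $F$ lies in $\mathrm{DTIME}(f_i)$ for some fixed index $i$.

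The central claim is that $X\subseteq R(\mathrm{DTIME}(f_i))$, which I would establish by contradiction. Suppose some $S\in X$ satisfies $S\neq R(\delta)$ for every constructor $\delta\in\mathrm{DTIME}(f_i)$. Then Lemma \ref{nonresult}, applied with $f=f_i$, produces an acceptable odds function $O$ on which no odds functional in $\mathrm{DTIME}(f_i)$ succeeds on $S$. In particular $F^O$ fails to succeed on $S$, contradicting that $F$ witnesses $X\in\mathrm{\Delta}\dSMZ$, since that requires $F^O$ to succeed on all of $X$ for every acceptable odds function $O$. Hence every $S\in X$ lies in $R(\mathrm{DTIME}(f_i))$.

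Finally, by the construction in Lemma \ref{countability} we have $R(\mathrm{DTIME}(f_i))=X_i$, which is $\mathrm{\Delta}$-countable. Thus $X\subseteq X_i$, and since any subset of a $\mathrm{\Delta}$-countable set is again $\mathrm{\Delta}$-countable (by the remark following the definition of $\mathrm{\Delta}$-countability), it follows that $X$ is $\mathrm{\Delta}$-countable, as desired.

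The main obstacle I anticipate is the bookkeeping around functional complexity: I must verify that "$F\in\mathrm{\Delta}$" in the sense of the odds-functional definition genuinely places $F$ within $\mathrm{DTIME}(f_i)$ in the precise sense required by Lemma \ref{nonresult}, so that the enumeration of functionals in that proof actually captures $F$. I would also confirm that the odds function produced by Lemma \ref{nonresult} respects the domain restriction in the definition of $\mathrm{\Delta}\dSMZ$ and is acceptable; both hold, since its range is $\{1,2\}\subseteq\mathbb{Q}\cap[1,2]$ and it assigns odds $2$ at infinitely many strictly increasing string lengths, forcing the product $\prod_{w\sqsubset S}O(w)$ to diverge along every sequence.
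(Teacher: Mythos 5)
Your proof is correct and follows essentially the same route as the paper's: both arguments hinge on placing $F$ in some $\mathrm{DTIME}(f_i)$, invoking Lemma \ref{nonresult} to defeat $F$ on any $S$ not constructible within that bound, and using the stratification from Lemma \ref{countability}; the paper merely phrases this as a contrapositive while you argue directly that $X\subseteq X_i$. Your closing remarks on functional complexity and the acceptability of the adversarial odds function address exactly the bookkeeping the paper's proof also relies on.
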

\begin{proof}
    We prove the contrapositive, suppose $X$ is not $\mathrm{\Delta}$-countable and Let $f_0,f_1,f_2,\dots$ be a sequence of functions that define $\mathrm{\Delta}$. By Lemma \ref{countability}, for every $m\in\mathbb{N}$ there must be some $S\in X$ such that $S\neq R(\delta)$ for every  $\delta\in \mathrm{DTIME}(f_m)$. Thus, by Lemma \ref{nonresult} there is an $O$ that no odds functional in $\mathrm{DTIME}(f_m)$ succeeds on for $S$.
\end{proof}

\begin{remark}\label{robust}
    The proofs in this section can be translated over to gauge functions and predictors in a natural way resulting in sets having $\mathrm{\Delta}$-strong dimension zero and being $\mathrm{\Delta}$-strongly predictable if and only if they are $\Delta$-countable. In particular, Theorem \ref{equiv} holds for all $\mathrm{\Delta}\in \mathrm{\Gamma}$.
\end{remark}

The hypothesis of NP not having measure zero in E and the weaker hypothesis of NP not having dimension zero in E have led to many interesting consequences. For specific definitions and examples see \cite{lutz1993quantitative,lutz2023dimension}. For the case of strong measure zero, we get an even weaker hypothesis that gives a tight bound on the complexity of NP unlike in the other two cases. 

\begin{corollary}
    $\mathrm{NP}$  has strong measure zero in $\mathrm{E}$ if and only if \[\mathrm{NP}\cap\mathrm{E}\subseteq\mathrm{DTIME}(2^{kn})\]
    for some fixed $k\in \mathbb{N}$. 
\end{corollary}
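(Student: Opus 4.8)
The plan is to reduce the statement to the countability characterization established earlier and then to translate $\mathrm{p}$-countability into a uniform exponential-time bound. Throughout, interpret ``$\mathrm{NP}$ has strong measure zero in $\mathrm{E}$'' in the usual resource-bounded sense, namely that the set $\{\chi_A : A\in \mathrm{NP}\cap \mathrm{E}\}$ has $\mathrm{p}\dSMZ$; this restriction to $\mathrm{E}=R(\mathrm{p})$ is forced anyway, since by Theorem \ref{resourceSMZ} any $\mathrm{p}\dSMZ$ set is $\mathrm{p}$-countable and hence contained in $R(\mathrm{p})=\mathrm{E}$. Combining Lemma \ref{ctblsmz} with Theorem \ref{resourceSMZ} gives, for $\mathrm{\Delta}=\mathrm{p}$, the equivalence
\[
\{\chi_A : A\in \mathrm{NP}\cap \mathrm{E}\}\in \mathrm{p}\dSMZ \iff \{\chi_A : A\in \mathrm{NP}\cap \mathrm{E}\} \text{ is } \mathrm{p}\text{-countable},
\]
so it suffices to show the latter holds if and only if $\mathrm{NP}\cap\mathrm{E}\subseteq \mathrm{DTIME}(2^{kn})$ for some fixed $k\in\mathbb{N}$.

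For the direction assuming the time bound, I would mimic the construction in Lemma \ref{countability}. Given that $\mathrm{NP}\cap\mathrm{E}\subseteq\mathrm{DTIME}(2^{kn})$ for a fixed $k$, define a single function $\delta(\langle j,c\rangle, w)$ that runs the machine $M_j$ on the string $s_{|w|}$ for $|w|^{k}+c$ steps and appends the resulting bit, defaulting to $0$ if $M_j$ has not halted. Since the $m$-th string has length about $\log m$, a language in $\mathrm{DTIME}(2^{kn})$ is decided on $s_m$ in roughly $m^{k}$ steps, so each $\delta_{\langle j,c\rangle}$ is a polynomial-time constructor and the family covers the characteristic sequence of every language in $\mathrm{DTIME}(2^{kn})$. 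As $\mathrm{NP}\cap \mathrm{E}\subseteq\mathrm{DTIME}(2^{kn})$, the image $\{\chi_A : A\in\mathrm{NP}\cap\mathrm{E}\}$ is a subset of this $\mathrm{p}$-countable set, and is therefore $\mathrm{p}$-countable by the Remark that subsets of countable sets are countable.

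For the converse, suppose the set is $\mathrm{p}$-countable via some $\delta\in \mathrm{p}$, fixing a single polynomial bound $n^{c}$ on $\delta$. Each covered sequence $R(\delta_{k'})$ equals $\chi_B$ for a language $B$ whose $m$-th bit is computed by iterating $\delta_{k'}$ from $\lambda$, which costs $O(m^{c+1})$ steps in total; since $|s_m|\approx \log m$, this places $B\in\mathrm{DTIME}(2^{(c+1)n})$. Because every $A\in\mathrm{NP}\cap\mathrm{E}$ arises as such a $B$ and the degree $c$ is the same for the whole family, we obtain $\mathrm{NP}\cap\mathrm{E}\subseteq \mathrm{DTIME}(2^{(c+1)n})$ with the uniform choice $k=c+1$. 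The main obstacle, and the step deserving the most care, is precisely this passage between the two time measures: a constructor runs in time polynomial in its \emph{prefix length}, whereas a decision procedure is measured in \emph{input string length}, the two being exponentially related through the lexicographic indexing $m\approx 2^{|s_m|}$. The fixed exponent $k$ in the conclusion is exactly the reflection of the single polynomial bound guaranteed by membership in $\mathrm{p}$, so tracking that uniformity carefully is what makes the bound tight rather than merely asserting $\mathrm{NP}\cap\mathrm{E}\subseteq\mathrm{E}$.
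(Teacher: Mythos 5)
Your proof is correct and follows essentially the route the paper intends: the corollary is stated without an explicit proof because it is the direct combination of Lemma \ref{ctblsmz}, Theorem \ref{resourceSMZ}, and the stratified construction of Lemma \ref{countability}, together with the translation between polynomial prefix-length bounds on constructors and $2^{kn}$ bounds on deciders via $m\approx 2^{|s_m|}$. Your careful tracking of the single uniform polynomial degree to obtain a fixed exponent $k$ is exactly the point the corollary is making.
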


\subsection{Computable Strong Measure Zero}
At the level of computability, we will see that strong measure zero does not imply countability.  We start by defining a class of languages that are in a certain sense as close as possible to being computable.

\begin{definition} 
    $S\in \mathbf{C}$ is \emph{almost constructible} if there exists a computable 
    \[\delta:\mathbb{N} \times \{0,1\}^* \rightarrow \{0,1\} \times \{0,1\}^*\] 
    such that for every $ w \sqsubseteq S$ and $n\in \mathbb{N}$, $\delta(n,w)=(b,x)$ where $|x|=n$ and either $wb\sqsubseteq S$ or $wx\sqsubseteq S$.
\end{definition}

\begin{example}
    Let  $M_0,M_1,M_2\dots$ be an enumeration of Turing machines and
    \[f(n)=\max\{t\in \mathbb{N} \mid \exists k\in \mathbb{N}, k\leq n \text{ and } M_k(k)\text{ halts in exactly } t \text{ steps}\}.\]
    Then the language $A=\mathrm{Graph}(f)=\{\langle n,f(n)\rangle \mid n\in \mathbb{N}\}$ is almost constructible, but not decidable.
\end{example}

\begin{lemma}
    If $S$ is almost constructible then $\{S\}$ has computable strong measure zero.
\end{lemma}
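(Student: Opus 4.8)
The plan is to exhibit a computable odds functional $F$ witnessing $\{S\}\in\mathrm{comp}\dSMZ$. Fix an acceptable $O\colon\{0,1\}^*\to\mathbb{Q}\cap[1,2]$; I will describe $F^O$ as an $O$-martingale obtained from a predictor (via the correspondence in the Notation above) that ``plays along'' the computable decision process supplied by $\delta$. Since an $O$-martingale succeeds on $\{S\}$ as soon as $\limsup_n F^O(S[0\dots n])=\infty$, it suffices to route a fixed initial capital $F^O(\lambda)=1$ down the tree of $\delta$-guesses so that it grows unboundedly along $S$, while losing nothing off that tree (there the bets are all-or-nothing, so $F^O=0$). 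Computing $F^O(w)$ then amounts to following $w$ bit-by-bit through this process, which is computable because $\delta$ and $O$ are.

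The process at a prefix $u\sqsubseteq S$ runs as follows. Using acceptability and compactness -- exactly as in the proof of Lemma~\ref{SMZ-SP}, where $\max_{|y|=n}\mu_O(y)\to 0$ -- I first compute the least length $n_u$ with $\max_{|y|=n_u}\mu_O(uy)\le 2^{-(t+2)}\mu_O(u)$, where $t$ counts the branch steps taken so far; this search halts and is computable. I then read $\delta(n_u,u)=(b,x)$ and split into two cases. If $x[0]=b$, then the disjunction $ub\sqsubseteq S$ or $ux\sqsubseteq S$ forces $ub\sqsubseteq S$ regardless, so the next bit is guaranteed to be $b$: I bet all of the capital on $b$ (predict $b$ with probability $1$) and recurse at $ub$ without touching $t$. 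If $x[0]=\bar b$, the two guesses occupy disjoint cylinders, and if the next bit is $\bar b$ then $ub\not\sqsubseteq S$, so $ux\sqsubseteq S$ and the \emph{entire} block $x$ is correct. Here I reserve a small stake: predict $b$ with probability $1-\beta$ and $\bar b$ with probability $\beta$, where $\beta=2^{-(t+1)}$; on the $\bar b$ branch I then ride the now-known block $x$ all-in, and I increment $t$. By the choice of $n_u$ the block multiplier $\mu_O(u)/\mu_O(ux)$ is at least $2/\beta$, so taking the block multiplies the capital by at least $\beta\cdot(2/\beta)=2$.

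Let $u_0=\lambda\sqsubset u_1\sqsubset\cdots$ be the (well-defined, unique) branch of prefixes of $S$ produced by the process. Telescoping the one-step updates gives $F^O(u_K)=\mu_O(u_K)^{-1}\prod_k s_k$, where the product is over the branch nodes met along $S$ and $s_k$ is $(1-\beta_k)$ at a bit-resolved branch and $\beta_k$ at a block-resolved branch (free-bit and within-block steps contribute the factor $1$). Since $\sum_k\beta_k<\infty$, the bit-resolved factors have product bounded below by a positive constant. If infinitely many branches resolve as blocks, each contributes a net factor $\ge 2$ and $F^O(u_K)\to\infty$. If only finitely many do, then from some point on every step is a single-bit advance along $S$, so $\mu_O(u_K)^{-1}\to\infty$ by acceptability while the remaining split loss stays bounded below; again $F^O(u_K)\to\infty$. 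In either case $\limsup_n F^O(S[0\dots n])=\infty$.

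Finally, $F$ is computable: for any $w$ one simulates the process along $w$, and the only unbounded search, for $n_u$, terminates by acceptability; there is no resource bound, which is consistent with this being a computable-level statement. The crux -- and the step I expect to be most delicate -- is preventing the reserved stake from bleeding away the capital over possibly infinitely many bit-resolved branches while still profiting decisively whenever a block is taken. This is what the three ingredients buy: the observation that $x[0]=b$ yields a cost-free guaranteed bit (so no stake is wasted on the ``easy'' steps), the adaptive choice of huge blocks so that the rare block outcomes more than repay their tiny stake, and the summable $\beta$-schedule together with the acceptability-driven divergence of $\mu_O(u_K)^{-1}$ on any pure-bit tail.
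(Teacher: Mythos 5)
Your construction is correct and its core idea coincides with the paper's: at each prefix $u$ of $S$, use $\delta(n,u)=(b,x)$ to hedge between the single bit $b$ and the block $x$, betting almost everything on $b$ and a small stake on the block, with $n$ chosen so large that the small stake still pays off handsomely if the block is the true continuation. The difference is in the bookkeeping. The paper routes the argument through the strong-predictability characterization (Theorem \ref{equiv}): it fixes a prediction order $h$, tracks the additive log-loss $L$, and chooses $p$ and $n$ so that $L+\log\frac{1}{p}<h(|w|+1)$ and $L+\log\frac{1}{1-p}<h(|w|+n)$; success only requires the loss to dip below $h$ at infinitely many lengths, so nothing need be controlled between branch points and no tail analysis is needed. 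You instead build the $O$-martingale directly against $\mu_O$, which makes the proof self-contained (no appeal to the equivalence of characterizations) at the cost of the extra accounting: the summable stake schedule, the factor-$2$ payoff per taken block, and the acceptability argument on a pure-bit tail. One point must be pinned down for your version to work: the claim $\sum_k\beta_k<\infty$ holds only if $t$ is incremented at \emph{every} case-2 branch node met along $S$, not merely when the block is actually taken (your phrasing ``on the $\bar b$ branch \dots\ I increment $t$'' suggests the latter). Under that latter reading a pure-bit tail has constant $\beta$, so $\prod_k(1-\beta_k)$ decays geometrically, and acceptability alone cannot compensate (take $O(w)=1+1/|w|$); under the former reading the product of the $(1-\beta_k)$ is bounded below and the argument goes through. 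With the schedule fixed so that $\beta$ shrinks at every branch node, the proof is sound.
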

\begin{proof}
    Let $S$ be as given with $\delta$ being a witness to $S$ being almost constructible. Define a computable predictor functional that works as follows on prediction order $h$. By Observation \ref{oddsrange} we can assume $h$ is strictly increasing. The following recursive algorithm initiated with $w=\lambda$ and $L=0$ will give the values of a superpredictor $\pi$.
    
\begin{algorithm}[H]
\caption{Predictor}
\begin{algorithmic}[1]
\STATE Input $w, L$:
\STATE Compute a $p\in (0,1)$ such that $L+\log\frac{1}{p}<h(|w|+1)$ 
\STATE Compute $n\in \mathbb{N}$ such that $L+\log\frac{1}{1-p}< h(|w|+n)$
\STATE Compute $\delta(w,n)=(b,x)$
\STATE Set $\pi(w,b)=p$ and $\pi(w,1-b)=1-p$ where $1-b$ is the opposite bit of $b$
\STATE Set $\pi(w(x[0\dots i-1]), x[i]))=1$ for all $0< i< n$
\STATE Recursively start Predictor on inputs ($wb, L+\log\frac{1}{p}$) ($wx, L+\log\frac{1}{1-p}$)
\end{algorithmic}
\end{algorithm}
To compute the value $\pi(x,a)$ the above algorithm can be run and eventually the value is determined or all the active $w$'s will be longer than $x$ in which case 0 can be output making $\pi$ is a superpredictor.
Steps 2 and 3 ensure that $\pi$ $h$-succeeds on $S$.
\end{proof}

Since the singleton set $\{S\}$ is not computably countable for every undecidable $S$, we have the following.
\begin{corollary}
    There exists an $X\subseteq \mathbf{C}$ with computable strong measure zero that is not computably countable.
\end{corollary}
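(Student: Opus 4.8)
The plan is to combine the two results immediately preceding the statement: the Lemma showing that every almost constructible sequence yields a singleton of computable strong measure zero, and the Example exhibiting an almost constructible sequence that is not decidable. Concretely, I would take $S=\chi_A$, where $A=\mathrm{Graph}(f)$ is the busy-beaver-style language from the Example, and set $X=\{S\}$. By the Lemma, $\{S\}$ has computable strong measure zero, so the only thing left to verify is that $\{S\}$ is not computably countable.

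For that second half I would unwind the definition of $\mathrm{\Delta}$-countability at the level $\mathrm{\Delta}=\mathrm{comp}$. If $\{S\}$ were computably countable, there would be a computable $\delta:\mathbb{N}\times\{0,1\}^*\to\{0,1\}^*$ with each $\delta_k$ a constructor and $S\in\{R(\delta_k)\mid k\in\mathbb{N}\}$; hence $S=R(\delta_k)$ for some fixed $k$. Since $\delta$ is computable, the single constructor $\delta_k(w)=\delta(k,w)$ is computable, so $S=R(\delta_k)\in R(\mathrm{comp})=\mathrm{DEC}$ by the identity recorded in Section~2. This contradicts the undecidability of $S$ coming from the Example. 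Thus no such $\delta$ exists, and $\{S\}$ is not computably countable, which completes the argument with $X=\{S\}$.

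The substantive work has already been done in the Lemma, so the only points I expect to spend any care on are confirming that the Example's sequence is genuinely undecidable and that computable countability of a \emph{singleton} collapses to membership in $R(\mathrm{comp})$. The first follows because $f$ dominates the halting times of all $M_k(k)$, so deciding $\mathrm{Graph}(f)$ would compute $f$ and hence decide the halting problem on inputs $k$; the second is immediate from $R(\mathrm{comp})=\mathrm{DEC}$. I therefore anticipate no real obstacle: the corollary is essentially a direct consequence of the preceding Lemma and Example, and the proof is just the observation that an undecidable $S$ cannot be the result of any computable constructor.
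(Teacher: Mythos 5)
Your proposal is correct and follows exactly the route the paper takes: combine the preceding Lemma and Example, and observe that a singleton $\{S\}$ is computably countable only if $S=R(\delta_k)$ for some computable constructor, i.e.\ only if $S\in R(\mathrm{comp})=\mathrm{DEC}$. The paper states this last step without elaboration; your unwinding of it is the intended argument.
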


We will come back later in section 6 to investigate the complexity of sequences $S$ where $\{S\}$ has computable strong measure zero. For such $S$ we say $S$ has computable strong measure zero.

\section{Algorithmic Strong Measure Zero}
We now investigate strong measure zero for functionals that are lower semicomputable. We first show that there is an optimal algorithmic dimension functional.

\begin{definition}
    A \emph{continuous semimeasure} on $\mathbf{C}$ is a function $\mu:C\rightarrow \mathbb{R}^+$ such that $\mu(\lambda)\leq 1$ and $\mu(w)\geq \mu(w1)+\mu(w0)$ for all $w\in\{0,1\}^*$. 
\end{definition}
\begin{theorem}[Levin \cite{zvonkin1970complexity}]
    There is a universal lower semicomputable continuous semimeasure $\mathbf{M}$. That is, for every lower semicomputable continuous semimeasure $\mu$ there is a constant $c>0$ such that 
    \[\mathbf{M}(w)\geq c\mu(w)\]
    for all $w\in\{0,1\}^*.$
\end{theorem}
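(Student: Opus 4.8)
The plan is to realize $\mathbf{M}$ as a weighted sum of ``patched'' versions of every lower semicomputable function, where the patching forces the defining inequalities of a continuous semimeasure while leaving functions that are already semimeasures untouched. First I would fix an effective enumeration $g_0, g_1, g_2, \dots$ of all computable functions $g_e : \mathbb{N}\times\{0,1\}^* \to \mathbb{Q}\cap[0,\infty)$ that are nondecreasing in their first argument (any partial machine can be normalized to this form). By the definition of lower semicomputability, every lower semicomputable $\mu : \{0,1\}^*\to[0,\infty)$ is a pointwise limit $\mu(w) = \lim_{r\to\infty} g_e(r,w)$ for some $e$. These limits need not satisfy $\mu(\lambda)\le 1$ or $\mu(w)\ge\mu(w0)+\mu(w1)$, so the crux is a uniform transformation sending each $g_e$ to a computation $\tilde g_e$ of a genuine continuous semimeasure $\tilde\mu_e$.

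Next I would describe the patching. I process the approximations in stages, maintaining a finite rational assignment $\mu_s$ that is always a continuous semimeasure, i.e. $\mu_s(\lambda)\le 1$ and $\mu_s(w)\ge\mu_s(w0)+\mu_s(w1)$ at every node. When the enumeration requests raising the value at a node $wa$, I grant the increase only if the parent $w$ has enough slack, the slack at $w$ being $\mu_s(w)-\mu_s(w0)-\mu_s(w1)$; if it does not, I first recursively attempt to raise $\mu_s(w)$, which demands slack at the parent of $w$, cascading toward the root, whose ceiling is $1$. If room can be reserved all the way up, the whole chain of increases is committed; otherwise the value is left capped. The cascade is a finite, effectively searchable process at each stage, so $\tilde\mu_e=\lim_r \tilde g_e(r,\cdot)$ remains lower semicomputable, and the maintained invariant guarantees it is a continuous semimeasure.

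The main obstacle is the following key lemma: if $\mu_e := \lim_r g_e(r,\cdot)$ is already a continuous semimeasure, then $\tilde\mu_e = \mu_e$. The point is that the greedy cascade must never permanently block an increase that a true semimeasure would support. I would argue this by using the limiting inequalities essentially: since $\mu_e(w)\ge\mu_e(w0)+\mu_e(w1)$ holds in the limit and $\mu_e(\lambda)\le 1$, every ancestor of a node possesses, in the limit, at least as much mass as the total demanded by any finite collection of targets strictly below its respective children, so the room needed to grant any requested increase toward $\mu_e(wa)$ is eventually available up the entire chain. Hence every such request is granted in the limit and $\tilde\mu_e=\mu_e$; this reconstruction step is where the semimeasure inequality is used in full.

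Finally I would set $\mathbf{M} = \sum_{e\in\mathbb{N}} 2^{-e-1}\,\tilde\mu_e$. The partial sums $\sum_{e\le s}\tilde\mu_e$ yield a computation witnessing lower semicomputability, by the same summation device used to prove Lemma \ref{ctblsmz}. Each $\tilde\mu_e$ is a continuous semimeasure, so $\mathbf{M}(w)\ge \mathbf{M}(w0)+\mathbf{M}(w1)$ holds term by term and $\mathbf{M}(\lambda)\le\sum_{e} 2^{-e-1}\le 1$, making $\mathbf{M}$ a lower semicomputable continuous semimeasure. For universality, given any lower semicomputable continuous semimeasure $\mu$, I pick $e$ with $\mu = \lim_r g_e(r,\cdot)$; by the key lemma $\tilde\mu_e=\mu$, whence $\mathbf{M}(w)\ge 2^{-e-1}\tilde\mu_e(w)=2^{-e-1}\mu(w)$ for all $w$, so $c=2^{-e-1}>0$ witnesses the claim.
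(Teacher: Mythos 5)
The paper cites this theorem to Levin without reproducing a proof, so there is no internal argument to compare against; your proof is the standard one (enumerate and normalize all partial computable monotone approximations, trim each into a continuous semimeasure in a way that leaves genuine semimeasures unchanged in the limit, and take a weighted sum), and it is essentially correct, including the key lemma that the demand-driven cascade never permanently blocks an increase supported by a true semimeasure. The only step worth making fully explicit is the enumeration itself: one must enumerate \emph{partial} computable functions and normalize each to a total approximation nondecreasing in $r$ (since the total ones cannot be effectively enumerated), which you acknowledge only parenthetically.
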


\begin{theorem}[Schnorr \cite{DBLP:journals/mst/Schnorr71}]
    The function $\mathbf{d}:\{0,1\}^*\rightarrow \mathbb{R}$ defined by 
    \[\mathbf{d}(x)=2^{|x|}\mathbf{M}(x)\] 
    is a universal lower semicomputable supermartingale. That is, for every lower semicomputable supermartingale f there is a constant $c>0$ with 
    \[\mathbf{d}(w)\geq cf(w)\]
    for all $w\in\{0,1\}^*.$
\end{theorem}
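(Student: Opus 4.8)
The plan is to exploit the standard correspondence between supermartingales and continuous semimeasures, which is implemented precisely by the factor $2^{|x|}$, and then to pull back Levin's universality of $\mathbf{M}$ through this correspondence. First I would dispatch the two structural claims about $\mathbf{d}$. Lower semicomputability is immediate: $\mathbf{M}$ is lower semicomputable and $2^{|x|}$ is a computable positive rational, so multiplying a computation of $\mathbf{M}$ by $2^{|x|}$ gives a computation of $\mathbf{d}$. For the supermartingale property, recalling that a supermartingale is a $1$-supergale, hence satisfies $d(w) \geq \frac{d(w0)+d(w1)}{2}$, I would compute
\[
\frac{\mathbf{d}(w0)+\mathbf{d}(w1)}{2} = \frac{2^{|w|+1}\bigl(\mathbf{M}(w0)+\mathbf{M}(w1)\bigr)}{2} = 2^{|w|}\bigl(\mathbf{M}(w0)+\mathbf{M}(w1)\bigr) \leq 2^{|w|}\mathbf{M}(w) = \mathbf{d}(w),
\]
where the inequality is exactly the semimeasure condition $\mathbf{M}(w) \geq \mathbf{M}(w0)+\mathbf{M}(w1)$.

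For universality, let $f$ be any lower semicomputable supermartingale. If $f(\lambda)=0$, then the supermartingale inequality propagates downward to force $f\equiv 0$, and the domination $\mathbf{d}\geq c f$ holds trivially, so I may assume $f(\lambda)>0$. The natural candidate is $\mu_f(w)=2^{-|w|}f(w)$: the supermartingale inequality for $f$ translates verbatim into $\mu_f(w)\geq \mu_f(w0)+\mu_f(w1)$, and $\mu_f$ is lower semicomputable. The only condition that can fail is the normalization $\mu_f(\lambda)=f(\lambda)\leq 1$ required for $\mu_f$ to be a continuous semimeasure in the sense of the definition preceding Levin's theorem.

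The main, and essentially only, obstacle is this normalization at the root, and the key point is that it need not be resolved effectively. Since $f(\lambda)$ is a finite real, there simply \emph{exists} a $k\in\mathbb{N}$ with $2^{-k}f(\lambda)\leq 1$; I do not need to compute such a $k$, only to know one exists. Fixing such a $k$, the function $\nu(w)=2^{-k}\mu_f(w)=2^{-k-|w|}f(w)$ is a bona fide lower semicomputable continuous semimeasure, since scaling by the computable constant $2^{-k}$ preserves lower semicomputability and the recursion inequality while now giving $\nu(\lambda)\leq 1$.

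Finally I would apply Levin's theorem to $\nu$, obtaining a constant $c'>0$ with $\mathbf{M}(w)\geq c'\nu(w)$ for all $w\in\{0,1\}^*$. Multiplying through by $2^{|w|}$ yields $\mathbf{d}(w)=2^{|w|}\mathbf{M}(w)\geq c'2^{-k}f(w)$, so setting $c=c'2^{-k}>0$ gives $\mathbf{d}(w)\geq c f(w)$ for all $w$, which is the desired universality.
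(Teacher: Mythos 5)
Your proof is correct, and it is the standard argument for this result: the paper itself gives no proof, citing Schnorr, and the canonical derivation is exactly your reduction to Levin's universality of $\mathbf{M}$ via the correspondence $f \mapsto 2^{-|w|}f(w)$, with the non-effective choice of a scaling constant $2^{-k}$ to handle the normalization $\nu(\lambda)\leq 1$. The verification of the supermartingale inequality and the handling of the degenerate case $f(\lambda)=0$ are both sound.
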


Both of these theorems relativize to any oracle giving us the following. 

\begin{corollary}
    There is an universal algorithmic functional $\mathbf{F}$ defined by ${\mathbf{F}(g)=\mathbf{d}^g}$ for all gauge functions $g:\mathbb{N}\to \mathbb{Q}^+$. Specifically, a set $X\subseteq\mathbf{C}$ has algorithmic strong measure zero if and only if for every gauge function $g$ and every $S\in X$,
    \[\limsup_{n\to \infty} \frac{\mathbf{d}^g(S[0\dots n])}{2^{n}g(n)}=\infty.\]
\end{corollary}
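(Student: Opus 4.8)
The plan is to obtain this as a direct consequence of the relativized universality of $\mathbf{d}$ combined with the equivalence of algorithmic strong measure zero and algorithmic strong dimension zero furnished by Theorem \ref{equiv} and Remark \ref{robust}. First I would check that $\mathbf{F}$ defined by $\mathbf{F}(h)=\mathbf{d}^h$ is a legitimate algorithmic dimension functional. It is lower semicomputable because $\mathbf{d}$ is lower semicomputable and, by hypothesis, relativizes to any oracle; the oracle here is simply the gauge function $h$ fed to the functional. For each fixed $h$ the function $\mathbf{d}^h$ still satisfies $\mathbf{d}^h(w)\ge\tfrac{1}{2}\bigl(\mathbf{d}^h(w0)+\mathbf{d}^h(w1)\bigr)$, since that inequality is independent of the oracle, so $\mathbf{d}^h$ is a genuine supermartingale. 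Thus $\mathbf{F}$ has the correct type to witness $\Delta$-strong dimension zero at the algorithmic level.

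For the backward implication I would argue that if the displayed $\limsup$ condition holds for every gauge $h$ and every $S\in X$, then $\mathbf{d}^h$ $h$-succeeds on $X$ for every $h$, so $\mathbf{F}$ is a dimension functional witnessing that $X$ has algorithmic strong dimension zero; Theorem \ref{equiv} together with Remark \ref{robust} then yields that $X$ has algorithmic strong measure zero.

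For the forward implication, suppose $X$ has algorithmic strong measure zero. By Theorem \ref{equiv} and Remark \ref{robust} it has algorithmic strong dimension zero, so there is a lower semicomputable dimension functional $G$ such that for every gauge $g$ the supermartingale $G^g$ $g$-succeeds on $X$. Fixing a gauge $h$, the relativized universality of $\mathbf{d}$ (Schnorr's theorem) supplies a constant $c>0$, depending on $G$ and $h$, with $\mathbf{d}^h(w)\ge c\,G^h(w)$ for all $w$. Since multiplying by a positive constant does not affect whether a $\limsup$ is infinite,
\[
\limsup_{n\to\infty}\frac{\mathbf{d}^h(S[0\dots n])}{2^n h(n)}\ \ge\ c\,\limsup_{n\to\infty}\frac{G^h(S[0\dots n])}{2^n h(n)}\ =\ \infty
\]
for every $S\in X$, because $G^h$ $h$-succeeds on $X$. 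As $h$ was arbitrary, the displayed condition holds. The same domination $\mathbf{d}^h\ge c\,G^h$ simultaneously establishes the universality clause: every algorithmic dimension functional $G$ is dominated by $\mathbf{F}$ up to a multiplicative constant at each gauge.

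The step I expect to be the main obstacle is justifying cleanly that for a fixed gauge $h$ the section $G^h$ of an algorithmic dimension functional $G$ is lower semicomputable \emph{with oracle} $h$, so that the relativized form of Schnorr's universality applies with $h$ as the oracle. This is precisely where the definition of functional computability (oracle access to the input function) must be matched to the relativization hypothesis on $\mathbf{d}$ and $\mathbf{M}$. Everything else—that $\mathbf{d}^h$ remains a supermartingale, that positive constants are harmless inside the $\limsup$, and the passage through the strong-dimension-zero equivalence—is routine.
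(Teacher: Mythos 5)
Your proposal is correct and follows essentially the same route the paper intends: relativize Levin/Schnorr universality to the gauge $h$ as oracle so that $\mathbf{d}^h$ dominates the $h$-section of any algorithmic dimension functional up to a multiplicative constant, then pass through the equivalence of algorithmic strong measure zero and strong dimension zero (Theorem \ref{equiv} with Remark \ref{robust}). The point you flag as the potential obstacle—that $G^h$ is lower semicomputable relative to $h$—is immediate from the paper's definition of a functional being in the lower semicomputable class via oracle machines, which is exactly why the paper treats the corollary as a direct consequence of the relativized theorems.
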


Unlike the other levels of effectivity, a set $X$ has algorithmic strong measure zero if and only if $\{S\}$ has algorithmic strong measure zero for every $S\in X$. We will therefore focus on individual sequences and say that a sequence has algorithmic strong measure zero if $\{S\}$ has algorithmic strong measure zero.

We will make use of relativized \emph{a priori complexity} 
\[\mathrm{KM}^g(x)=\log\frac{1}{\mathbf{M}^g(x)}\]
in order to classify the sequences with algorithmic strong measure zero. 

\begin{lemma}\label{algsmz}
    A sequence $S$ has algorithmic strong measure zero if and only if for every gauge function $g$ there are infinitely many $n$ such that 
    \begin{equation}
        \mathrm{KM}^g(S[0\dots n])\leq \log\frac{1}{g(n)}.\label{lemmacondition}
    \end{equation}
\end{lemma}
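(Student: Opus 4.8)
The plan is to translate both directions into statements about the universal semimeasure $\mathbf{M}^h$, where the two conditions will turn out to differ only by an additive slack that the quantifier over \emph{all} gauges lets me absorb. First I would unwind algorithmic strong measure zero through the universal functional: by the preceding corollary, $S$ has algorithmic strong measure zero iff for every gauge $h$ one has $\limsup_n \mathbf{d}^h(S[0\dots n])/(2^n h(n)) = \infty$. Substituting Schnorr's identity $\mathbf{d}^h(x)=2^{|x|}\mathbf{M}^h(x)$ cancels the $2^n$ factors (up to an irrelevant factor of $2$), so this is equivalent to $\limsup_n \mathbf{M}^h(S[0\dots n])/h(n) = \infty$ for every gauge $h$. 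Writing $\mathbf{M}^h(x)=2^{-\mathrm{KM}^h(x)}$, this says: for every gauge $h$ and every constant $C$ there are infinitely many $n$ with $\mathrm{KM}^h(S[0\dots n]) \leq \log(1/h(n)) - C$. The lemma's condition \eqref{lemmacondition} is precisely the special case $C=0$.

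With this reformulation the forward direction is immediate: algorithmic strong measure zero supplies, for each gauge $h$, the instance $C=0$, which is exactly \eqref{lemmacondition}.

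The reverse direction is the crux and is where I expect the only genuine work. Here I must boost the slack-$0$ hypothesis, which I may invoke for \emph{every} gauge, into the slack-$C$ conclusion for a fixed gauge $h$ and arbitrary $C$. Given $h$ and $k\in\mathbb{N}$, I would apply the hypothesis not to $h$ but to the rescaled gauge $h_k(n)=2^k h(n)$, which is again a legitimate gauge (still rational-valued, non-increasing, and tending to $0$). This yields infinitely many $n$ with $\mathrm{KM}^{h_k}(S[0\dots n]) \leq \log(1/h_k(n)) = \log(1/h(n)) - k$. The remaining step is to pass from the oracle $h_k$ back to $h$: since $h_k$ is computable from $h$ uniformly in $k$ using only $O(\log k)$ bits to encode $k$, the universality of $\mathbf{M}$ gives $\mathbf{M}^h \geq c_k\,\mathbf{M}^{h_k}$ with $c_k = 2^{-O(\log k)}$, equivalently $\mathrm{KM}^h(x) \leq \mathrm{KM}^{h_k}(x) + O(\log k)$. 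Combining, infinitely many $n$ satisfy $\mathrm{KM}^h(S[0\dots n]) \leq \log(1/h(n)) - k + O(\log k)$; since $-k + O(\log k)\to-\infty$, taking $k$ large enough makes $-k+O(\log k)\leq -C$, which is the slack-$C$ conclusion and hence gives $\limsup_n \mathbf{M}^h(S[0\dots n])/h(n) = \infty$.

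The one point demanding care, and the expected main obstacle, is the uniformity behind $c_k$: I must verify that $(\mathbf{M}^{h_k})_{k}$ is a \emph{uniformly} $h$-lower-semicomputable family, so that its domination constant grows only with the descriptive complexity of the index $k$, giving $c_k = 2^{-O(\log k)}$ rather than something uncontrolled. This is exactly what forces $-k + O(\log k)\to -\infty$ and makes the boosting go through; any bound on $c_k$ that is subexponential in $2^k$ would suffice, so once the uniformity is established the argument is robust.
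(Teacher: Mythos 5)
Your proof is correct and follows essentially the same route as the paper: both directions reduce via Schnorr's identity to the statement $\limsup_n \mathbf{M}^h(S[0\dots n])/h(n)=\infty$, and the reverse direction is handled by applying the hypothesis to a gauge that is Turing-equivalent to $h$ but larger, then transferring back through the universality of $\mathbf{M}^h$. The only difference is cosmetic: where you use the family $h_k=2^kh$ together with a (correctly identified, and true) uniformity bound $\mathrm{KM}^h\leq\mathrm{KM}^{h_k}+O(\log k)$, the paper achieves the same boost in one shot by rescaling to $h^{1/2}$, whose ratio to $h$ already tends to infinity.
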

\begin{proof}
    Let $S$ be any sequence and $g$ be a gauge function. We have that $S$ has algorithmic strong measure zero if and only if
      \begin{align}
&\limsup_{n \to \infty}\frac{\mathbf{d}^g(S[0\dots n])}{2^{n}g(n)}=\infty\nonumber \\
\iff  &\limsup_{n \to \infty}\frac{\mathbf{M}^g(S[0\dots n])}{g(n)}=\infty\nonumber\\
\iff  &\limsup_{n \to \infty}\frac{2^{-\mathrm{KM}^g(S[0\dots n])}}{g(n)}=\infty. \label{toprove}
\end{align}
Note that ${\mathrm{KM}^g(x)=\mathrm{KM}^{g(n)^2}(x)+O(1)}$. Hence, if we assume that $\eqref{lemmacondition}$ is true then there is a $c\in \mathbb{R}$ and infinitely many $n$ with 
\[\mathrm{KM}^g(S[0\dots n])\leq \log\frac{1}{g(n)^2}+c.\]
For each such $n$ we have 
\[2^{-\mathrm{KM}^g(S[0\dots n])}\geq 2^{-c}g(n)^2\]
and therefore
\[\frac{2^{-\mathrm{KM}^g(S[0\dots n])}}{g(n)^2}\geq 2^{-c}.\]
Since $\lim_{n\to \infty}\frac{g(n)}{g(n)^2}=\infty$, $\eqref{toprove}$ is true.

For the other direction we will prove the contrapositive. Suppose there is gauge $g$ such that $\mathrm{KM}^g(S[0\dots n])> \log\frac{1}{g(n)}$ for all sufficiently large $n$. Then we have 
\[\frac{2^{-\mathrm{KM}^g(S[0\dots n])}}{g(n)}\leq 1\]
for all sufficiently large $n$ so $\eqref{toprove}$ does not hold.
\end{proof}

Reimann and Slaman have defined \cite{reimann2015measures} a class of sequences called \emph{never continuously random}, denoted NCR, consisting of all sequences that are never effectively random with respect to any continuous probability measure. Here continuous means the same as atomless, but the outer measures for strong measure zero in $\eqref{premeasure}$ are more general than probability measures. The following is an overview of the definition of NCR, but see the paper for the details on their representations of measures.

\begin{definition}[Reimann and Slaman \cite{reimann2015measures}]
    Let $\mu$ be a probability measure and $r_\mu$ be a representation of $\mu$. Then
    \begin{itemize}
        \item A Martin-Löf-$\mu$ test relative to $r_\mu$ is a sequence of uniformly $\mathrm{\Sigma}_1^0$ sets $(W_n)_{n\in \mathbb{N}}$ relative to $r_\mu$ with $\mu(W_n)\leq 2^{-n}$ for all n.
        \item $X\in \mathbf{C}$ passes a Martin-Löf-$\mu$ test relative to $r_\mu$ if $X\not \in \bigcap_{n\in \mathbb{N}} W_n$.
        \item $X \in \mathbf{C}$ is $\mu$-random if it passes every Martin-Löf-$\mu$ test relative to $r_\mu$ for some representation $r_\mu$ of $\mu$.
        \item $X\in \mathbf{C}$ is in NCR if it its not $\mu$-random for every continuous probability measure $\mu$.
    \end{itemize}
\end{definition}

In a related paper Reimann defined the following notion of complexity extending a definition of complex sequences by Kjos-Hanssen, Merkle, and Stephan \cite{kjos2006kolmogorov}.

\begin{definition}[Reimann \cite{reimann2008effectively}]
    For a unbounded increasing function $h:\mathbb{N}\rightarrow\mathbb{N}$, a sequence $S$ is \emph{strongly $h$-complex relative to $Z$} if $h$ is computable from $Z$ and 
    \[\mathrm{KM}^Z(S[0\dots n])>h(n)\]
    for all $n$.
\end{definition}
Based off the work of Reimann \cite{reimann2008effectively}, Li showed the following.
\begin{lemma}[\cite{li2020algorithmic,reimann2008effectively}]
    A sequence $S$ is in NCR if and only if for every $Z\in\mathbf{C}$ and unbounded increasing function $h:\mathbb{N}\rightarrow\mathbb{N}$ computable from $Z$ with $h(n+1)\leq h(n)+1$ for all $n$, $S$ is not strongly $h$-complex relative to $Z$.
\end{lemma}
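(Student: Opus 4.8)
The plan is to prove the contrapositive equivalence: $S\notin\mathrm{NCR}$ if and only if there exist an oracle $Z$ and an unbounded nondecreasing $h:\mathbb{N}\to\mathbb{N}$ with $h(n+1)\leq h(n)+1$, computable from $Z$, such that $\mathrm{KM}^Z(S[0\dots n])>h(n)$ for all $n$. The single tool that drives both directions is the relativized Levin--Schnorr theorem for a priori complexity: a sequence $S$ is $\mu$-random with respect to a representation $r_\mu$ exactly when $\mathrm{KM}^{r_\mu}(S[0\dots n])\geq -\log\mu(S[0\dots n])-O(1)$ for all $n$. I would also use that $\mathbf{M}$ relativizes monotonically in the oracle, so that $r_\mu\leq_T Z$ gives $\mathrm{KM}^Z(x)\leq\mathrm{KM}^{r_\mu}(x)+O(1)$; this lets me move freely between the oracle $Z$ appearing in the complexity condition and the representation $r_\mu$ appearing in the randomness notion.

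For the direction from complexity to non-membership in $\mathrm{NCR}$, suppose $\mathrm{KM}^Z(S[0\dots n])>h(n)$ for all $n$ with $h$ as above. I would build a continuous probability measure $\mu$, computable from $Z$, by distributing mass along the path $S$: set $\mu(\lambda)=1$ and, reading the increments of $h$, either keep all current mass on the branch of $S$ when $h(n+1)=h(n)$ or split it evenly between the two children when $h(n+1)=h(n)+1$, spreading the mass that leaves $S$ in a uniform (dyadic) fashion inside the sibling subtree. The Lipschitz condition $h(n+1)\leq h(n)+1$ is exactly what makes this legal, since a measure may lose at most a factor of two of its mass per bit; one then checks $\mu(S[0\dots n])=2^{-h(n)}$, that $\mu$ is atomless because $h\to\infty$ along $S$ while the off-path subtrees carry a uniform measure, and that $\mu$ is $Z$-computable since $h$ is. Taking $r_\mu\leq_T Z$ and applying Levin--Schnorr gives $\mathrm{KM}^{r_\mu}(S[0\dots n])\geq\mathrm{KM}^Z(S[0\dots n])-O(1)>h(n)-O(1)=-\log\mu(S[0\dots n])-O(1)$, so $S$ is $\mu$-random and hence $S\notin\mathrm{NCR}$.

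For the converse, suppose $S$ is $\mu$-random for a continuous probability measure $\mu$ with representation $r_\mu$, and set $Z=r_\mu$. Levin--Schnorr gives $\mathrm{KM}^Z(S[0\dots n])\geq -\log\mu(S[0\dots n])-O(1)$, and atomlessness forces $-\log\mu(S[0\dots n])\to\infty$, so the right-hand side is unbounded. The difficulty is that $n\mapsto-\log\mu(S[0\dots n])$ is not computable from $Z$ alone (it depends on $S$, which I must not place in the oracle) and may grow faster than one per bit, so it cannot serve directly as $h$. I would resolve this with the uniform modulus of continuity of $\mu$: since $\mu$ is atomless, compactness yields a function $g_\mu$, strictly increasing and computable from $r_\mu$, with $\mu(w)\leq 2^{-m}$ for every string $w$ of length $g_\mu(m)$, and crucially $g_\mu$ does not mention $S$. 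Defining $h(n)=\#\{m\geq 1:g_\mu(m)\leq n\}$ gives an unbounded nondecreasing $Z$-computable function with unit increments (strict monotonicity of $g_\mu$ giving at most one threshold per length), and for any $n$ with $g_\mu(h(n))\leq n$ we get $\mu(S[0\dots n])\leq 2^{-h(n)}$, hence $\mathrm{KM}^Z(S[0\dots n])\geq h(n)-O(1)$. Absorbing the additive constant by replacing $h$ with $\max(0,h-c)$ (still unbounded, nondecreasing, unit-increment) yields $\mathrm{KM}^Z(S[0\dots n])>h(n)$ for all $n$, so $S$ is strongly $h$-complex relative to $Z$.

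The main obstacle is precisely this last point in the converse: manufacturing a single $Z$-computable order $h$ that certifies the complexity of $S$ without being allowed to consult $S$. Everything else is bookkeeping with additive constants and the routine verification that the measure built in the first direction is a continuous, $Z$-computable probability measure; the conceptual weight sits in recognizing that the uniform granularity $g_\mu$ of a continuous measure is the right $S$-independent object, and that the unit-step condition is the combinatorial shadow of a measure's per-bit mass constraint.
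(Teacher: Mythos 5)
First, a remark on scope: the paper does not prove this lemma at all -- it is imported verbatim from Reimann and Li -- so there is no in-paper argument to compare against, and your proposal has to stand on its own. Your converse direction (from $\mu$-randomness for a continuous measure to strong $h$-complexity) is essentially the standard argument and is sound: Levin--Schnorr for a priori complexity gives $\mathrm{KM}^{r_\mu}(S[0\dots n])\geq -\log\mu(S[0\dots n])-O(1)$, and the granularity function $g_\mu$ of an atomless measure is exactly the $S$-independent, $r_\mu$-computable object needed to convert this into a unit-increment order $h$; the remaining steps are the bookkeeping you describe.

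The forward direction, however, has a fatal gap. The measure you build ``along the path $S$'' is not computable from $Z$ -- deciding which child keeps the mass at step $n$ requires the bit $S(n)$ -- so the claim that $\mu$ is $Z$-computable is false, and any representation $r_\mu$ of this measure necessarily encodes $S$: since $\mu(S[0\dots n])=2^{-h(n)}$ while off-path mass decays uniformly, $S$ is recoverable from the values of $\mu$, hence from every name of $\mu$, as the heavy path. But then $\{S\}$ is a $\mu$-null set coverable by an $r_\mu$-computable Martin-Löf $\mu$-test, so $S$ is not $\mu$-random relative to any representation; the measure defeats its own purpose. Concretely, your chain breaks at ``$\mathrm{KM}^{r_\mu}(S[0\dots n])\geq\mathrm{KM}^Z(S[0\dots n])-O(1)$'': this needs $r_\mu\leq_T Z$, which fails, and indeed $\mathrm{KM}^{r_\mu}(S[0\dots n])=O(1)$ once $r_\mu$ computes $S$. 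This is the genuinely hard half of the lemma: one must produce a continuous measure assigning $S$ small mass whose representation does \emph{not} compute $S$, which in the cited work is achieved by a compactness/basis argument on the $\Pi^0_1(Z\oplus S)$ class of measures $\{\mu:\forall n\ \mu(S[0\dots n])\leq 2^{-h(n)}\}$ together with randomness conservation, not by writing the measure down along $S$. The $S$-independence obstacle you correctly flag in the converse is present, and unresolved, in your forward direction as well.
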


\begin{theorem}\label{NCR}
    A sequence $S$ has algorithmic strong measure zero if and only if it is in NCR.
\end{theorem}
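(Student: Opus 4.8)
The plan is to connect the $\mathrm{KM}^h$-based characterization of algorithmic strong measure zero from Lemma~\ref{algsmz} with the characterization of NCR via strong $h$-complexity given in the lemma of Li and Reimann. Both conditions are statements about the behavior of relativized a priori complexity $\mathrm{KM}^h(S[0\dots n])$ against a threshold expressed in terms of $h$, so the proof should amount to translating between the two threshold functions and reconciling the quantifier structures. The key bridge is the observation that the gauge functions $h:\mathbb{N}\to\mathbb{Q}^+$ with $h(n)\to 0$ appearing in Lemma~\ref{algsmz} correspond, via $g(n)=\log\frac{1}{h(n)}$, exactly to the unbounded increasing complexity thresholds $g$ appearing in the definition of strong $h$-complexity; the growth-rate side condition $h(n+1)\le h(n)+1$ in Li's lemma translates to a Lipschitz-type condition on the gauge that, as in Lemma~\ref{SMZ-SP} and Observation~\ref{oddsrange}, can be assumed without loss of generality.

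First I would set $g(n)=\log\frac{1}{h(n)}$ and note this is a bijection (up to the stated normalizations) between gauge functions and unbounded nondecreasing integer-valued thresholds, with $h$ computable from $Z$ iff $g$ is. Under this substitution, condition~\eqref{lemmacondition} of Lemma~\ref{algsmz}, namely that there are infinitely many $n$ with $\mathrm{KM}^h(S[0\dots n])\le \log\frac{1}{h(n)} = g(n)$, is precisely the negation of the statement ``$\mathrm{KM}^g(S[0\dots n])>g(n)$ for all $n$,'' i.e. the negation of $S$ being strongly $g$-complex relative to the oracle encoding $h$. So $S$ has algorithmic strong measure zero iff for every gauge $h$ there are infinitely many $n$ failing the strong-complexity inequality, which is exactly the condition that $S$ is not strongly $g$-complex for any such $g$. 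The Li--Reimann lemma identifies this latter condition with membership in NCR.

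The main obstacle I expect is the mismatch between ``infinitely many $n$ with $\le$'' (Lemma~\ref{algsmz}) and ``for all $n$, $>$'' (strong complexity). These are not literal negations of each other across a single fixed $h$, because the negation of ``$>$ for all $n$'' is merely ``$\le$ for some $n$,'' whereas algorithmic strong measure zero demands infinitely many such $n$. I would resolve this by exploiting the quantification over \emph{all} gauges: given any gauge $h$, one can construct from it a family of shifted or slowed-down gauges so that a single witness $n$ for one gauge yields the infinitely-many-$n$ condition across the family, and conversely. Concretely, if $S$ is not strongly $h$-complex then there is at least one $n$ with $\mathrm{KM}^h(S[0\dots n])\le g(n)$; applying this to the gauges $h_k(n)=h(\max(n-k,0))$ (or a similar time-delayed reparametrization, each still computable from the same oracle) forces witnesses at arbitrarily large positions, delivering infinitely many $n$. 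This reparametrization trick, together with the robustness of the gauge class under the $h(n+1)\le h(n)+1$ normalization established earlier, is what makes the two quantifier patterns genuinely equivalent.

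Finally I would verify that the oracle relativization matches on both sides: the relativized complexity $\mathrm{KM}^h$ used in Lemma~\ref{algsmz} takes the gauge $h$ itself as oracle, while the Li--Reimann formulation relativizes to an arbitrary $Z$ from which $h$ is computable. Since $h$ is trivially computable from itself, and since any $Z$ computing $h$ can only decrease complexity by at most an additive constant that is absorbed by the $\mathrm{KM}^h(x)=\mathrm{KM}^{h(n)^2}(x)+O(1)$ style invariance already used in the proof of Lemma~\ref{algsmz}, the two relativizations yield the same class. Assembling these observations gives both implications and completes the equivalence.
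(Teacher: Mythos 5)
Your overall architecture is the same as the paper's: both directions run through Lemma~\ref{algsmz} and the Li--Reimann characterization of NCR via strong complexity, with the translation between gauges and thresholds via $g(n)=\log\frac{1}{h(n)}$, and you correctly isolate the one genuine obstacle, namely that failure of strong complexity supplies only \emph{one} witness $n$ with $\mathrm{KM}^h(S[0\dots n])\le g(n)$, whereas condition~\eqref{lemmacondition} needs infinitely many. The problem is that your proposed resolution of this obstacle does not work as stated. Since a gauge is non-increasing, the shifted threshold $g_k(n)=\log\frac{1}{h_k(n)}$ equals the \emph{constant} $\log\frac{1}{h(0)}$ for all $n\le k$. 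So if there is a single small $n_0$ with $\mathrm{KM}^h(S[0\dots n_0])\le\log\frac{1}{h(0)}$ --- which happens generically, e.g.\ for any gauge with $h(0)=2^{-100}$, since short prefixes have small a priori complexity --- then that same $n_0$ witnesses non-strong-complexity for \emph{every} $h_k$ simultaneously, and applying the NCR hypothesis to the whole shifted family never produces a witness beyond $n_0$. The claim that the reparametrization ``forces witnesses at arbitrarily large positions'' is therefore unjustified, and the step ``delivering infinitely many $n$'' fails.

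The paper closes this gap by a different perturbation: arguing by contradiction, if only finitely many $n$ satisfied the witness inequality, one could change the threshold on finitely many values (in effect, push it strictly \emph{below} the actual complexity values at those finitely many positions, which preserves unboundedness, monotonicity, the condition $h(n+1)\le h(n)+1$, and computability from the same oracle) to obtain an $h'$ with no witnesses at all; then $S$ would be strongly $h'$-complex relative to the gauge, contradicting $S\in\mathrm{NCR}$. Your idea can be repaired along these lines --- the perturbation must depress the threshold beneath the finitely many offending $\mathrm{KM}$ values rather than merely delay the gauge --- but as written this is the one step of your argument that fails. The rest (the oracle bookkeeping between $\mathrm{KM}^h$ and $\mathrm{KM}^Z$, absorbing additive constants via the slack between $h$ and $h^2$ already used in Lemma~\ref{algsmz}) matches what the paper does.
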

\begin{proof}
    First assume $S$ has algorithmic strong measure zero and consider any $Z$. Then for every unbounded increasing $h$ computable from $Z$ we have that ${g(n)=2^{-h(n)}}$ is a gauge also computable from $Z$. As $S$ has algorithmic strong measure zero, by Lemma \ref{algsmz} there is a constant $c$ and infinitely many n with
     \[\mathrm{KM}^Z(S[0\dots n])\leq \log\frac{1}{g(n)^{\frac{1}{2}}}
     +c<h(n).\]

     For the other direction let $S$ be in NCR. Then given a gauge function $g$ we can computably transform it into a gauge function $g'$ with $g'(n)\geq g(n)$ and such that $h(n)=log(\frac{1}{g'(n)})$ is an unbounded increasing function with $h(n+1)\leq h(n)+1$ for all $n$. Then $S$ is not $h$ complex relative to $g$ so there are infinitely many $n$ such that 
     \[\mathrm{KM}^g(S[0\dots n])\leq h(n).\]
     Note that if there were only finitely many $n$ then an $h'$ could be computed by changing finitely many values so that S is strongly $h'$-complex. Thus,
     there are infinitely many $n$ with 
    \[\mathrm{KM}^g(S[0\dots n])\leq \log\frac{1}{g'(n)}\leq \log\frac{1}{g(n)}.\]
    Hence, by Lemma \ref{algsmz} $S$ has algorithmic strong measure zero.
     
\end{proof}
    In the classical setting, the NCR class corresponds to sets that are measure zero with respect to all Borel atomless probability measures. These sets are referred to as universal measure zero which is a weaker property than strong measure zero. In fact, the existence of uncountable sets with universal measure zero is provable in ZFC \cite{miller1984special}. However, Theorem \ref{NCR} says that at the algorithmic level these two notions are equivalent. Reimann and Slaman also investigated $\mathrm{NCR}_n$ random sequences allowing more complex Martin-Löf tests. They were able to show each of these are countable, but in order to do so they needed an application of Borel determinacy and hence the existence of infinitely many iterates of the power set of the natural numbers \cite{reimann2022effective}.

\section{Effective coverings and a Correspondence Principle}
In this section we will look at how effective coverings can be used to characterize strong measure zero similar to Borel's original definition.

\begin{definition}
    Let  $\mathrm{\Delta}\in \mathrm{\Gamma}_{RCA}$, $A\subseteq \mathbb{N}$ and $X\subseteq \mathbf{C}$. A function ${f^A:\mathbb{N}\to \{0,1\}^*}$ is a \emph{$\mathrm{\Delta}$-$A$ covering of $X$} if $f\in \mathrm{\Delta}$ with oracle $A:\mathbb{N}\to \{0,1\}$ satisfies the following.
    \begin{enumerate}
        \item If $n\in A$ then $f^A(n)=w$ for some $w$ with $|w|= n$. 
        \item For every $S\in X$ there exits an $n\in A$ such that $f^A(n)\sqsubseteq S$.
    \end{enumerate}
    We refer to $\{f^A(n)\mid n\in A\}$ of such an $f$ as a \emph{$\mathrm{\Delta}$-$A$ cover} of $X$ 
    and say that $X$ is \emph{strongly $\mathrm{\Delta}$-coverable} if there is a functional $F\in \mathrm{\Delta}$ such that $F(A)$ is a $\mathrm{\Delta}$-$A$ covering of $X$ for all infinite $A\subseteq\mathbb{N}$.

    At the algorithmic level $\mathrm{\Delta}$, the above definitions hold for $f^A$ being a partial recursive function that can also be undefined in condition 1 above. Specifically, $f$ corresponds to a oracle Turing machine that outputs a string of length $n$ or does not halt on every $n\in A$.
\end{definition}

\begin{lemma}
    For $\mathrm{\Delta}\in\mathrm{\Gamma}$, a set $X$ has $\mathrm{\Delta}$-strong measure zero if and only if it is strongly $\mathrm{\Delta}$-coverable.
\end{lemma}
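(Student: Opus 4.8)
The plan is to route both implications through the Higuchi--Kihara premeasure. By Lemma~\ref{premeasure} and the correspondence noted immediately after it, an acceptable odds function $O$ admits an $O$-supermartingale succeeding on $X$ exactly when $\mu_O^*(X)=0$, i.e.\ when for every $\varepsilon>0$ there is a prefix set $B$ with $X\subseteq\bigcup_{w\in B}C_w$ and $\sum_{w\in B}\mu_O(w)<\varepsilon$ (see \eqref{outerpremeasure}). Thus a $\mathrm{\Delta}\dSMZ$ witness is, in effect, a uniform way of producing small-$\mu_O$-measure cylinder covers, while strong $\mathrm{\Delta}$-coverability asks for covers that use exactly one cylinder of each length in a prescribed infinite index set $A$. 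The classical content linking these is the elementary fact that, because the sequence $(\varepsilon_n)$ in \eqref{SMZ} is arbitrary, an SMZ cover already has one interval per index; the work is to make the translation uniform and to keep it inside $\mathrm{\Delta}$.

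For the direction from coverability to $\mathrm{\Delta}\dSMZ$, let $F$ be a covering functional and fix an acceptable odds $O$ with range in $[1,2]$. Since $O$ is acceptable, $h(n)=\max_{|w|=n}\mu_O(w)\to 0$ by compactness, as in the proof of Lemma~\ref{SMZ-SP}. For each $j$ choose an infinite set $A_j\subseteq\mathbb{N}$ of lengths along which $h$ is summably small, say $\sum_{n\in A_j}h(n)\le 2^{-j}$. Feeding $A_j$ to $F$ yields one cylinder per length in $A_j$, and these form a cover $P_j$ of $X$ with $\sum_{w\in P_j}\mu_O(w)\le\sum_{n\in A_j}h(n)\le 2^{-j}$, since each $f^{A_j}(n)$ has length $n$. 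The standard cover-to-martingale construction against $\mu_O$ turns $P_j$ into an $O$-supermartingale $d_j$ with $d_j(\lambda)\le 2^{-j}$ attaining value $1$ on every cylinder of $P_j$, hence on a prefix of each $S\in X$. Summing, $d=\sum_j d_j$ is an $O$-supermartingale by the observation on sums with summable initial capital, and since $S$ meets $P_j$ for every $j$ we obtain $\limsup_n d(S[0\dots n])=\infty$. The map $O\mapsto d$ is a $\mathrm{\Delta}$-odds functional, so $X\in\mathrm{\Delta}\dSMZ$.

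Conversely, let $F$ be an odds functional and fix an infinite $A=\{a_0<a_1<\cdots\}$. Using Observation~\ref{oddsrange} take the odds function $O_A$ with $O_A(w)=2$ when $|w|\in A$ and $O_A(w)=1$ otherwise; it is acceptable because $A$ is infinite, and $\mu_{O_A}(w)=2^{-|A\cap[1,|w|]|}$. Because $O_A\equiv 1$ off the lengths in $A$, any $O_A$-supermartingale is nonincreasing along a branch except at the lengths in $A$, so the minimal prefixes on which $F^{O_A}$ first reaches a level $c$ form a prefix set all of whose elements have lengths in $A$. Since the product $F^{O_A}(\cdot)\,\mu_{O_A}(\cdot)$ is superadditive under passing to children (the computation of Lemma~\ref{OddsSMtoSM}), summing over this antichain gives $\sum_w\mu_{O_A}(w)\le F^{O_A}(\lambda)/c$, which bounds the number of crossing cylinders at each scale. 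The thin cover $F(A)$ is then read off by assigning these first-crossing cylinders to distinct lengths of $A$ and truncating to the slot length; the resulting map is a $\mathrm{\Delta}$-$A$ covering of $X$.

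The main obstacle is this second direction, where one must deliver a cover using only a single cylinder per length: a single small-$\mu_O$-measure cover is in general far too fat for this, so the construction must exploit both the freedom to tailor $O_A$ to $A$ (forcing all growth of the supermartingale, and hence all first crossings, onto the lengths of $A$) and the infinitude of $A$, distributing the controlled number of crossing cylinders at each scale into the available slots. The secondary difficulty is keeping everything inside $\mathrm{\Delta}$: computing the level maxima $h(n)=\max_{|w|=n}\mu_O(w)$ in the first direction is an exponential search, harmless at the comp and algorithmic levels but to be avoided for a time- or space-bounded $\mathrm{\Delta}$, where one instead reads the sparse sets $A_j$ off the scale structure of $O$ rather than off its exact maxima, in the spirit of Remark~\ref{robust}.
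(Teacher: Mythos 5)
Your architecture matches the paper's in both directions (covers of small $\mu_O$-mass summed into a supermartingale one way; an odds function that doubles only at lengths drawn from $A$, with the cover read off the level sets of $F^{O_A}$, the other way), but each direction as written has a gap. The smaller one is in the cover-to-$\mathrm{\Delta}\dSMZ$ direction: your $d_j$ reaches value $1$ at a single prefix $w_j\sqsubseteq S$ and cannot hold that value on all extensions, since for odds in $[1,2]$ one has $\mu_O(v0)+\mu_O(v1)=\mu_O(v)\bigl(O(v0)^{-1}+O(v1)^{-1}\bigr)\geq\mu_O(v)$, so a supermartingale equal to $1$ at $v$ cannot stay $\geq 1$ on both children. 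Hence $d=\sum_j d_j$ only yields $\limsup_n d(S[0\dots n])\geq 1$, not $=\infty$. The fix is what the paper does: make $d_j$ attain height $2^j$ (choose $A_j$ with mass at most $2^{-2j}$ and rescale) so the heights are unbounded along $S$.

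The substantive gap is in the $\mathrm{\Delta}\dSMZ$-to-cover direction, precisely at the slot-assignment step you flag as the main obstacle. With your $O_A$ doubling at \emph{every} length of $A$, a string of length $a_k$ has $\mu_{O_A}$-value $2^{-(k+1)}$, so the antichain bound $\sum_w\mu_{O_A}(w)\leq F^{O_A}(\lambda)/c$ permits on the order of $2^{k}$ crossing cylinders at length $a_k$. Each of these must be truncated to and stored at a distinct slot $a_i$ with $a_i\leq a_k$ (the covering condition forces $|f^A(a_i)|=a_i$, and truncating to a length above $a_k$ would not yield a prefix of $S$), yet only $k+1$ such slots exist and they are shared with all earlier levels; for large $k$ no assignment is possible. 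The missing idea is to sparsify the doubling levels: the paper sets $f_A(n)=a_{2^{n+2}-3}$ and doubles the odds only at lengths in the range of $f_A$, so that at the $n$-th doubling length at most $2^{n+1}$ strings can have crossed while the $2^{n+1}$ fresh indices in the block $(2^{n+1}-3,\,2^{n+2}-3]$ supply exactly enough slots, all lying at or below $f_A(n)$. Without this quantitative matching of crossing counts to geometrically growing blocks of slots, the construction you describe cannot be completed.
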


\begin{proof}
    It is easy to see that this is true for $\mathrm{\Delta}\in \mathrm{\Gamma}_R$ as well as $\mathrm{\Delta}=\mathrm{all}$ where it corresponds to the normal classical definition. We will prove that it is true for $\mathrm{\Delta}$ equal to computable or lower semicomputable.
    First, suppose $X$ has algorithmic (computable) strong measure zero and let $F$ be a odds functional witnessing this. Without loss of generality suppose $F^O(\lambda)< 1$ for all odds functions $O$. We will define a Turing machine $M$ that works as follows on oracle ${A=\{a_0,a_1,a_2,\dots\}\subseteq \mathbb{N}}$ in standard order. Let $f_A:\mathbb{N}\rightarrow\mathbb{N}$ be the function 
    \[f_A(n)=a_{2^{n+2}-3}\]
    and consider the odds function
    \[
    O(w)= 
\begin{cases}
    2,& \text{if } |w|\in \mathrm{Range}(f_A)\\
    1,              & \text{otherwise}
\end{cases}
\]
Then let $M^A$ perform the following algorithm.

    \begin{algorithm}[H]
    \caption{}
    \begin{algorithmic}[1]
    \STATE On input $a_i\in A$:
    \STATE Compute the minimum $n\in \mathbb{N}$ such that $i\leq 2^{n+2}-3$.
    \STATE Let $c= i-2^{n+1}+3$.
    \STATE Run $F^O$ and for each $w$ with $|w|=f_A(n)$ where $F^O(w)\geq 1$ is found set $c=c-1$.
    \STATE Output $w[0\dots a_i]$ when $c$ gets set to zero in step 4. If $\mathrm{\Delta}$=comp then output $0^{a_i}$ if every $w$ with $|w|=f_A(n)$ has been checked and $c>0$.
    \end{algorithmic}
    \end{algorithm}
    
Note that by construction the maximum number of strings $w$ with ${|w|=f_A(n)}$ and $F^O(w)>=1$ is at most $2^n$ in order for the supermartingale condition $\eqref{smc}$ to be satisfied. By construction, a prefix of each of these $w$ is output for every $n$. Moreover, for every $S\in X$, $F^O(S[0...n])$ can only increase on the lengths in $\mathrm{Range}(f_A)$. Therefore $M^A$ will output a prefix of $S$ for every $S$ with $\limsup_{n\to \infty} F^O(S[0\dots n])>1$ and hence all $S\in X.$

For the other direction, let $M^A$ be an oracle Turing machine that witnesses $X$ being strongly $\mathrm{\Delta}$-coverable. We will define a odds functional that succeeds on $X$ given odds function $O$ as follows. Let 
\[f_O(n)=\min\{m\in \mathbb{N}\mid \prod_{i=0}^{m-1} O(w[0\dots i])\geq n \text{ for all } w\in \{0,1\}^m\}\]
be the minimum length $m$ so that the product of odds along every path of length $m$ is at least $n$, noting it is possible by compactness. We will create an infinite set $d_0,d_1,d_2,\dots$ of $O$-martingales. 

For $i\in\mathbb{N}$ let 
\[A_i=\{f_O(2^{i+n})\mid n\in \mathbb{N}\}.\] 
Then let $d_i$ be an $O$-martingale defined as follows.
\begin{itemize}
    \item Initially let $d_i(w)=0$ for all $w\in \{0,1\}^*$
    \item For each $m=f_O(2^{i+n})\in A_i$ for some $n\in \mathbb{N}$ such that $M^{A_i}(m)=w$:
    \begin{itemize}
    
        \item Increase $d_i(\lambda)$ by $2^{-(i+n)}$. 
        \item Increase $d_i(w[0\dots j])$ by $2^{-(i+n)}\prod_{k=0}^j O(w[0\dots k])$ for $j< |w|$.
        \item Increase $d_i(w0^m)$ by $2^{-(i+n)}\prod_{j=0}^{ |w|-1} O(w[0\dots j])\prod_{k=0}^mO(w0^k)$ for all $m\in \mathbb{N}$.
    \end{itemize}
\end{itemize}
 Then $d_i$ is a $\Delta$-computable $O$-martingale. For each $w$ in the range of $M^{A_i}$ we have
\[d_i(w)\geq2^{-(i+n)}2^{2i+n}=2^i\]
and hence $d_i$ shows that the 
\[\limsup_{n\to \infty}d^O(S[0\dots n])\geq 2^i\]
for all $S\in X$.
Now letting
\[F(O)=d=\sum_{i\in \mathbb{N}} d_i\]
we have that $F(O)$ succeeds on all $S\in X$. It is clear that $F\in \mathrm{\Delta}$ and we have $d(\lambda)<\infty$ since
\[d(\lambda)=\sum_{i\in \mathbb{N}}d_i(\lambda)\leq \sum_{i\in \mathbb{N}}(\sum_{n\in \mathbb{N}}2^{-(i+n)})=\sum_{i\in \mathbb{N}}2^{-i+1}=4.\]
\end{proof}

In the above proof we used a functional giving coverings to create a odds functional that outputs odds martingales for all odds functions. Therefore, odds martingales can be used instead of odds supermartingales to define $\mathrm{\Delta}$-strong measure zero. It  follows that dropping the super on other characterizations also makes no impact. This is similar to effective dimension where it was originally defined with supergales before Hitchcock \cite{hitchcock2003gales} proved gales suffice.

    \begin{corollary}
        Let $\mathrm{\Delta}\in \mathrm{\Gamma}$. Then for every  $X\subseteq \mathbf{C}$ there is an odds functional $F$ succeeding on $X$ if and only if there is an odds functional $F'$ succeeding on $X$ where $F'(O)$ is an odds martingale for all odds functions $O$. Moreover, there is a universal algorithmic martingale functional.
    \end{corollary}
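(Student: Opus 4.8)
The plan is to obtain both assertions from the covering characterization of $\mathrm{\Delta}$-strong measure zero established in the preceding lemma, together with the universal algorithmic supermartingale, without ever converting a supermartingale into a martingale by redistributing its deficit.

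For the equivalence, one direction is immediate: every odds martingale satisfies the supermartingale inequality \eqref{smc}, so any odds functional $F'$ whose value $F'(O)$ is an $O$-martingale for all $O$ is in particular an odds functional whose values are $O$-supermartingales, and hence serves as a witnessing $F$. For the converse I would appeal to the preceding covering lemma: if some odds functional $F$ succeeds on $X$, then $X$ has $\mathrm{\Delta}$-strong measure zero, so $X$ is strongly $\mathrm{\Delta}$-coverable, and the construction in the backward direction of that lemma's proof assigns to each odds function $O$ the value $\sum_{i\in\mathbb{N}} d_i$, a convergent sum (bounded at $\lambda$ by $4$) of $O$-martingales. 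Since a convergent sum of $O$-martingales is again an $O$-martingale, this functional is a martingale-valued $F'$ in $\mathrm{\Delta}$ succeeding on $X$, as required.

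For the ``moreover'', I would feed the universal object through this same machinery. The universal algorithmic functional $\mathbf{F}$ succeeds on every set of algorithmic strong measure zero; by Theorem \ref{NCR} these are exactly the subsets of $\mathrm{NCR}$, so $\mathbf{F}$ succeeds on all of $\mathrm{NCR}$. Transferring $\mathbf{F}$ into odds form via Lemma \ref{SMZ-SP} (valid at the algorithmic level by Remark \ref{robust}) and regarding it as a witness that $\mathrm{NCR}$ is strongly coverable, the forward direction of the covering lemma produces a covering functional for $\mathrm{NCR}$, and the backward construction then yields an algorithmic odds functional $\mathbf{G}$ with each $\mathbf{G}(O)$ an $O$-martingale that succeeds on all of $\mathrm{NCR}$. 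This $\mathbf{G}$ is the desired universal martingale functional: it succeeds on a set $X$ precisely when $X$ has algorithmic strong measure zero, since any martingale functional succeeding on $X$ already forces $X\subseteq\mathrm{NCR}$, on all of which $\mathbf{G}$ succeeds.

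The step I expect to require the most care is lower semicomputability of the martingale values. The direct approach---taking the deficit in \eqref{smc} for $\mathbf{d}^O$ and redistributing it to the descendants so as to restore equality---does give a genuine $O$-martingale dominating $\mathbf{d}^O$ pointwise, but that redistribution subtracts a quantity approximable only from above, so its values need not be lower semicomputable. Routing through coverings is exactly what avoids this: in the backward construction each $d_i$ is built by only ever increasing capital as further covering strings $M^{A_i}(m)=w$ are discovered, so its rational approximations are monotone nondecreasing and $\mathbf{G}$ is genuinely lower semicomputable. Confirming that this purely additive construction still dominates, in the success sense, every algorithmic martingale functional is the one point that must be checked, while the equivalence and the identification of the success class with $\mathrm{NCR}$ follow directly from results already established.
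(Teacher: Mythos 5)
Your proposal is correct and follows essentially the same route as the paper: the paper derives this corollary directly from the observation that the backward direction of the covering lemma builds each $d_i$ (and hence the convergent sum $\sum_i d_i$) as an $O$-martingale by only ever adding capital, so composing ``supermartingale functional $\Rightarrow$ strongly coverable $\Rightarrow$ martingale functional'' gives both the equivalence and, applied to the universal algorithmic functional, the universal martingale functional. Your closing remark about why the naive deficit-redistribution fails for lower semicomputability correctly identifies the reason the paper routes through coverings, and the detour through $\mathrm{NCR}$ in your ``moreover'' step is harmless (one could equally apply the construction to the maximal algorithmic strong measure zero set directly).
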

    
We will now use the covering characterization to investigate the complexity of sequences with computable strong measure zero. To do so, we start by proving a correspondence theorem for computable and algorithmic strong measure zero that is an analogue to the result of Hitchcock \cite{hitchcock2005correspondence} for computable and algorithmic dimension. We will use the following characterization of $\mathrm{\Pi}_1^0$ and $\mathrm{\Sigma}_2^0$ definable classes.

\begin{definition}
Let $X\subseteq \mathbf{C}$.
\begin{itemize}
    \item $X\in \mathrm{\Pi}_1^0$ if there is a computable tree $T\subseteq\{0,1\}^*$ whose infinite paths are the sequences $S\in X$.
    
    \item $X\in \mathrm{\Sigma}_2^0$ if there is a computable function $f:\mathbb{N}\times \{0,1\}^*\to \{0,1\}$ where ${T_i=\{w \mid f(i,w)=1\}}$ are computable trees and ${X=\bigcup_{i\in \mathbb{N}}X_i}$ is the union of the $\mathrm{\Pi}_1^0$ sets corresponding to the $T_i$'s.
\end{itemize}

\end{definition}

\begin{lemma}\label{closed covers}
    If a $\mathrm{\Pi}_1^0$ class $X$ has an $A$ covering then it has a computable $A$ covering.
\end{lemma}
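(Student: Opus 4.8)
The plan is to exploit the compactness of the closed class $X$ together with the fact that, for a computable tree, the property of a finite family of cylinders covering the infinite paths of the tree is computably enumerable. Fix a computable tree $T$ whose infinite paths are exactly the members of $X$, and suppose $f^A$ is the given $A$ covering, so that the cylinders $\{C_{f^A(n)} : n\in A\}$ form an open cover of $X$ with $|f^A(n)|=n$ for $n\in A$. Since $X$ is $\mathrm{\Pi}_1^0$ it is a closed, hence compact, subset of $\mathbf{C}$, so finitely many of these cylinders already cover $X$. Consequently there exist a finite set $F\subseteq A$ and strings $w_n$ with $|w_n|=n$ for $n\in F$ such that every $S\in X$ has $w_n\sqsubseteq S$ for some $n\in F$. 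The point is only that such a finite cover \emph{exists}; I will locate one computably from $A$ without reference to the original $f$.

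The key technical step is that coverage is semidecidable. Given an explicit finite family of strings $w_{n_1},\dots,w_{n_k}$, consider the subtree $T'\subseteq T$ consisting of those nodes of $T$ that have none of the $w_{n_j}$ as a prefix; since $T$ is computable and the $w_{n_j}$ are given explicitly, $T'$ is a computable, prefix-closed tree. A sequence $S\in X$ fails to be covered precisely when every prefix of $S$ lies in $T'$, that is, when $S$ is an infinite path of $T'$; and by König's lemma the binary tree $T'$ has an infinite path if and only if it is infinite. Hence the family covers $X$ if and only if $T'$ has no nodes beyond some finite level, a condition detected by computing the level sets of $T'$ one at a time and halting when an empty level appears. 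Coverage is thus a $\mathrm{\Sigma}_1^0$ event, confirmable by a halting computation whenever it holds and never falsely confirmed.

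Combining these, I would run the following procedure computable from $A$: using the oracle to enumerate the elements of $A$, dovetail over all finite tuples of strings whose lengths are distinct elements of $A$, running the coverage verifier on each candidate tuple. By the compactness argument of the first paragraph a verifying candidate exists, so the search halts and returns a finite cover $\{w_n : n\in F\}$ with $F\subseteq A$ finite and $|w_n|=n$. I then define the covering by $g^A(n)=w_n$ for $n\in F$ and $g^A(n)=0^n$ for $n\in A\setminus F$; both clauses output a string of the correct length $n$, so condition 1 of the covering definition holds, while condition 2 holds because $F\subseteq A$ already covers $X$. Since the search terminates and $T$ is computable, $g$ is computable with oracle $A$, giving the required computable $A$ covering.

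The main obstacle is the verification step: one must argue carefully that non-coverage of the $\mathrm{\Pi}_1^0$ class is captured exactly by the existence of an infinite path through the computable avoidance tree $T'$, invoking König's lemma and the prefix-closure of $T'$, and that the procedure relies only on the positive (halting) side of this semidecision so that no candidate failing to cover $X$ is ever accepted. Everything else---enumerating finite tuples with lengths drawn from $A$, dovetailing the verifiers, and padding the unused lengths with $0^n$---is routine, and the termination of the search is guaranteed solely by the compactness-derived existence of a finite cover.
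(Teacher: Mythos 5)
Your proposal is correct and follows essentially the same route as the paper: use compactness of the $\mathrm{\Pi}_1^0$ class to guarantee a finite subcover with lengths in $A$, observe via K\H{o}nig's lemma that coverage by a given finite family is semidecidable, and dovetail over candidate finite covers until one verifies. Your write-up merely fills in details the paper leaves implicit (the explicit avoidance subtree $T'$ and the padding of unused lengths with $0^n$).
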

\begin{proof}
    Since $X$ is a closed subset of a compact space, it is itself compact and hence there is a finite cover of $A$. Let $f$ be the computable function that on each input dovetails in a fixed manner over all  potential finite $A$ covers until it finds one that succeeds and outputs accordingly.
    Note that verifying a covering is semidecidable since there can only be finitely many strings in $T$ that are not inside the cover by Kőnig's lemma.
\end{proof}

\begin{lemma}\label{correspondence}
     Let $X$ be a $\mathrm{\Sigma}_2^0$ set. Then the following are equivalent.
   \begin{enumerate}
       \item $X$ is strongly coverable
       \item $X$ is algorithmically strongly coverable
       \item $X$ is computably strongly coverable
   \end{enumerate}
   Moreover 1 and 2 are equivalent for every union of $\mathrm{\Pi}_1^0$ sets.
\end{lemma}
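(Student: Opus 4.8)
The plan is to prove the chain $3 \Rightarrow 2 \Rightarrow 1$ trivially and then obtain the two nontrivial directions from a single search construction, with the distinction between the algorithmic and computable conclusions coming entirely from whether searches are guaranteed to halt. First note that $3 \Rightarrow 2 \Rightarrow 1$ is immediate from the resource inclusions $\mathrm{comp}\subseteq\text{algorithmic}\subseteq\mathrm{all}$: a computable covering functional is in particular a partial recursive (algorithmic) one, and an algorithmic one is in particular classical. The content is therefore in $1 \Rightarrow 2$ and $1 \Rightarrow 3$. The key structural fact I would use throughout is that a covering assigns \emph{at most one} string of each length $n\in A$, so a cover of a $\mathrm{\Pi}_1^0$ (hence compact) class $X_i$ by cylinders with lengths in an infinite set $B$ can always be thinned to a \emph{finite} subcover using finitely many distinct lengths in $B$; and that whether a fixed finite set $C$ of strings covers a $\mathrm{\Pi}_1^0$ class is semidecidable from its tree, which is exactly the verification step in the proof of Lemma \ref{closed covers} (by Kőnig's lemma only finitely many tree nodes can fail to be covered).

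For the \emph{moreover} statement $1 \Rightarrow 2$, write $X=\bigcup_{i}X_i$ with each $X_i$ a $\mathrm{\Pi}_1^0$ class, and recall that there are only countably many $\mathrm{\Pi}_1^0$ classes, enumerated by computable tree indices $e\mapsto P_e$. Given the oracle $A$, I would computably split $A$ into infinitely many infinite blocks $B_e$, one per index. For each $e$, the functional semidecidably searches (using the tree for $P_e$, as in Lemma \ref{closed covers}) for a finite set of strings with distinct lengths in $B_e$ covering $P_e$; when one is found it outputs those strings at their lengths and leaves $f^A$ \emph{undefined} on the remaining elements of $B_e$, and if no cover is ever found it simply never halts on $B_e$. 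This yields a single partial recursive functional. Correctness holds because each $X_i$ equals some $P_{e_i}$; since $X_i\subseteq X$ and $X$ is classically coverable, $X$ (hence $X_i$) has a $B_{e_i}$-covering, and by compactness a finite one, so the search on block $B_{e_i}$ \emph{does} halt and $X_i$ is covered. We never need to verify $P_e\subseteq X$: covering additional points is harmless, and indices whose classes have no finite $B_e$-cover (such as the one for all of $\mathbf{C}$) contribute only undefined outputs. Together with $2\Rightarrow 1$ this gives $1\iff 2$ for arbitrary unions of $\mathrm{\Pi}_1^0$ sets.

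For $1 \Rightarrow 3$ I would run the same construction but exploit the $\mathrm{\Sigma}_2^0$ hypothesis, which supplies a \emph{uniformly} computable sequence of trees $T_i$ with $X=\bigcup_i X_i$. Now partition $A$ into infinite blocks $A_i$ and, on $A_i$, search for a finite $A_i$-cover of $X_i$ using the uniform tree $T_i$. Because $X_i\subseteq X$ is classically coverable and compact, such a finite cover \emph{exists}, so every search halts; I then fill the unused lengths $n\in A_i$ with the default value $0^n$ to make $f^A$ total. Uniformity of the trees makes the whole procedure a single \emph{computable} functional, giving $X$ computably strongly coverable, and with $3\Rightarrow 2\Rightarrow 1$ all three statements are equivalent. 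The main obstacle, and the crux distinguishing $2$ from $3$, is exactly the guaranteed halting of the finite-cover searches: in the non-uniform case we can search over all $\mathrm{\Pi}_1^0$ classes only because the algorithmic setting permits undefined outputs on the (possibly undecidably many) blocks where no finite cover exists, whereas the $\mathrm{\Sigma}_2^0$ uniformity is precisely what upgrades "the relevant searches halt" into a total, hence computable, covering.
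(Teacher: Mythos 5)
Your proposal is correct. For the $1\Rightarrow 3$ direction your argument is essentially the paper's: both exploit the uniform trees $T_i$ from the $\mathrm{\Sigma}_2^0$ hypothesis, compactness to get finite $A$-covers of each $X_i$, and the semidecidable cover-verification of Lemma \ref{closed covers}; the only cosmetic difference is that you partition $A$ into blocks $A_i$ up front, whereas the paper proceeds sequentially, covering $X_0$, deleting the finitely many used lengths, and recursing. For the \emph{moreover} clause, however, you take a genuinely different route. The paper argues abstractly: each $\mathrm{\Pi}_1^0$ piece is computably (hence algorithmically) strongly coverable by Lemma \ref{closed covers}, and an arbitrary union of algorithmic strong measure zero sets is algorithmic strong measure zero because the algorithmic level has a universal functional, so the property is pointwise. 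You instead build one explicit partial recursive covering functional by enumerating \emph{all} $\mathrm{\Pi}_1^0$ classes $P_e$, reserving an infinite block $B_e$ of the oracle for each, and letting the search on $B_e$ diverge when $P_e$ has no finite $B_e$-cover. Your construction is more self-contained --- it never leaves the covering framework or invokes the universal semimeasure --- and it isolates exactly where partiality is indispensable, which explains why the same argument cannot yield computable coverability without the $\mathrm{\Sigma}_2^0$ uniformity; the cost is that you must appeal to an effective enumeration of all $\mathrm{\Pi}_1^0$ classes (via co-c.e.\ trees, say, since one cannot effectively list the computable trees themselves --- a point worth a sentence), whereas the paper's union argument needs no such enumeration. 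Both proofs are sound.
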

\begin{proof}
It is clear that $3 \implies 2 \implies 1$. To see that $1 \implies 3$ let $F$ be the computable functional that given $A$ first finds an $A$ covering of the first $\mathrm{\Pi}_1^0$ set in $X$ as in Lemma \ref{closed covers}. Then it removes the finitely many naturals in the covering to get $A'$ and continues until the input $n$ is in a cover.  For the second part, note that any union of algorithmic strong measure zero sets has algorithmic strong measure zero.
\end{proof}

Using this we can see that the computable strong measure zero sets contain sequences that are as complex as the algorithmic strong measure zero sequences in a particular sense by using the following result.

\begin{theorem}[Cenzer et al. \cite{cenzer1986members}]\label{jump}
   Let $\alpha$ be a computable ordinal. Then there is a countable $\mathrm{\Pi}_1^0$ class containing $x$ with $x\equiv_\mathrm{T}\emptyset^{(\alpha)}.$
\end{theorem}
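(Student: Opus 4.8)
The plan is to build the class directly by effective transfinite recursion along a fixed notation $a\in\mathcal{O}$ for $\alpha$, producing for each $\beta\le\alpha$ a \emph{computable} tree $T_\beta$ whose path set $[T_\beta]$ is countable and which contains a distinguished path $x_\beta$ with $x_\beta\equiv_\mathrm{T}\emptyset^{(\beta)}$. The guiding heuristic is the correspondence between Cantor--Bendixson rank and iterated jumps: an isolated (rank $0$) path of a computable tree is computable, since along it exactly one of the two one-bit extensions eventually becomes dead ($\Sigma^0_1$), so each additional derivative of the tree should cost exactly one jump, and a rank-$\beta$ path should sit at the level of $\emptyset^{(\beta)}$. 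Concretely, the distinguished path will code $\emptyset^{(\beta)}$ along its bits, giving $x_\beta\ge_\mathrm{T}\emptyset^{(\beta)}$, and the coding will be arranged so that its \emph{settled} values are recoverable from $\emptyset^{(\beta)}$, giving $x_\beta\le_\mathrm{T}\emptyset^{(\beta)}$; together these yield the equivalence.

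For the base case $\beta=0$, take the tree with the single path $0^\infty$. For the successor step, suppose $T_\beta$ and $x_\beta$ are in hand and we want $T_{\beta+1}$ to code $\emptyset^{(\beta+1)}\equiv_\mathrm{T}(x_\beta)'$. The key point is that membership in the jump is a $\Sigma^0_1(x_\beta)$ event and $x_\beta$ is available \emph{along the path} of $T_\beta$, so the jump can be coded without relativizing the ambient tree to a noncomputable oracle: $T_{\beta+1}$ refines $T_\beta$ by appending, in successive blocks indexed by $e$, a record of a finitely-changing computable guess at whether $\Phi_e^{x_\beta}(e)\downarrow$, using the prefix of the path already produced as the oracle approximation. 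A guess that is never confirmed but never refuted forces a non-distinguished infinite branch which follows a unique surviving extension and is therefore computable, so these branches keep $[T_{\beta+1}]$ countable while raising the rank of the distinguished path by one. That path then computes $(x_\beta)'\equiv_\mathrm{T}\emptyset^{(\beta+1)}$, and conversely $\emptyset^{(\beta+1)}$ reads off the settled guesses and hence computes it.

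The limit step is where the real work lies, and I expect it to be the main obstacle. Fix the fundamental sequence $\beta_0<\beta_1<\cdots\to\lambda$ supplied by the notation for $\lambda$, and recall $\emptyset^{(\lambda)}\equiv_\mathrm{T}\bigoplus_n\emptyset^{(\beta_n)}$. I would interleave the trees $T_{\beta_n}$ on disjoint coordinates so that a single distinguished path simultaneously carries all of the paths $x_{\beta_n}$, hence computes the join and is computed by it, and has rank $\lambda$. Turning this into an honest computable tree with a countable path set requires that the passage $n\mapsto T_{\beta_n}$ be \emph{uniformly} computable, which is exactly what effective transfinite recursion over Kleene's $\mathcal{O}$ (Spector) delivers. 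The difficulties to discharge here are precisely the uniformity and limit bookkeeping: one must guarantee that gluing infinitely many countable classes introduces no perfect subclass, so the amalgam stays countable; that the coded object is \emph{exactly} the join $\emptyset^{(\lambda)}$ in both directions of Turing reducibility; and that every stage is carried out uniformly in the single fixed notation for $\alpha$, so that the final tree $T_\alpha$ is genuinely computable rather than merely $\emptyset^{(\alpha)}$-computable.
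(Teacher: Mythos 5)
This statement is imported by the paper from Cenzer--Clote--Smith--Soare--Wainer \cite{cenzer1986members} and is not proved in the paper, so there is no internal proof to compare against; I can only assess your argument on its own terms. Your overall architecture --- effective transfinite recursion along a notation, jump coding along the distinguished path, Cantor--Bendixson rank tracking the number of jumps --- is the right framework and is in the spirit of the cited construction. The successor step is essentially workable but is stated too loosely at the one point where it is delicate: if block $e$ merely records the history of the computable approximation to ``$\Phi_e^{x_\beta}(e)\downarrow$'', the resulting path is computable from $x_\beta$ and the degree does not increase; if it records only a limit bit with no witness, the positive guess is irrefutable and the class picks up a copy of $\{g : g \supseteq (x_\beta)'\}$, which is uncountable. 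You need the standard fix of recording the limit together with a modulus and killing both refuted negative guesses and premature or belated witnesses, so that each convergent $e$ admits exactly one surviving witness and each stalled block contributes only one junk path per path of $T_\beta$.

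The genuine gap is the limit step, and it fails concretely rather than merely being unfinished. Since $x_{\beta_n}\equiv_\mathrm{T}\emptyset^{(\beta_n)}$ is noncomputable for $\beta_n\geq 1$, and isolated paths of computable trees are computable, each $[T_{\beta_n}]$ is an infinite (indeed perfect-kernel-free but infinite) closed set; interleaving the $T_{\beta_n}$ on disjoint coordinates therefore yields a path set containing a homeomorphic copy of $\prod_n S_n$ with every $|S_n|\geq 2$, i.e.\ a perfect set. So the amalgam you describe is necessarily uncountable, and no uniformity bookkeeping can repair that: the components must be coupled, not taken independently. The actual construction has to restrict the limit-level class so that deviation from the distinguished behavior is only possible in a ``scattered'' fashion (for instance, by nesting the components so that opening up the $(n+1)$-st copy requires a commitment in the $n$-th that kills all but countably many continuations). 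Identifying that the limit step is ``where the real work lies'' is correct, but as written the step you propose produces the wrong object, so the proof is not complete.
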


Combining Theorem \ref{jump} and Lemma \ref{correspondence} we have the following.
\begin{corollary}
   for every computable ordinal $\alpha$ there is a $x$ with  $x\equiv_\mathrm{T}\emptyset^{(\alpha)}$ and $\{x\}$ having computable strong measure zero.
\end{corollary}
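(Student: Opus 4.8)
The plan is to produce $x$ not as an isolated singleton but as a member of a well-chosen countable $\mathrm{\Pi}_1^0$ class, and then transfer strong measure zero from the classical level down to the computable level through the covering machinery. First I would invoke Theorem \ref{jump} to obtain, for the given computable ordinal $\alpha$, a countable $\mathrm{\Pi}_1^0$ class $X$ that contains a point $x$ with $x\equiv_\mathrm{T}\emptyset^{(\alpha)}$. The reason for working with $X$ rather than with $\{x\}$ directly is that $\{x\}$ is itself a $\mathrm{\Pi}_1^0$ class only when $x$ is computable, which fails here; the surrounding class $X$ supplies the effectivity we need while still being countable.

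Next I would observe that $X$, being countable, has classical strong measure zero, and hence by the covering characterization (the covering lemma at the level $\mathrm{\Delta}=\mathrm{all}$) is strongly coverable. Since a $\mathrm{\Pi}_1^0$ class is in particular a $\mathrm{\Sigma}_2^0$ set -- take its defining computable tree as $T_0$ and empty trees for the remaining indices -- Lemma \ref{correspondence} applies and upgrades classical coverability to computable coverability, namely $1\implies 3$. Applying the covering lemma once more, this time at the computable level, yields that $X$ has computable strong measure zero.

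Finally, computable strong measure zero is inherited by subsets: a single odds functional witnessing success on all of $X$ in particular succeeds on the path $x\in X$, so $\{x\}$ has computable strong measure zero. Since simultaneously $x\equiv_\mathrm{T}\emptyset^{(\alpha)}$, this is exactly the desired sequence.

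The main obstacle, and the reason the statement is nontrivial, is precisely that $\{x\}$ is \emph{not} directly accessible by effective coverings -- one cannot compute coverings of a noncomputable point. The correspondence principle of Lemma \ref{correspondence} is what dissolves this difficulty, so the only point requiring care is checking that the ambient class $X$ genuinely satisfies the $\mathrm{\Sigma}_2^0$ hypothesis (equivalently, that a $\mathrm{\Pi}_1^0$ class is $\mathrm{\Sigma}_2^0$) so that the classical-to-computable transfer is licensed; the remaining steps are routine invocations of the covering lemma and of countability.
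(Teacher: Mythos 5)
Your proposal is correct and follows exactly the paper's intended argument: the paper's one-line proof ("combining Theorem \ref{jump} and Lemma \ref{correspondence}") is precisely the chain you spell out -- embed $x$ in a countable $\mathrm{\Pi}_1^0$ class, note countability gives classical strong measure zero hence strong coverability, use the correspondence lemma (a $\mathrm{\Pi}_1^0$ class being $\mathrm{\Sigma}_2^0$) to descend to computable coverability, and pass to the subset $\{x\}$. Nothing further is needed.
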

Kjos-Hanssen and Montalbán \cite{AntonioThesis} had proved this result for NCR using Theorem \ref{jump} as well. Reimann and Slaman \cite{reimann2015measures} showed that this is the best that can be done by proving if $x$ is not hyperarithmetic then it is not in NCR.

\section{Strong Packing Dimension Zero}
In this section we look at what happens if success is required in the limit inferior. We will use the following definition, but it is equivalent to our other characterizations of strong measure zero with a limsup replaced for a liminf.

\begin{definition} 
    Let $\mathrm{\Delta}\in \mathrm{\Gamma}$. A set $X\subseteq \mathbf{C}$ has \emph{$\mathrm{\Delta}$-strong packing dimension zero} if there is a $\mathrm{\Delta}$-computable functional 
    \[F:(\{0,1\}^*\to \mathbb{Q}\cap[1,2]) \rightarrow (\{0,1\}^*\to[0,\infty))\]  
     such that for every acceptable odds function ${O:\{0,1\}^*\to\mathbb{Q}\cap[1,2]}$, $F(O)$ is an $O$-supermartingale with 
     \[\liminf_{n\to \infty} d(S[0\dots n])=\infty\]
    for all $S\in X$.
\end{definition}
For $\mathrm{\Delta}\in \mathrm{\Gamma}_R$ it's easy to see that the change does not affect the sets, so we will focus on the computable and algorithmic versions. There exists another formulation of strong packing dimension zero by Zindulka \cite{zindulka2012small} using box dimensions in the classical setting. He proved a characterization in terms of coverings that inspires the following definition where they can be shown equivalent for $\mathrm{\Delta}$=all. Compared to the earlier coverings, we will now require that each sequence is covered infinitely often and at a certain frequency.

\begin{definition}\label{SPDZD}
    Let $\mathrm{\Delta}\in \mathrm{\Gamma}_{RCA}$, $A=\{a_0,a_1,a_2, \dots\}\subseteq \mathbb{N}$ be an infinite set and $X\subseteq \mathbf{C}$. A function \\${f^A:\mathbb{N}\to \{0,1\}^*}$ is a \emph{frequent $\mathrm{\Delta}$-$A$ covering of $X$} if $f\in \mathrm{\Delta}$ with oracle $A:\mathbb{N}\to \{0,1\}^*$ satisfies the following.
    \begin{enumerate}
        \item if $n\in A$ then $f^A(n)=w$ for some $w$ with $|w|=n$.
        \item If $A_0=\{a_0,\}, A_1=\{a_1,a_2\}, \dots A_i=\{a_{\frac{i(i+1)}{2}},a_{\frac{i(i+1)}{2}+1}, \dots a_{\frac{i(i+1)}{2}+i}\}$ then
        for every $S\in X$ there exits only finitely many $i$ where
        \[f^A(a_n)\not \sqsubseteq S\]
        for all $a_n\in A_i$.
    \end{enumerate}
    We say that $X$ is \emph{$\mathrm{\Delta}$-frequently coverable} if there is a functional $F\in \mathrm{\Delta}$ such that $F(A)$ is a frequent $\mathrm{\Delta}$-$A$ covering of $X$ for all infinite $A\subseteq\mathbb{N}$.

     Again at the algorithmic level $\mathrm{\Delta}$, the above definitions hold for $f^A$ being a partial recursive function that can also be undefined in condition 1 above. Specifically, $f$ corresponds to a oracle Turing machine that outputs a string of length $n$ or does not halt on every $n\in A$.
\end{definition}

\begin{lemma}
    A set $X$ has algorithmic (computable) strong packing dimension zero if and only if it is  algorithmically (computably) frequently coverable.
\end{lemma}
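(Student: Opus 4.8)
The plan is to mirror the proof of the covering characterization of $\mathrm{\Delta}$-strong measure zero, replacing ``covered at some length'' by ``covered in almost every block'' so that the $\limsup$ success of an odds martingale is upgraded to the $\liminf$ success required for strong packing dimension zero. Throughout I exploit the following observation: since the lengths in $A=\{a_0,a_1,\dots\}$ are strictly increasing, if $S$ is covered in two blocks $A_i$ and $A_{i+1}$ by prefixes $w_i,w_{i+1}\sqsubseteq S$, then automatically $w_i\sqsubseteq w_{i+1}$; thus the witnesses arising from a frequent covering of a fixed $S$ form a nested chain of prefixes of $S$ that is cofinal in $S$. I prove both implications for $\mathrm{\Delta}$ equal to computable and to lower semicomputable, handling the partiality of $f^A$ at the algorithmic level exactly as in the strong-measure-zero lemma.

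For the forward direction (strong packing dimension zero implies frequent coverability), I start from an odds functional $F$ with $\liminf_n F^O(S[0\dots n])=\infty$ on $X$ and, given an oracle $A$, build an odds function $O$ that takes the value $2$ only on a sparse set of lengths, chosen so that the number $k_i$ of doublings up to the largest length $L_i=a_{i(i+1)/2+i}$ of block $A_i$ satisfies $2^{k_i}\le i+1$ while still $k_i\to\infty$, which is possible since $\log(i+1)\to\infty$ (and makes $O$ acceptable). Because $\sum_{|w|=L_i}F^O(w)\mu_O(w)\le F^O(\lambda)$ and $\mu_O(w)=2^{-k_i}$ is constant on length $L_i$, fewer than $2^{k_i}\le i+1$ strings $w$ of length $L_i$ have $F^O(w)\ge 1$, so the $i+1$ slots of block $A_i$ suffice: at slot $a_n\in A_i$ I output the length-$a_n$ prefix of one such large string. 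Since $\liminf F^O=\infty$, for all but finitely many $i$ we have $F^O(S[0\dots L_i])\ge 1$, so $S[0\dots L_i]$ is among the enumerated strings and some slot of $A_i$ outputs a prefix of $S$; hence $X$ is frequently coverable.

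For the reverse direction (frequent coverability implies strong packing dimension zero), I adapt the sum-of-martingales construction used for strong measure zero. Given $O$, I feed the covering functional an oracle $A$ whose lengths are values $f_O(R_k)$, where $f_O(N)$ is the least length on which the odds product exceeds $N$ and $R_k$ grows fast enough (for instance $R_k=2^{k^2}$) that the odds product between any two lengths lying in consecutive blocks grows by far more than the block sizes. I then define $d^{(j)}$ to inject capital $c_j=2^{-j}$ at block $j$ and bet it all-in \emph{forward} along the tree of covers, splitting at each block transition among those covers of the next block that extend the current one (at most the size of that block). Along the nested chain $w_{i_0}\sqsubseteq w_{i_0+1}\sqsubseteq\cdots$ witnessing a frequent cover of a fixed $S$, the on-chain capital is multiplied each block by (odds product over the gap)$\big/$(number of competing covers); by the choice of $R_k$ this ratio tends to infinity, and since every all-in segment only increases capital, the value never dips below the post-split capital of the previous block, which also tends to infinity. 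Thus $\liminf_m d^{(j)}(S[0\dots m])=\infty$ for every $j$ past the finite exceptional set of $S$, and $d=\sum_j d^{(j)}$ is, by the Observation on summable martingales, an $O$-martingale with $d(\lambda)\le\sum_j c_j<\infty$ that achieves $\liminf$ success on all of $X$.

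The main obstacle is the $\liminf$ control in this reverse direction: unlike the $\limsup$ argument for strong measure zero, the martingale does not know which slot of a block actually covers $S$ and so must hedge across all covers in the block, and it cannot ``hold'' capital across the gaps between successive covered prefixes because the odds exceed $1$. Both difficulties are resolved by the nestedness of the covers together with the freedom to select $A$ through $f_O$: making the per-block odds growth dominate both the hedging loss and the gap length is exactly what converts ``covered in almost every block'' into ``$\liminf=\infty$.'' All constructions stay within $\mathrm{\Delta}$, and the partial-function conventions at the algorithmic level are treated as in the strong-measure-zero case.
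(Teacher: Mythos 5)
Your overall architecture matches the paper's in both directions: in the forward direction you use the supermartingale mass bound at the top length of each block to cap the number of candidate strings by the block size, and in the reverse direction you thread capital forward along the nested chain of covers, splitting at each block transition and letting the odds growth between blocks dominate the hedging loss, summing over injection points. The forward direction is sound (the paper concentrates odds $\frac{n+2}{n+1}$ at the last length of each block rather than using sparse doublings in $\{1,2\}$, but both yield the count bound and acceptability).

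There is, however, a genuine gap in the reverse direction, in your choice of cut lengths. You take the oracle lengths to be $f_O(R_k)$ with $R_k=2^{k^2}$ fixed in advance of $O$, and assert that the odds product along a path between consecutive cuts exceeds the block size. But $f_O(N)$ only controls the \emph{minimum} over all paths of the product at that length; it says nothing about how much a \emph{particular} path's product grows between $f_O(R_{k-1})$ and $f_O(R_k)$. If the odds along some $S$ are front-loaded --- say $\prod_{i<m}O(S[0\dots i])$ already exceeds $R_k$ at $m=f_O(R_{k-1})$ and $O(S[0\dots i])=1$ throughout the gap, while other, slower paths are what force $f_O(R_k)$ to be large --- then the gap product along $S$ is $1$, the per-block division by the number of competing covers is uncompensated, and the capital along the chain tends to $0$ rather than $\infty$, so the $\liminf$ claim fails. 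The fix is to define the cuts recursively against the \emph{maximum} product at the previous cut, as the paper does: with $g_O(\ell)$ the maximum odds product over strings of length $\ell$, set $f_O(n)=\min\{m \mid \prod_{i<m}O(w[0\dots i])\ge 2(n+1)\,g_O(f_O(n-1))\text{ for all }w\}$; then \emph{every} path gains a factor of at least $2(n+1)$ between consecutive cuts, which dominates the split. With that single change your argument goes through and coincides with the paper's proof.
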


\begin{proof}
    First suppose $X$ has algorithmic (computable) strong packing dimension zero and let $F$ be a odds functional witnessing this. Without loss of generality suppose $F^O(\lambda)<1$ for all odds functions $O$. We will define a Turing machine $M$ that works as follows on input $A=\{a_0,a_1,a_2,\dots\}\subseteq \mathbb{N}$ in standard order. Let $f_A:\mathbb{N}\rightarrow\mathbb{N}$ be the function 
    \[f_A(n)=a_{\frac{n(n+1)}{2}+n}\]
    and consider the odds function
    \[
    O(w)= 
\begin{cases}
    \frac{n+2}{n+1},& \text{if } |w|=f_A(n)\\
    1,              & \text{otherwise}
\end{cases}
\]
Then let $M^A$ be the following algorithm.
\begin{algorithm}[H]
    \caption{}
    \begin{algorithmic}[1]
    \STATE On input $a_i\in A$:
    \STATE Compute the minimum $n\in \mathbb{N}$ such that $a_i\in A_n$.
    \STATE Let $a_i$ be the $c^{th}$ element in $A_n$.
    \STATE Run $F^O$ and for each $w$ with $|w|=f_A(n)$ where $F^O(w)> 1$ is found set $c=c-1$, \STATE Output $w[0\dots a_i]$ when $c$ gets set to zero in step 4. If $\mathrm{\Delta}$=comp then output $0^{a_i}$ if every $w$ with $|w|=f_A(n)$ has been checked and $c>0$.
    \end{algorithmic}
    \end{algorithm}
Note that by construction the maximum number of strings of $w$ with ${|w|=f_A(n)}$ such that $F^O(w)>1$ is at most 
\[(\prod_{i=0}^n\frac{i+2}{i+1})-1=n+1\]
 in order for the supermartingale condition $\eqref{smc}$ to be satisfied. Therefore each of these $n+1$ strings can be output for the set $A_i$. Thus, for every $S\in X$, $F^O(S[0...n])>1$ for all but finitely many $n\in \mathrm{Range}(f_A)$ so it must have a prefix in all but finitely many $A_i$.

For the other direction let $M$ be an oracle Turing machine showing that $X$ is $\mathrm{\Delta}$-frequently coverable. We will define a odds functional that succeeds on $X$ given odds function $O$ as follows. Let 
\[g_O(n)=\max\{\prod_{i=0}^{n-1}O(w[0\dots i])\mid  w\in \{0,1\}^n\}\]
be the maximum product of odds along any string of length $n$ (with $g_O(0)=1$). Define a function $f_O:\mathbb{N}\to \mathbb{N}$ by
\[f_O(n)=\min\{m\in \mathbb{N}\mid \prod_{i=0 }^{m-1}O(w[0\dots i])\geq 2(n+1)\cdot g_O(f_O(n-1)) \text{ for all } w\in \{0,1\}^m\}\]
using $f_O(-1)=0$. This is the minimum length $m$ so that the product of odds along every path of length $m$ is at least $2(n+1)$ times the amount of any string of length $f_O(n-1)$. Note this is possible by compactness. Let 
\[A=\bigcup_{n\in \mathbb{N}} \{f_O(n), f_O(n)+1,\dots f_O(n)+n\}\]
and $A_0,A_1,A_2\dots$ be as stated in the lemma. We will define a $O$-supermartingale $d$ in steps where $d_i$ is the $O$-supermartingale after step $i$ and $d=\lim_{n\to \infty}d_i$. Starting with $d_{-1}(w)=0$ for all $w$, let $d_{i+1}=d_i$ with the following changes.

\begin{itemize}
    \item For each each $a_j\in A_{i+1}$ where $M^A(a_j)=w$:
    \begin{itemize}
        \item Add $\frac{1}{(i+2)2^{(i+2)}}$ to $d_{i+1}(\lambda)$. 
        \item Add $\frac{1}{(i+2)2^{(i+2)}}  \prod_{j=0}^{k}O(w[0\dots j])$ to $d_{i+1}(w[0\dots j])$ \\
        for $0\leq k < f_O(i+1)$.
        \item For each $x$ where $x=M^A(a_k)$ for some $a_k\in A_{i}$ and $x\sqsubset w$:
        \begin{itemize}
            \item add $\frac{1}{i+1}d_{i}(x) \prod_{j=f_O(i)}^{k}O(w[0\dots j])$
    to $d_{i+1}(w[0\dots k])$ \\for all ${f_O(i)\leq k < f_O(i+1)}$.
        \end{itemize}
    \end{itemize}
\end{itemize}
Note that there are at most $i+1$ extensions of a $x$ in $A_i$ to a $w$ in $A_{i+1}$ so this results in a $O$-supermartingale.
At the computable level, $d$ can be computed to arbitrary precision by computing $d_i$ exactly after $i$ is large enough. At the algorithmic level, an algorithm can keep track of the seen outputs and perform the necessary changes from last step when connections are found across different levels.

Then we have that
\[d(\lambda)\leq \sum_{i\in \mathbb{N}}(i+1)(\frac{1}{(i+1)2^{(i+1)}})=1\] 
so $F(O)=d$ is an $O$-supermartingale.
 Now suppose that $S\in X$ and let $n$ be such that $A_i$ contains a prefix of $S$ for all $i\geq n$. Let $r>0$ be such that $d(S[0\dots f_O(n)])=r$. Then by construction, for all $k\geq f_O(n+i)$ we have 
\[d(S[0\dots k])\geq \frac{2^ir}{n+i+1}\] 
and hence 
\[\liminf_{n\to \infty}d^O(S[0\dots n])=\infty.\]
\end{proof}

\begin{lemma}\label{trees}
    If a sequence $S$ has algorithmic strong packing dimension zero then there is a fixed constant $c\in \mathbb{N}$ and a Turing Machine $M$ such that $M$ halts on at most $c$ inputs of length $n$, one of which is $S[0\dots n]$, for all $n\in \mathbb{N}$.
    \end{lemma}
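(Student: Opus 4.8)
The plan is to route through the frequent-covering characterization just established and read off a bounded-width enumeration of the prefixes of $S$. First I would apply the covering functional witnessing that $\{S\}$ is algorithmically frequently coverable (the preceding lemma) to a single, very sparse computable oracle $A=\{a_0<a_1<\cdots\}$, obtaining a partial computable covering $f=f^A$ with $|f^A(n)|=n$ whenever it halts and with the block-frequency property of Definition \ref{SPDZD}: for all but finitely many blocks $A_i$ there is some $a\in A_i$ with $f^A(a)\sqsubseteq S$. The machine $M$ I build will, on input $w$, search the covering values $f^A(a)$ for $a$ ranging over the two blocks $A_i,A_{i+1}$, where $A_i$ is the least block of $A$ containing an element $\ge|w|$, and halt exactly when it finds such an $a\ge|w|$ with $w\sqsubseteq f^A(a)$. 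This set of halting inputs is clearly c.e.\ (the halting condition is semidecidable), which gives the required ``halts'' structure for free.

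For the capture half I would argue as follows. Fix a length $m$ past the finitely many uncovered initial blocks, and let $a^*$ be a covered element of the first block whose elements reach or exceed $m$ (such a covered element exists in that block or the following one by the frequency property). Then $f^A(a^*)=S[0\dots a^*-1]$ is a prefix of $S$ of length $\ge m$, so $S[0\dots m-1]\sqsubseteq f^A(a^*)$ and $M$ halts on $S[0\dots m-1]$. The finitely many small lengths coming from the uncovered initial blocks are handled by hard-coding finitely many prefixes of $S$ into $M$, which changes the final bound only by an additive constant.

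The genuine difficulty is the constant width. The search window above has size on the order of the block length, so on its own it only yields a slowly growing bound; producing a fixed constant $c$ is the crux. To force a constant I would count through the associated $O$-supermartingale rather than the raw covering: taking the odds function $O$ equal to $2$ precisely at the covered positions and $1$ elsewhere, Lemma \ref{OddsSMtoSM} together with the Kraft-type identity $\sum_{|w|=n}\mu_O(w)\,d(w)\le d(\lambda)\le 1$ caps, at each level, how many strings the covering can profitably mark while keeping the supermartingale solvent, and $\liminf_{n\to\infty} d(S[0\dots n])=\infty$ keeps $S[0\dots n]$ permanently among the marked strings. The delicate point is reconciling a fixed cap with the fact that the number of covered lengths below $n$ grows without bound; I would handle this by letting $M$ commit to a fresh candidate only at the covered positions and propagate a frozen frontier in between, using that on the stretches where $O\equiv1$ the mass $\sum_{|w|=n}\mu_O(w)\,d(w)$ is nonincreasing, so the live frontier can only be reset by a bounded amount at each covered position. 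I expect this propagation-and-reset bookkeeping, establishing that the live frontier never exceeds one and the same constant $c$ at every length simultaneously, to be the main obstacle; the capture argument and the halting (c.e.) description of $M$ are then routine.
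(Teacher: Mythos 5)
Your setup (running the covering functional on one fixed sparse computable oracle $A$, building $M$ to search a window of covering values, and handling the finitely many bad initial blocks by hard-coding) is fine as far as it goes, and you correctly locate the crux: the window at block $A_i$ has size $i+1$, so this only gives a bound growing with $n$, not a fixed $c$. The problem is that the repair you propose cannot close that gap. For any \emph{single} acceptable odds function $O$, the mass bound you invoke gives that the number of length-$n$ strings $w$ with $d(w)\geq t$ is at most $d(\lambda)/(t\,\mu_O(n))$, and acceptability forces $\mu_O(n)\to 0$; with your choice ($O=2$ at covered lengths, $1$ elsewhere) this bound is $2^{k(n)}/t$ with $k(n)\to\infty$. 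The condition $\liminf_n d(S[0\dots n])=\infty$ carries no rate information, so you cannot raise the threshold $t$ fast enough to compensate: a supermartingale can legitimately keep on the order of $2^{k(n)}$ strings above any fixed threshold at every level while still diverging (slowly) along $S$. So no amount of ``propagation-and-reset bookkeeping'' on a single run of the functional yields a length-independent constant; the obstacle you flag as the main difficulty is in fact fatal to the direct, single-oracle approach.

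The paper avoids this by proving the contrapositive with a diagonalization in which the oracle is built \emph{adversarially against} the given covering functional $N$: at each stage it places the next block of $A$ far out and dovetails $N$ over all finite oracle extensions, trying to reach a configuration in which none of the (at most $n_s$) outputs is a prefix of $S$. If every stage succeeds, $N$ fails to frequently cover $\{S\}$; if some stage $s$ gets stuck, then the dovetailing procedure at that stage, run for each candidate starting point $m$, is itself a Turing machine enumerating at most $n_s$ strings per length with one always a prefix of $S$ --- and that $n_s$ is the fixed constant $c$. The constant is fixed because the failure happens at a single finite stage, not because of any uniform counting bound on one supermartingale. The missing idea in your proposal is precisely this: you must exploit that the functional is required to work for \emph{all} oracles $A$ (equivalently, all acceptable odds functions), choosing $A$ in response to the functional's behavior, rather than extracting $M$ from one fixed instantiation.
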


\begin{proof}
    We will prove the contrapositive. Assume no such $M$ exists and let $N$ be any oracle Turing machine. We will construct an $A\subseteq \mathbb{N}$ such that $N^A$ does not produce a frequent cover. 

   We do this in stages. At stage $s$, let $A^{s-1}$ be the finite set of naturals in $A$ after stage $s-1$, initially empty, and let $m_{s-1}=\max\{A^{s-1}\}$. Let $n_{s}$ be the amount of naturals needed to finish the next $A_i$ along with $A_{i-1}$ if necessary as defined in the third condition of Definition \ref{SPDZD}. Now let $m\in \mathbb{N}>m_{s-1}$ be such that the last condition in the following construction holds:
    \begin{itemize}
        \item Let $A'=A^{s-1}\cup\{m, m+1, \dots m+n_s-1\}$.
        \item Dovetail $N$ on inputs $m,m+1, \dots, m+n_s-1$ and all finite oracles $A$ that extend $A'$.
        \item Whenever $N$ halts on one of these inputs update $A'$ to be this new finite oracle that caused $N$ to halt and restart dovetailing on the rest.
        \item After $N$ has halted on all inputs or $A'$ has been defined to where no extension of $A'$ will cause any of the remaining inputs to halt, none of the outputs are a prefix of $S$. 
    \end{itemize}
    Then define $A_{s}$ to be the final $A'$ above and go to the next stage. 
    
    It is clear $A_i$ will not contain a prefix of $S$ so it suffices to show that there is such an $m$ at each stage. If this was not possible at some stage $s$, then using the finite $A^{s-1}$ and the machine $N$, it is possible to create a new Turing machine $M$ that performs the above dovetailing for each $m$ and creates a computably enumerable set of size at most $n_s=c$ strings of length m with one being a prefix of $S$, contradicting our assumption. 
\end{proof}

\begin{corollary}
        If a sequence $S$ has algorithmic or computable strong packing dimension zero then $S$ is decidable.
\end{corollary}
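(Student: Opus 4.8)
The plan is to chain together two facts: the combinatorial structure guaranteed by Lemma~\ref{trees} for the algorithmic case, and the fact that computable strong packing dimension zero is a stronger condition than algorithmic strong packing dimension zero (by the general implication computable-$\mathrm{SPDZ}\implies$ algorithmic-$\mathrm{SPDZ}$, analogous to the implications in the introductory diagram). Thus it suffices to handle the algorithmic case: I would show that a sequence $S$ satisfying the conclusion of Lemma~\ref{trees} is decidable. Given the constant $c$ and the Turing machine $M$ from that lemma, at each length $n$ the machine $M$ halts on at most $c$ inputs of length $n$, and one of those inputs is exactly the prefix $S[0\dots n]$. So $S$ must be one of the (at most $c$) sequences that arise as "limits" of these halting strings.

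First I would fix $M$ and the constant $c$. For each $n$, let $H_n\subseteq\{0,1\}^{n+1}$ be the set of strings of length $n+1$ on which $M$ halts; by hypothesis $|H_n|\leq c$ and $S[0\dots n]\in H_n$ for every $n$. The key observation is that $S[0\dots n]$ is always a prefix of $S[0\dots n+1]$, so the prefixes of $S$ form an infinite path through the tree generated by allowing, at each level, only strings appearing in the $H_n$'s. The main idea is to bound the number of infinite paths: since each level has at most $c$ halting strings and the true prefixes of $S$ lie along one path, there can be at most $c$ distinct infinite sequences consistent with all the $H_n$. This gives finitely many candidate sequences, one of which is $S$.

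To turn "finitely many candidates" into decidability I would argue as follows. Consider the set $P$ of all infinite sequences $T$ such that $T[0\dots n]\in H_n$ for all $n$; this is a closed set with at most $c$ elements, and $S\in P$. The trouble is that $P$ need not be computably enumerable as a set of halting prefixes in a uniform way, because $M$'s halting is only semidecidable. However, the finiteness of $P$ lets me separate the elements: since the (at most $c$) distinct sequences in $P$ are pairwise distinct, there is some level $N$ past which all of them have pairwise distinct prefixes. From level $N$ onward, the prefix $S[0\dots n]$ is the unique prefix in $H_n$ that extends $S[0\dots N]$, and once I know the finite initial segment $S[0\dots N]$, I can decide membership in $S$ by searching, for each query string $s_j$, for the unique halting string of the appropriate length extending $S[0\dots N]$ and reading off the relevant bit.

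The main obstacle is making the above uniform and genuinely decidable rather than merely $\Delta^0_2$: I must pin down the correct finite initial segment $S[0\dots N]$ with only finitely much non-uniform information. I would handle this by noting that $S[0\dots N]$ and the separation level $N$ are finite objects that can be hard-coded into the decision procedure (decidability permits finite advice). Given this finite advice, to decide whether $s_j\in A$ where $S=\chi_A$, I run $M$ by dovetailing on all strings of the relevant length that extend $S[0\dots N]$; by hypothesis $M$ halts on the true prefix $S[0\dots |s_j|]$, and past level $N$ this is the \emph{only} halting extension of $S[0\dots N]$ of that length lying in $P$, so the dovetailing search terminates and identifies it. Reading the appropriate bit of that string decides membership, so $S$ is decidable.
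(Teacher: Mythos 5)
Your reduction to Lemma~\ref{trees} (and the observation that the computable case follows from the algorithmic one) matches the paper's strategy, but the final step of your argument has a genuine gap. The problem is the claim that, past the separation level $N$, the true prefix $S[0\dots n]$ is ``the unique halting string of the appropriate length extending $S[0\dots N]$,'' so that a dovetailed search ``terminates and identifies it.'' Lemma~\ref{trees} only guarantees that $H_n$ has at most $c$ elements, one of which is $S[0\dots n]$; nothing prevents several of the other elements of $H_n$ from also extending $S[0\dots N]$ while failing to be prefixes of any element of $P$ (dead branches that keep reappearing at every level). Uniqueness holds only after you add the qualifier ``and is a prefix of an element of $P$,'' but that property of a finite string is $\Pi^0_2$ (it asserts extendibility through the $H$-structure at all higher levels), not semi-decidable, so your search cannot verify it: it may halt on a spurious extension before, or instead of, the true one, and waiting to confirm uniqueness never terminates. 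This is exactly the difficulty that makes Loveland's theorem nontrivial, and patching it requires a more delicate counting argument (e.g.\ tracking the $\limsup$ of the enumeration index of $S[0\dots n]$ within $H_n$), not just the separation of the finitely many paths of $P$.

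For comparison, the paper's proof sidesteps the issue entirely: from Lemma~\ref{trees} it reads off the bound $K(S[0\dots n]\mid n)\leq c'$ for a fixed constant (describe $S[0\dots n]$ by its index among the at most $c$ halting strings of length $n$ in order of enumeration) and then invokes the standard theorem that sequences of bounded conditional complexity are decidable, citing \cite{LiVit19}. If you want a self-contained argument you must either reprove that theorem or repair the identification step as above; as written, your decision procedure is not correct.
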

\begin{proof}
    By Lemma \ref{trees}, any $S$ with algorithmic strong packing dimension zero has $\mathrm{K}(S[0\dots n]|n)\leq c$ for some $c$ and all $n$. Therefore $S$ is decidable, see for instance \cite{LiVit19}. 
\end{proof}

However, this is not enough to say every computable strong packing dimension zero set is computably countable. For example, the set DEC of all decidable languages is not computably countable. We will see that computable strong packing dimension zero sets coincide with the following notion.

\begin{definition}
    A \emph{weak constructor} is a function $\delta:\mathbb{N}\rightarrow ({\{0,1\}^*})^b$ for some $b\in\mathbb{N}$ satisfying
    \[w\in \delta(n) \Rightarrow \exists u \in \delta(n-1) \text{ with } u\sqsubset w \]
    for all $n\in \mathbb{N}^+$.
    The \emph{result} of a weak constructor is the set 
    \[\{S\in \mathbf{C} \mid \forall n \in \mathbb{N} \enspace  \exists w \in \delta(n) \enspace w\sqsubseteq S\}.\]
\end{definition}

It is easy to see that every normal constructor coincides with a weak constructor with $b=1$ and that a language is decidable if and only if has a computable weak constructor. One can view a weak constructor as a growing tree with at most $b$ ``alive'' branches at any stage. The extra power comes from not having to decide which branch to follow in a computable amount of time when looking at effective unions of constructors.

\begin{definition}
    A set $X\subseteq \mathbf{C}$ is weakly $\mathrm{\Delta}$-countable if there exists a function $\delta:\mathbb{N} \times \mathbb{N}\rightarrow \mathcal{P}\{0,1\}^*$ meeting the following properties.
    \begin{enumerate}
        \item $\delta\in \mathrm{\Delta}$.
        \item For each $k\in \mathbb{N}$, if we write $\delta_k(n)=\delta(k,n)$, then the function $\delta_k$ is a weak constructor.
        \item $X\subseteq \bigcup_{k\in \mathbb{N}} R(\delta_k)$.
    \end{enumerate}
\end{definition}

\begin{example}
    Consider an enumeration $M_0,M_1, M_2,\dots$ of all Turing machines and for each $M_i$ let 
    \[H_i=\{n\in \mathbb{N} \mid M_i(n) \text{ halts within } n^2 \text{ steps}\}.\]
    Now remove every other element of $H_i$ (the odd indices) to get the set \\${H'_i=\{n_0,n_1,n_2,\dots\}}$ and create a language 
    \[L_i=\cup_i\{n_i,n_i+1 \mid n_{i+1} \text{ is odd}\} \bigcup\cup_i\{n_i\mid n_{i+1} \text{ is odd}\}.\]
    Finally, let
    \[X=\cup_{i\in \mathbb{N}}L_i.\]
    Then $X$ is weakly computably countable since it can guess at each step in $L_i$ that there are no more inputs on $M_i$ that halt in $n^2$ steps, that the next is even, or that it is odd. However, for a normal constructor to succeed in a similar way it would have to know if there is another input that halts in $n^2$ steps to avoid running forever without producing output.
\end{example}
Now note that Lemma \ref{trees} can be extended to computability as follows.
\begin{lemma}\label{comptrees}
    If a set $X$ has computable strong packing dimension zero then there is a total Turing Machine $M$ with the following property. For each $S\in X$, there is a string $w_{S}$ and  fixed constant $c_S\in \mathbb{N}$ such that $M(w_S,x)$ halts on at most $c_S$ strings $x$ of length $n$, one of which is $S[0\dots n]$, for all $n\in \mathbb{N}$.
\end{lemma}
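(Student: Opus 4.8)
The plan is to reduce to the covering characterization and then adapt the diagonalization in the proof of Lemma \ref{trees} from the algorithmic to the computable setting. First I would invoke the preceding lemma characterizing computable strong packing dimension zero by computable frequent coverability, obtaining a total computable oracle machine $N$ such that $N^A$ is a frequent $A$-covering of $X$ for every infinite $A\subseteq\mathbb{N}$. The crucial new feature, compared with Lemma \ref{trees}, is that $N^A$ now halts on every input, which is what will let me produce a decidable (rather than merely computably enumerable) thin tree and hence a total machine $M$; the second new feature is that $M$ must be a single machine serving every $S\in X$ simultaneously, and this uniformity will be supplied by the advice string $w_S$.

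For each fixed $S\in X$ I would run the stagewise construction of Lemma \ref{trees} relative to $N$ and $S$, building a candidate oracle $A_S$ block by block as in Definition \ref{SPDZD}, where at each stage one searches for a length parameter $m$ so that the next block can be extended without ever placing a prefix of $S$ among the cover's outputs. If this search succeeded at every stage, the resulting $A_S$ would be infinite and $N^{A_S}$ would fail to frequently cover $S$, contradicting that $N^A$ is a frequent cover for all infinite $A$. Hence for every $S\in X$ the construction must break down at some stage $s(S)$; as in Lemma \ref{trees}, this breakdown means that for all sufficiently large lengths the cover is forced to output a prefix of $S$, and collecting these forced outputs produces a tree containing $S$ whose width is bounded by the block size $c_S=n_{s(S)}$. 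Because $N$ is total, membership in this tree is settled by a terminating computation, namely evaluating $N^{A'}$ on the finitely many block inputs over the relevant finite oracle extensions of the finite data present at stage $s(S)$, so the tree is decidable. I would then define the single total machine $M$ so that, on advice $w$ encoding the finite oracle data and block parameters frozen at a stage, $M(w,\cdot)$ recomputes exactly this forced-output tree; taking $w_S$ to encode the data at stage $s(S)$ yields the required $w_S$ and $c_S$ for each $S\in X$.

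The main obstacle is twofold. First, I must confirm that the width of the forced-output tree is genuinely bounded by a constant depending only on $S$: this rests on the supermartingale (odds) constraint underlying the frequent cover, which limits how many strings the cover can designate within a single block, exactly as in the $n+1$ bound appearing in the frequent coverability lemma, together with the observation that a total $N$ has finite use on each input. Second, and more delicately, I must verify that a finite advice string $w_S$ really suffices to uniformize the construction into one total machine $M$ independent of $S$, so that the advice pins down the finite stage data and lets $M$ regenerate the thin tree for $S$ without further knowledge of $S$. The passage from the merely computably enumerable thin tree of Lemma \ref{trees} to a decidable one, powered by the totality of $N$, is the feature that distinguishes the computable case and is what ultimately connects this lemma to weak computable countability.
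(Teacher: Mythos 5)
Your proposal is correct and follows essentially the same route as the paper, which proves this lemma by rerunning the diagonalization of Lemma \ref{trees} against the total oracle machine from the frequent-coverability characterization, observing that the construction must break down at some stage for each $S\in X$, and encoding the finite stage data in the advice string $w_S$. The two features you single out --- totality of the covering machine yielding a total $M$, and the advice string supplying uniformity over $S$ --- are exactly the points the paper's proof sketch relies on.
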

\begin{proof}
    The proof is the same as Lemma \ref{trees} except we only care about total Turing machines and pick an $S$ where an $m$ can be chosen at each stage. If that were not possible, then there would be a computable $M$ that works for each $S$ using a finite amount of information encoded in a string $w_S$ along with the total Turing machine $N$ defined in the proof. 
\end{proof}
\begin{theorem}
    A set $X$ has computable strong packing dimension zero if and only if it is weakly computably countable.
\end{theorem}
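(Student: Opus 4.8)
The plan is to prove both implications by routing through the two characterizations already established: the equivalence of computable strong packing dimension zero with computable frequent coverability (for the easy direction) and Lemma~\ref{comptrees} (for the hard direction).

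For the direction ``weakly computably countable $\Rightarrow$ computable strong packing dimension zero'' I would show that $X$ is computably frequently coverable and then invoke the frequent-coverings lemma. Let $\delta$ witness weak countability, so each $\delta_k$ is a weak constructor with at most $b_k$ alive branches at every stage and $X\subseteq\bigcup_k R(\delta_k)$. Given an infinite $A$ with blocks $A_0,A_1,\dots$ where $|A_i|=i+1$, I would use the $i+1$ lengths available in block $A_i$ to store truncations of the current branches of the first $K_i$ constructors, where $K_i$ is chosen as large as the slot budget allows; since each $b_k$ is finite, $K_i\to\infty$. Concretely, for the slots assigned to $\delta_k$ I would search for a stage $m$ at which every branch of $\delta_k(m)$ is at least as long as the largest assigned length (possible because the minimum branch length of a weak constructor increases by at least one per stage, hence tends to infinity), and output each branch truncated to its slot length. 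For any $S\in X$ we have $S\in R(\delta_{k_0})$ for some $k_0$, so for every block with $K_i\ge k_0$ one of the stored branches is a prefix of $S$ and its truncation covers $S$; thus only finitely many blocks miss $S$, which is exactly frequent coverability.

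For the converse I would start from Lemma~\ref{comptrees}, which supplies a total machine $M$ so that each $S\in X$ is pinned down by a pair $(w_S,c_S)$: the strings $x$ accepted by $M(w_S,\cdot)$ number at most $c_S$ per length and always include $S[0\dots n]$. The task is to convert these accepting sets---which carry no tree structure and whose cardinality must be capped---into genuine weak constructors without ever discarding a prefix of $S$. The device I would use is to define, for each pair $(w,c)$, the constructor whose value at $n$ is the set of length-$n$ strings all of whose prefixes are accepted by $M(w,\cdot)$. This automatically satisfies the strict-extension law, lies inside the per-length accepting set (hence has at most $c_S$ branches for the correct pair), and still contains every $S[0\dots n]$ because every prefix of $S$ is accepted. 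To keep the arity bounded for \emph{every} pair I would cap: at the first stage where more than $c$ strings appear, freeze the surviving branches by extending them with zeros, which leaves a valid weak constructor and is harmless for $(w_S,c_S)$, where the cap is never triggered. Enumerating all pairs $(w,c)$ then gives a computable family $\delta_k$ of weak constructors with $X\subseteq\bigcup_k R(\delta_k)$, so $X$ is weakly computably countable.

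I expect the main obstacle to be this last conversion: the accepting sets from Lemma~\ref{comptrees} have no built-in tree structure and only a bound on the designated column, so the subtlety is enforcing both the weak-constructor growth law and a uniform arity bound while provably retaining $S[0\dots n]$ at every level. The ``all prefixes accepted'' definition together with the freeze-on-overflow cap is what makes these two requirements compatible, and verifying that the resulting $\delta$ is computable (using totality of $M$) and that the correct pair $(w_S,c_S)$ is never frozen is the crux of the argument.
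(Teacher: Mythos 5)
Your proof is correct, and one half of it takes a genuinely different route from the paper. For the direction ``weakly computably countable $\Rightarrow$ computable strong packing dimension zero,'' the paper does not go through frequent coverability: it builds the odds functional directly, defining for each pair $(k,c)$ an $O$-supermartingale $d_{k,c}$ with seed capital $2^{-k-c}$ that splits its money $c$ ways among the branches of $\delta_k$ at stages spaced so the odds product grows by a factor of $2c$, and summing over all pairs; the enumeration over guessed widths $c$ is how the paper sidesteps the fact that the true arity $b_k$ of $\delta_k$ need not be computable from $k$. Your block-allocation argument reaches the same conclusion via the frequent-covering characterization, and it avoids guessing widths by counting $|\delta_k(m)|$ at the actual queried stage --- just make explicit that the greedy slot assignment is done in that order (find the stage, then count, then allocate), and that a fixed $\delta_k$ is served in every block with $i+1>\sum_{j\le k}b_j$. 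Your observation that the minimum branch length increases by at least one per stage is exactly what justifies the truncation step. For the converse your argument is essentially the paper's (both hinge on Lemma \ref{comptrees} and enumerate pairs $(w,c)$), but your version is actually more careful on one point: the paper takes $\delta_k(n)$ to be \emph{all} length-$n$ strings accepted by $M(w,\cdot)$, which need not satisfy the weak-constructor extension law, whereas your ``all prefixes accepted'' definition enforces the tree structure automatically while still retaining every $S[0\dots n]$; just freeze at the last stage \emph{before} the cap is exceeded so the arity bound $c$ is never violated. Both the paper's overflow handling and yours are harmless for the correct pair $(w_S,c_S)$, which is all that matters.
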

\begin{proof}
    First, suppose $X$ has computable strong packing dimension zero. Then let $M$ be a total Turing machine as in Lemma \ref{comptrees}. Let $\delta:\mathbb{N} \times \mathbb{N}\rightarrow \mathcal{P}\{0,1\}^*$  be defined as follows. If $k=(w,c)$ for some $w\in\{0,1\}^*$ and $c\in\mathbb{N}$, then let $\delta_k(n)$ output the set of strings $x$ of length $n$ where $M(w,x)$ halts. If this is ever more than $c$ or if $k\neq(w,c)$ for some $(w,c)$ then let  $\delta_k(n)=\{0^n\}$ from then on. Then each $\delta_k$ is a weak constructor. For all $S\in X$, when $k=(w_S,c_S)$ we have $S\in R(\delta_k)$ so $X$ is weakly computably countable.

    For the other direction let $X$ be weakly computably countable via $\delta$. Then an odds functional $F$ can be created as follows given odds function $O$. Let $d_{k,c}$ be a $O$-supermartingale with $d_{k,c}(\lambda)=2^{k-c}$. Then the values of $d_{k,c}$ are generated using the following computable recursive algorithm starting with $w=\lambda$.

\begin{algorithm}[H]
\caption{$d_{k,c}$}
\begin{algorithmic}[1]
\STATE On input $w$:
\STATE Compute $n\in \mathbb{N}$ where $\prod_{i=0}^{|x|-1}O(x[0\dots i])\geq 2c\prod_{i=0}^{|w|-1}O(w[0\dots i])$ for all $x\in \delta_k(n)$. 
\STATE If $|\delta_k(n)|>c$ stop/let all undefined $d_{k,c}(x)=0$. 
\STATE If $\delta_k(n)$ is not prefix free remove any extensions of elements in $\delta_k(n)$ to make it prefix free.
\STATE For each $x \in \delta_k(n)$ with $w\sqsubset  x$, let ${d_{k,c}(x[0\dots i])=\frac{1}{c}d_{k,c}(w)\prod_{j=|w|}^{i} O(x[0\dots j])}$ for all $|w|\leq i<|x|$.
\STATE Start step 1 with $w=x$ for each such $x$.
\end{algorithmic}
\end{algorithm}
Then once all values of $\delta_k(n)$ are greater than the length of a string $w$, the value of $d_{k,c}(w)$ is fixed and computable.
    
Moreover, for the constant maximal width $b$ of the weak constructor $\delta_k$, we have that 
    \[\liminf_{n\to \infty}d_{k,b}(S[0\dots n])=\infty\] 
    for all $S\in R(\delta_k)$. Lastly, 
    letting \[F^O=\sum_{k,c\in \mathbb{N}} d_{k,c}\] 
    we have $F^O(\lambda)\leq \sum_{k\in \mathbb{N}}\sum_{c\in\mathbb{N}} 2^{-k-c}=4$. 
\end{proof}

\begin{remark}
    Franklin et al. \cite{franklin2013anti} defined a sequence $S$ to be \emph{anti-complex} if for every computable order (non-decreasing and unbounded) $f:\mathbb{N}\to \mathbb{N}$, it is the case that $\mathrm{K}(S[0\dots f(n)])\leq n $ for almost every $n$. Hölzl and Porter \cite{holzl2017randomness} showed this is equivalent to $\mathrm{KM}(S[0\dots n])\leq f(n)$ for almost every $n$. In a version of our strong algorithmic packing dimension zero that is changed to only require success with respect to computable odds functions (or equivalently computable prediction orders or computable gauge functions), it is routine to see that this gives an equivalent characterization of anti-complex sequences by an analogue of Lemma \ref{algsmz}. Similarly, a version of algorithmic strong measure zero only requiring success on computable objects corresponds to  \emph{i.o. anti-complex} \cite{holzl2017randomness} sequences where the above characterizations of anti-complex only need to hold for infinitely many $n$.
\end{remark}

\section*{Acknowledgments}
The author thanks Jack Lutz for many helpful conversations.

\bibliography{EVSMZ}

@article{lutz2003dimensions,
  title={The dimensions of indvidual strings and sequences},
  author={Lutz, Jack H.},
  journal={Information and Computation},
  volume={187},
  number={1},
  pages={49--79},
  year={2003},
  publisher={Elsevier}
}

@book{LiVit19,
author = {Li, Ming and Vitányi, Paul M.B.},
title = {An Introduction to Kolmogorov Complexity and Its Applications},
year = {2019},
isbn = {3030112977},
publisher = {Springer Publishing Company, Incorporated},
edition = {4}
}

@article{zvonkin1970complexity,
  title={The complexity of finite objects and the development of the concepts of information and randomness by means of the theory of algorithms},
  author={Zvonkin, Alexander K. and Levin, Leonid A.},
  journal={Russian Mathematical Surveys},
  volume={25},
  number={6},
  pages={83},
  year={1970},
  publisher={IOP Publishing}
}

@book{downey2010algorithmic,
  title={Algorithmic Randomness and Complexity},
  author={Downey, Rodney G. and Hirschfeldt, Denis R.},
  year={2010},
  publisher={Springer Science \& Business Media}
}

@article{AHLM07,
  title={Effective strong dimension in algorithmic information and computational complexity},
  author={Athreya, Krishna B. and Hitchcock, John M. and Lutz, Jack H. and Mayordomo, Elvira},
  journal={SIAM journal on computing},
  volume={37},
  number={3},
  pages={671--705},
  year={2007},
  publisher={SIAM}
}

@article{lutz03a,
  title={Dimension in complexity classes},
  author={Lutz, Jack H},
  journal={SIAM Journal on Computing},
  volume={32},
  number={5},
  pages={1236--1259},
  year={2003},
  publisher={SIAM}
}

@book{falc14,
  title={Fractal geometry: mathematical foundations and applications},
  author={Falconer, Kenneth},
  year={2014},
  publisher={Wiley}
}

@article{haus19,
  title={Dimension und {\"a}u{\ss}eres Ma{\ss}},
  author={Hausdorff, F},
  journal={Mathematische Annalen},
  volume={79},
  pages={157--179},
  year={1919}
}

@article{DBLP:journals/mst/Schnorr71,
  author    = {Claus{-}Peter Schnorr},
  title     = {A Unified Approach to the Definition of Random Sequences},
  journal   = {Mathematical Systems Theory},
  volume    = {5},
  number    = {3},
  pages     = {246--258},
  year      = {1971},
  url       = {https://doi.org/10.1007/BF01694181},
  doi       = {10.1007/BF01694181},
  timestamp = {Wed, 14 Nov 2018 10:26:33 +0100},
  biburl    = {https://dblp.org/rec/bib/journals/mst/Schnorr71},
  bibsource = {dblp computer science bibliography, https://dblp.org}
}

@article{AEHNC, author = {Lutz, Jack H.}, title = {Almost everywhere high nonuniform complexity}, year = {1992}, issue_date = {April 1992}, publisher = {Academic Press, Inc.}, address = {USA}, volume = {44}, number = {2}, issn = {0022-0000}, url = {https://doi.org/10.1016/0022-0000(92)90020-J}, doi = {10.1016/0022-0000(92)90020-J}, journal = {J. Comput. Syst. Sci.}, month = apr, pages = {220–258}, numpages = {39} }

@article{Borel1919,
author = {Borel, E.},
journal = {Bulletin de la Société Mathématique de France},
language = {fre},
pages = {97-125},
publisher = {Société mathématique de France},
title = {Sur la classification des ensembles de mesure nulle},
url = {http://eudml.org/doc/86398},
volume = {47},
year = {1919},
}

@article{Sierpiński1928,
author = {Sierpiński, Wacław},
journal = {Fundamenta Mathematicae},
language = {fre},
number = {1},
pages = {302-303},
title = {Sur un ensemble non dénombrable, dont toute image continue est de mesure nulle},
url = {http://eudml.org/doc/211448},
volume = {11},
year = {1928},
}

@article{Laver1983-LAVOTC,
author = {Richard Laver},
title = {{On the consistency of Borel's conjecture}},
volume = {137},
journal = {Acta Mathematica},
number = {none},
publisher = {Institut Mittag-Leffler},
pages = {151 -- 169},
year = {1976},
doi = {10.1007/BF02392416},
URL = {https://doi.org/10.1007/BF02392416}
}

@article{Bartoszynski1999-BARSTO-3,
	author = {T. Bartoszy\'{n}ski and H. Judah},
	journal = {Studia Logica},
	number = {3},
	pages = {444--445},
	publisher = {Springer},
	title = {Set Theory: On the Structure of the Real Line},
	volume = {62},
	year = {1999}
}

@article{higuchi-kihara,
title = {On effectively closed sets of effective strong measure zero},
author = {Kojiro Higuchi and Takayuki Kihara},
journal = {Annals of Pure and Applied Logic},
month = {9},
number = {9},
pages = {1445--1469},
publisher = {ELSEVIER SCIENCE BV},
doi = {10.1016/j.apal.2014.04.013},
volume = {165},
year = {2014}
}

@article{hitchcock2003fractal,
  title={Fractal dimension and logarithmic loss unpredictability},
  author={Hitchcock, John M},
  journal={Theoretical Computer Science},
  volume={304},
  number={1-3},
  pages={431--441},
  year={2003},
  publisher={Elsevier}
}

@inproceedings{besicovitch1956definition,
  title={On the definition of tangents to sets of infinite linear measure},
  author={Besicovitch, AS},
  booktitle={Mathematical Proceedings of the Cambridge Philosophical Society},
  volume={52},
  pages={20--29},
  year={1956},
  organization={Cambridge University Press}
}

@article{reimann2015measures,
  title={Measures and their random reals},
  author={Reimann, Jan and Slaman, Theodore},
  journal={Transactions of the American Mathematical Society},
  volume={367},
  number={7},
  pages={5081--5097},
  year={2015}
}

@article{reimann2022effective,
  title={Effective randomness for continuous measures},
  author={Reimann, Jan and Slaman, Theodore},
  journal={Journal of the American Mathematical Society},
  volume={35},
  number={2},
  pages={467--512},
  year={2022}
}

@article{zindulka2012small,
  title={Small sets of reals through the prism of fractal dimensions},
  author={Zindulka, Ondrej},
  journal={arXiv preprint arXiv:1208.5521},
  year={2012}
}

@article{lutz1990category,
  title={Category and measure in complexity classes},
  author={Lutz, Jack H},
  journal={SIAM Journal on Computing},
  volume={19},
  number={6},
  pages={1100--1131},
  year={1990},
  publisher={SIAM}
}

@article{staiger2017exact,
  title={Exact constructive and computable dimensions},
  author={Staiger, Ludwig},
  journal={Theory of Computing Systems},
  volume={61},
  pages={1288--1314},
  year={2017},
  publisher={Springer}
}

@phdthesis{AntonioThesis,
    author ={Montalb{\'a}n, Antonio},
    title ={Beyond the arithmetic},
    school ={Cornell University},
    year ={2005} 
}

@article{cenzer1986members,
  title={Members of countable $\Pi$10 classes},
  author={Cenzer, Douglas and Clote, Peter and Smith, Rick L and Soare, Robert I and Wainer, Stanley S},
  journal={Annals of Pure and Applied Logic},
  volume={31},
  year={1986}
}

@article{hitchcock2003gales,
  title={Gales suffice for constructive dimension},
  author={Hitchcock, John M},
  journal={Information Processing Letters},
  volume={86},
  number={1},
  pages={9--12},
  year={2003},
  publisher={Elsevier}
}

@inproceedings{lutz1993quantitative,
  title={The quantitative structure of exponential time},
  author={Lutz, Jack H},
  booktitle={[1993] Proceedings of the Eigth Annual Structure in Complexity Theory Conference},
  pages={158--175},
  year={1993},
  organization={IEEE}
}

@article{lutz2023dimension,
  title={Dimension and the structure of complexity classes},
  author={Lutz, Jack H and Lutz, Neil and Mayordomo, Elvira},
  journal={Theory of Computing Systems},
  volume={67},
  number={3},
  pages={473--490},
  year={2023},
  publisher={Springer}
}

@article{hitchcock2005correspondence,
  title={Correspondence principles for effective dimensions},
  author={Hitchcock, John M},
  journal={Theory of Computing Systems},
  volume={38},
  number={5},
  pages={559--571},
  year={2005},
  publisher={Springer}
}

@book{rogers1998hausdorff,
  title={Hausdorff measures},
  author={Rogers, Claude Ambrose},
  year={1998},
  publisher={Cambridge University Press}
}

@article{reimann2008effectively,
  title={Effectively closed sets of measures and randomness},
  author={Reimann, Jan},
  journal={Annals of Pure and Applied logic},
  volume={156},
  number={1},
  pages={170--182},
  year={2008},
  publisher={Elsevier}
}

@phdthesis{li2020algorithmic,
    author ={Li, Mingyang} ,
    title ={Algorithmic randomness and complexity for continuous measures} ,
    school = {Pennsylvania State University},
    year ={2020} 
}

@inproceedings{kjos2006kolmogorov,
  title={Kolmogorov complexity and the recursion theorem},
  author={Kjos-Hanssen, Bj{\o}rn and Merkle, Wolfgang and Stephan, Frank},
  booktitle={STACS 2006: 23rd Annual Symposium on Theoretical Aspects of Computer Science, Marseille, France, February 23-25, 2006. Proceedings 23},
  pages={149--161},
  year={2006},
  organization={Springer}
}

@incollection{miller1984special,
  title={Special subsets of the real line},
  author={Miller, Arnold W},
  booktitle={Handbook of set-theoretic topology},
  pages={201--233},
  year={1984},
  publisher={Elsevier}
}

@article{lutz2005effective,
  title={Effective fractal dimensions},
  author={Lutz, Jack H},
  journal={Mathematical Logic Quarterly},
  volume={51},
  number={1},
  pages={62--72},
  year={2005},
  publisher={Wiley Online Library}
}

@article{franklin2013anti,
  title={Anti-complex sets and reducibilities with tiny use},
  author={Franklin, Johanna NY and Greenberg, Noam and Stephan, Frank and Wu, Guohua},
  journal={The Journal of Symbolic Logic},
  volume={78},
  number={4},
  pages={1307--1327},
  year={2013},
  publisher={Cambridge University Press}
}

@article{holzl2017randomness,
  title={Randomness for computable measures and initial segment complexity},
  author={H{\"o}lzl, Rupert and Porter, Christopher P},
  journal={Annals of Pure and Applied Logic},
  volume={168},
  number={4},
  pages={860--886},
  year={2017},
  publisher={Elsevier}
}
\end{document}